\newtheorem{theorem}{Theorem}[section]
\newtheorem{conjecture}[theorem]{Conjecture}
\newtheorem{proposition}[theorem]{Proposition}
\newtheorem{lemma}[theorem]{Lemma}
\newtheorem{corollary}[theorem]{Corollary}
\theoremstyle{definition}
\newtheorem{definition}[theorem]{Definition}
\theoremstyle{remark}
\newtheorem{remark}[theorem]{Remark}
\newcommand{\defs}{\stackrel{\mathrm{def}}{=}}
\newcommand{\defn}[1]{{\color{green!50!black}\emph{#1}}}
\newcommand{\nc}{N\!C}
\newcommand{\nn}{N\!N}
\newcommand{\pnc}{\mathcal{N\!C}}
\newcommand{\Symmetric}{\mathfrak{S}}
\newcommand{\Braid}{\mathfrak{B}}
\newcommand{\ello}{\ell_{1}}
\newcommand{\ellk}{\ell_{k}}
\newcommand{\leqo}{\leq_{1}}
\newcommand{\leqk}{\leq_{k}}
\newcommand{\cyc}{\mathrm{cyc}}
\newcommand{\id}{\mathrm{id}}
\newcommand{\ran}{\mathrm{Ran}}
\newcommand{\ZetaPol}{\mathcal{Z}}
\newcommand{\Krew}{\mathrm{Krew}}
\newcommand{\Fact}{\mathrm{Fact}}
\newcommand{\GS}{\Symmetric}
\newcommand{\GSo}{\Symmetric^{(1)}}
\newcommand{\Park}{\mathcal{P}}
\newcommand{\seq}{\mathbf{s}}
\newcommandx{\polygon}[7][7=1.25]{

  \node[circle,minimum size=#5cm] (#2) at  #1 {};

  \foreach \t in {1,...,#3} {
    \coordinate (#2\t) at ($#1+(-\t*360/#3+360/#3:#4)$);
  }
  \draw[thin,black,fill=white,opacity=0.3] #1 circle (#4);
  \setcounter{intege}{1}
  \pgfmathsetcounter{intege}{1}
  \foreach \object in {#6}{
    \ifthenelse{\not\equal{\object}{}}{
      \filldraw[black] ($#1+($(-\theintege*360/#3+360/#3:#4)$)$) circle(2pt);
    }{
    }
    \node[inner sep=0pt] at ($#1+($#7*(-\theintege*360/#3+360/#3:#4)$)$) {\small$\object$};
    \pgfmathsetcounter{intege}{\theintege+1}
    \setcounter{intege}{\theintege}
  }
}
\newcommandx{\polygonwhite}[7][7=1.25]{

  \node[circle,minimum size=#5cm] (#2) at  #1 {};

  \foreach \t in {1,...,#3} {
    \coordinate (#2\t) at ($#1+(-\t*360/#3+360/#3:#4)$);
  }
  \draw[thin,black,opacity=.3] #1 circle (#4);
  \setcounter{intege}{1}
  \pgfmathsetcounter{intege}{1}
  \foreach \object in {#6}{
    \ifodd\theintege{
      \filldraw[black] ($#1+($(-\theintege*360/#3+360/#3:#4)$)$) circle(2pt);
    }\else{
       \draw[black,fill=lightgray] ($#1+($(-\theintege*360/#3+360/#3:#4)$)$) circle(2pt);
    }\fi
    \node[inner sep=0pt] at ($#1+($#7*(-\theintege*360/#3+360/#3:#4)$)$) {\small$\object$};
    \pgfmathsetcounter{intege}{\theintege+1}
    \setcounter{intege}{\theintege}
  }
}
\newcommandx{\polygonwhitetwo}[7][7=1.25]{

  \node[circle,minimum size=#5cm] (#2) at  #1 {};

  \foreach \t in {1,...,#3} {
    \coordinate (#2\t) at ($#1+(-\t*360/#3+360/#3:#4)$);
  }
  \draw[thin,black,opacity=.3] #1 circle (#4);
  \setcounter{intege}{1}
  \pgfmathsetcounter{intege}{1}
  \foreach \object in {#6}{
    \ifodd\theintege{
    }\else{
       \draw[black,fill=lightgray] ($#1+($(-\theintege*360/#3+360/#3:#4)$)$) circle(2pt);
    }\fi
    \node[inner sep=0pt] at ($#1+($#7*(-\theintege*360/#3+360/#3:#4)$)$) {\small$\object$};
    \pgfmathsetcounter{intege}{\theintege+1}
    \setcounter{intege}{\theintege}
  }
}
\newcounter{intege}
\newcommand{\ncFour}[6]{
    \begin{tikzpicture}
      \polygon{(0,0)}{a}{4}{.5}{1}{$1$,$2$,$3$,$4$}[1.5];
        \begin{pgfonlayer}{background}
          \draw[fill=gray!50!white] (#1) to (#2);
          \draw[fill=gray!50!white] (#3) to (#4);
          \draw[fill=gray!50!white] (#5) to (#6);
        \end{pgfonlayer}
      \end{tikzpicture}
}
\newcommand{\polyFour}[4]{
  \begin{tikzpicture}
      \polygonwhite{(0,0)}{a}{8}{.5}{1}{$ $,$ $,$ $,$ $,$ $,$ $,$ $,$ $}[1.5];
        \begin{pgfonlayer}{background}
          \draw[fill=gray!50!white,thick] (#1) to (#2);
          \draw[fill=gray!50!white,thick] (#3) to (#4);
        \end{pgfonlayer}
      \end{tikzpicture}
}
\title[$k$-Indivisible Noncrossing Partitions]{$k$-Indivisible Noncrossing Partitions}
\date{}
\author{Henri M{\"u}hle}
\address{Technische Universit{\"a}t Dresden, Institut f{\"u}r Algebra, Zellescher Weg 12--14, 01069 Dresden, Germany.}
\email{henri.muehle@tu-dresden.de}
\author{Philippe Nadeau}
\address{Universit\'e de Lyon, CNRS , UMR5208, Institut Camille Jordan, F-69622 Villeurbanne, France}
\email{nadeau@math.univ-lyon1.fr}
\author{Nathan Williams}
\address{University of Texas at Dallas.}
\email{nathan.f.williams@gmail.com}
\keywords{noncrossing partition, Hurwitz action, parking function, Cambrian lattice, nonnesting partition}
\subjclass[2010]{06A07 (primary), and 05A10, 05E15, 20B35 (secondary)}
\dedicatory{Dedicated to Christian Krattenthaler on the occasion of his 60th birthday.}
\begin{document}

\begin{abstract}
  For a fixed integer $k$, we consider the set of noncrossing partitions, where both the block sizes and the difference between adjacent elements in a block is $1\bmod k$.  We show that these \emph{$k$-indivisible} noncrossing partitions can be recovered in the setting of subgroups of the symmetric group generated by $(k{+}1)$-cycles, and that the poset of $k$-indivisible noncrossing partitions under refinement order has many beautiful enumerative and structural properties.  We encounter $k$-parking functions and some special Cambrian lattices on the way, and show that a special class of lattice paths constitutes a nonnesting analogue.
\end{abstract}

\maketitle

\section{Introduction}

\subsection{Classical noncrossing partitions}\label{sec:classical} 
For an integer $n\geq 0$, a (classical) \defn{noncrossing partition} of the set $[n{+}1]\defs\{1,2,\ldots,n{+}1\}$ is a set partition whose blocks have pairwise disjoint convex hulls when drawn on a regular $(n{+}1)$-gon with vertices labeled clockwise by $[n{+}1]$ (\Cref{fig:nckrew} illustrates some examples).  

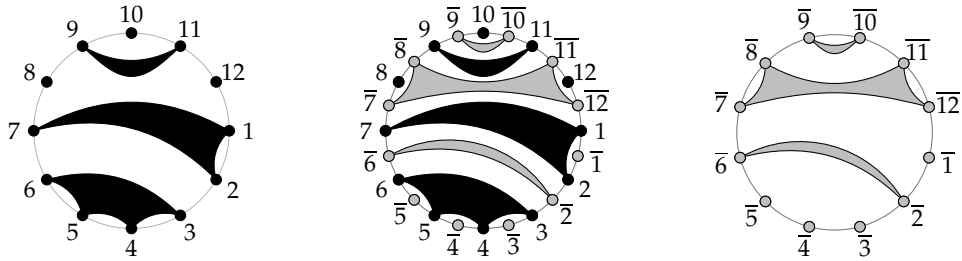
\begin{figure}[htb]
  \begin{tikzpicture}
\polygon{(0,0)}{a}{12}{1.3}{1.4}{$1$,$2$,$3$,$4$,$5$,$6$,$7$,$8$,$9$,$10$,$11$,$12$}[1.2];
      \draw[fill=black] (a1) to[bend right=30] (a2) to[bend right=30] (a7) to[bend left=30] (a1);
      \draw[fill=black] (a3) to[bend right=30] (a4) to[bend right=30] (a5) to[bend right=30] (a6) to[bend left=30] (a3);
      \draw[fill=black] (a9) to[bend right=30] (a11) to[bend left=45, looseness=1.5] (a9);
\end{tikzpicture}\hfill
  \begin{tikzpicture} 
\polygonwhite{(0,0)}{a}{24}{1.3}{1.4}{1,\overline{1},2,\overline{2},3,\overline{3},4,\overline{4},5,\overline{5},6,\overline{6},7,\overline{7},8,\overline{8},9,\overline{9},10,\overline{10},11,\overline{11},12,\overline{12}}[1.2];
      \draw[fill=black] (a1) to[bend right=30] (a3) to[bend right=30] (a13) to[bend left=30] (a1);
      \draw[fill=black] (a5) to[bend right=30] (a7) to[bend right=30] (a9) to[bend right=30] (a11) to[bend left=30] (a5);
      \draw[fill=black] (a17) to[bend right=30] (a21) to[bend left=45, looseness=1.5] (a17);
      \draw[fill=lightgray] (a4) to[bend right=30] (a12) to[bend left=45] (a4);
      \draw[fill=lightgray] (a14) to[bend right=30] (a16) to[bend right=30] (a22)  to[bend right=30] (a24)  to[bend right=15] (a14);
      \draw[fill=lightgray] (a18) to[bend right=30] (a20) to[bend left=45, looseness=1.5] (a18);
      \polygonwhite{(0,0)}{a}{24}{1.3}{1.4}{$ $,$ $,$ $,$ $,$ $,$ $,$ $,$ $,$ $,$ $,$ $,$ $,$ $,$ $,$ $,$ $,$ $,$ $,$ $,$ $,$ $,$ $,$ $,$ $}[1.2];
\end{tikzpicture}\hfill
  \begin{tikzpicture}
\polygonwhitetwo{(0,0)}{a}{24}{1.3}{1.4}{$ $,\overline{1},$ $,\overline{2},$ $,\overline{3},$ $,\overline{4},$ $,\overline{5},$ $,\overline{6},$ $,\overline{7},$ $,\overline{8},$ $,\overline{9},$ $,\overline{10},$ $,\overline{11},$ $,\overline{12}}[1.2];
      \draw[fill=lightgray] (a4) to[bend right=30] (a12) to[bend left=45] (a4);
      \draw[fill=lightgray] (a14) to[bend right=30] (a16) to[bend right=30] (a22)  to[bend right=30] (a24)  to[bend right=15] (a14);
      \draw[fill=lightgray] (a18) to[bend right=30] (a20) to[bend left=45, looseness=1.5] (a18);
      \polygonwhitetwo{(0,0)}{a}{24}{1.3}{1.4}{$ $,$ $,$ $,$ $,$ $,$ $,$ $,$ $,$ $,$ $,$ $,$ $,$ $,$ $,$ $,$ $,$ $,$ $,$ $,$ $,$ $,$ $,$ $,$ $}[1.2];
\end{tikzpicture}

\caption{The leftmost image represents the noncrossing partition $(1\;2\;7)(3\;4\;5\;6)(8)(9\;11)(10)(12)$.  The middle image then illustrates the  computation of its Kreweras complement $(1)(2\;6)(7\;8\;11\; 12)(3)(4)(5) (9\; 10)$, shown on the right.}
\label{fig:nckrew}
\end{figure}

Recall that the symmetric group $\Symmetric_{n{+}1}$ is generated by the set of transpositions $\bigl\{(i\;j)\bigr\}_{1 \leq i<j\leq n{+}1}$, and the noncrossing partitions $\nc_{n{+}1}$ are naturally identified (by sending blocks to cycles) with the elements occurring as prefixes of reduced factorizations of the long cycle $(1\;2\;\ldots\;n{+}1)$ into transpositions~\cite{biane97some}.  Noncrossing partitions lie at the intersection of many seemingly unrelated areas of mathematics---for more information, we refer to the surveys \cites{baumeister19non,mccammond06noncrossing,simion00noncrossing}.

\subsection{$k$-Indivisible noncrossing partitions} Fix integers $k,n \geq 1$.  Throughout this article we write 
\begin{displaymath}
  N\defs kn+1,
\end{displaymath}
and we denote by $\GS_{N;k}$ the subgroup of $\Symmetric_{N}$ generated by the set of all $(k{+}1)$-cycles.

\pagenumbering{arabic}
\addtocounter{page}{1}
\markboth{\SMALL HENRI M\"UHLE, PHILIPPE NADEAU, AND NATHAN WILLIAMS}{\SMALL $k$-INDIVISIBLE NONCROSSING PARTITIONS}

The previous construction of noncrossing partitions as prefixes of reduced factorizations of the long cycle into transpositions naturally generalizes to $\GS_{N;k}$ as the set $\nc_{N;k}$ of elements occurring as prefixes of reduced factorizations of the cycle $c_{N}\defs(1\;2\;\ldots\;N)$ into $(k{+}1)$-cycles.  In reference to Edelman and Armstrong's \defn{$k$-divisible noncrossing partitions} (noncrossing partitions whose block sizes are all divisible by $k$)~\cites{edelman80chain,armstrong09generalized}, we call the elements of $\nc_{N;k}$ the \defn{$k$-indivisible noncrossing partitions}.  Our first result characterizes $\nc_{N;k}$ as a condition on block sizes, explaining the nomenclature ``$k$-indivisible.''

Recall that the \defn{Kreweras complement} of a noncrossing partition $w\in\nc_{N}$ is defined as the coarsest noncrossing partition $\Krew(w)\in\nc_{N}$ that can be drawn on the dual $N$-gon without intersecting $w$ (see~\Cref{fig:nckrew} for an illustration).

\begin{theorem}\label{thm:k_indivisible_equivalent}
  Fix $k,n\geq 1$ and write $N=nk+1$.  The following are equivalent.
  \begin{enumerate}[\rm (i)]
    \item $w$ is a $k$-indivisible noncrossing partition on $[N]$. 
    \item $w$ is a noncrossing partition on $[N]$ and all cycles in both $w$ and its Kreweras complement $\Krew(w)$ have lengths $1\bmod{k}$.
    \item $w$ is a noncrossing partition on $[N]$, all its cycles have lengths $1\bmod{k}$, and if $i<j$ are consecutive in a cycle of $w$, then $j-i\equiv 1\pmod{k}$.
  \end{enumerate}
\end{theorem}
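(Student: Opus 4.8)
The plan is to prove the two equivalences (i)$\Leftrightarrow$(ii) and (ii)$\Leftrightarrow$(iii) separately, the first by a reflection-length argument and the second by a direct computation with the Kreweras complement. For (i)$\Leftrightarrow$(ii) I would first record that a single $(k{+}1)$-cycle has transposition length exactly $k$, so that $c_{N}$, having transposition length $N-1=nk$, factors into precisely $n$ many $(k{+}1)$-cycles in any reduced factorization; in particular a prefix of such a factorization is also a prefix of a reduced transposition factorization, hence a genuine noncrossing partition. Since a prefix of a reduced factorization is reduced and the matching suffix multiplies the prefix back to $c_{N}$, an element $w$ lies in $\nc_{N;k}$ if and only if both $w$ and $\Krew(w)=w^{-1}c_{N}$ are reduced products of $(k{+}1)$-cycles; the length additivity $\ell_{1}(w)+\ell_{1}(\Krew(w))=N-1$ guarantees that such a pair of reduced factorizations concatenates to a reduced factorization of $c_{N}$. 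It then remains to characterize which noncrossing partitions are reduced products of $(k{+}1)$-cycles.

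Here I would use two steps. The first is a parabolic/block decomposition: in a reduced factorization each factor lies below $w$ in absolute order, and a $(k{+}1)$-cycle that is below $w$ is supported inside a single block of $w$; grouping the (pairwise commuting) factors by block then shows that $w$ is a reduced $(k{+}1)$-product exactly when each of its cycles is. The second is a single-cycle lemma: a cycle of length $\ell$ is a reduced product of $(k{+}1)$-cycles if and only if $\ell\equiv 1\pmod{k}$. Necessity is the length count $\ell-1\equiv 0\pmod{k}$, and sufficiency follows from the explicit overlapping factorization $(p_{1}\cdots p_{k+1})(p_{k+1}\cdots p_{2k+1})\cdots$, which is reduced and even noncrossing. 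Combining the two steps yields that $w\in\nc_{N;k}$ if and only if all cycles of $w$ and of $\Krew(w)$ have length $1\bmod k$, which is exactly (ii).

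For (ii)$\Leftrightarrow$(iii) I would analyze the cycles of $\Krew(w)=w^{-1}c_{N}$ through the formula $\Krew(w)(i)=w^{-1}(i{+}1)$. Writing a block of $w$ as $\{b_{1}<\cdots<b_{m}\}$ with $w(b_{s})=b_{s+1}$, the step $i\mapsto\Krew(w)(i)$ either jumps over a whole block (when $i{+}1$ is a block-minimum, landing on that block's maximum) or backtracks to the predecessor of $i{+}1$ inside its block; correspondingly $\Krew(w)(i)-i$ is, up to the constant $+1$, equal to $b_{m}-b_{1}$ or to $-(b_{s+1}-b_{s})$, with a single wraparound correction at $i=N$ coming from $N\equiv 1\pmod{k}$. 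The conceptual heart is already visible for a single nontrivial block: its internal gaps $b_{s+1}-b_{s}$ together with its size $m$ being $\equiv 1\pmod{k}$ force the wraparound gap $\equiv 1\pmod{k}$ as well, because all gaps sum to $N\equiv 1$. This is precisely the trade between the cycle-length condition of (ii) and the gap condition of (iii).

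The main obstacle is the passage from these local (per-step, per-edge) congruences to the global statement that every individual cycle of $\Krew(w)$ has length $\equiv 1\pmod{k}$. The naive hope that the Kreweras cycle lengths simply record the gaps of $w$ fails once blocks are nested; for instance $w=(1\,4)(2\,3)$ on $[6]$ has complement $(1\,3)(2)(4\,5\,6)$, whose part sizes are not the gaps of $w$. The gap data of the various blocks must therefore be reassembled region by region in the planar picture, and I expect the cleanest route is an induction on the number of blocks (or on $n$), peeling off an interval block and tracking how $\Krew(w)$ and the two congruence conditions transform. Showing that each complementary region contributes a length $\equiv 1\pmod{k}$, rather than merely that the residues add up consistently over all regions, is the delicate point.
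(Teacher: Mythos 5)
Your treatment of (i) $\Leftrightarrow$ (ii) is correct: the observation that a reduced $(k{+}1)$-cycle factorization of $c_N$ expands to a reduced transposition factorization, the block decomposition of a reduced product, and the single-cycle lemma together re-derive what the paper packages as \Cref{lem:good_cycles_length} and \Cref{cor:lower_ideal}, so that half is essentially the paper's argument in different clothing. The genuine gap is in (ii) $\Leftrightarrow$ (iii), and you have in effect flagged it yourself: you never prove that the gap conditions in (iii) force every cycle of $\Krew(w)$ to have length $1\bmod{k}$ once blocks are nested --- you only state that you ``expect'' an induction peeling off interval blocks to succeed, and you correctly identify showing that \emph{each} complementary region contributes a length $\equiv 1\pmod{k}$ as the delicate point. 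An expectation is not a proof, and this direction is the hardest third of the theorem. Even your (ii) $\Rightarrow$ (iii) is only executed for a single nontrivial block; in general one needs the structural fact the paper isolates, namely that the number of maximal nested cycles of $w$ lying strictly between two consecutive entries $i<j$ of a cycle of $w$ is one less than the length of a cycle of $\Krew(w)$, which then feeds an induction on $j-i$.

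The paper's proof shows how to avoid the direction you are stuck on altogether: it proves the cycle of implications (i) $\Rightarrow$ (ii) $\Rightarrow$ (iii) $\Rightarrow$ (i), so the implication (iii) $\Rightarrow$ (ii) is never needed on its own --- it falls out by composing (iii) $\Rightarrow$ (i) $\Rightarrow$ (ii). The step (iii) $\Rightarrow$ (i) is constructive rather than complement-theoretic: if $w\neq c_N$ satisfies (iii), consider the increasing cycle $\zeta_0=(u_1<u_2<\cdots)$ containing $1$, pick a gap $u_{q+1}-u_q>1$ (or a virtual gap at the top if $\zeta_0$ is an initial interval), show that the number $d$ of maximal cycles of $w$ inside this gap satisfies $d\equiv 0\pmod{k}$ and $d\geq k$, and multiply $w$ by the $(k{+}1)$-cycle joining $\zeta_0$ to $k$ of these cycles. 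The resulting $w'$ still satisfies (iii) and covers $w$ in $\leqk$ by \Cref{cor:covering_relations}, so induction up to $c_N$ gives $w\leqk c_N$. Since you already have (i) $\Leftrightarrow$ (ii), replacing your stalled attempt at (iii) $\Rightarrow$ (ii) by such a direct proof of (iii) $\Rightarrow$ (i) is the natural way to complete your argument.
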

We prove~\Cref{thm:k_indivisible_equivalent} in~\Cref{sec:thm1}.  Note that the $k$-indivisible noncrossing partitions recover the ordinary noncrossing partitions when $k=1$ (so that the congruence constraint on the lengths of blocks is trivially satisfied), and the constructions of~\cite{muehle18poset} when $k=2$.

This combinatorial description allows us to enumerate $\nc_{N;k}$.

\begin{theorem}\label{rem:1modk_bijection}
  The cardinality of $\nc_{N;k}$ is
  \begin{displaymath}
     \frac{2}{N+1}\binom{N+n}{n}.
  \end{displaymath}
\end{theorem}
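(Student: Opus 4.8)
The plan is to use the combinatorial description from \Cref{thm:k_indivisible_equivalent}(iii) and to extract the count with a generating function and Lagrange inversion. For every $m\ge 0$ let $f(m)$ denote the number of noncrossing partitions of $[m]$ all of whose blocks have size $\equiv 1\pmod k$ and in which consecutive elements of a block differ by $\equiv 1\pmod k$; by \Cref{thm:k_indivisible_equivalent} we have $f(nk+1)=|\nc_{N;k}|$. Set $F(x)=\sum_{m\ge0}f(m)\,x^{m}$ and let $F_{0}(x)=\sum_{m\equiv 0\,(k)}f(m)\,x^{m}$ be the part of $F$ supported on exponents divisible by $k$.

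First I would set up a recursive decomposition. Given such a partition of $[m]$, I look at the block $B=\{1=a_{1}<\cdots<a_{s}\}$ containing $1$. The gap condition forces each $a_{j+1}-a_{j}\equiv 1\pmod k$, so the $s-1$ internal gaps of $B$ enclose regions of sizes $a_{j+1}-a_{j}-1\equiv 0\pmod k$, each carrying an independent partition of the same kind (counted by $F_{0}$), while the region after $a_{s}$ is unconstrained modulo $k$ and carries an arbitrary such partition (counted by $F$). This is where the gap condition genuinely enters, and it is the one delicate point. Since $s=1+tk$, summing over $t\ge0$ yields
\[ F = 1 + \frac{x\,F}{1-\bigl(xF_{0}\bigr)^{k}}, \]
which I rewrite as $F=(1-H)^{-1}$ with $H=x/\bigl(1-(xF_{0})^{k}\bigr)$. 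Every monomial of $H$ has exponent $\equiv 1\pmod k$, so $H^{j}$ is supported on exponents $\equiv j\pmod k$; multisecting $F=\sum_{j\ge0}H^{j}$ gives $F_{0}=\sum_{i\ge0}H^{ik}=(1-H^{k})^{-1}$ and, for the residue we care about, $F_{1}=H(1-H^{k})^{-1}=HF_{0}$.

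The key simplification is to observe that $P:=xF_{0}$ and $H$ satisfy the symmetric pair $H=x/(1-P^{k})$ and $P=x/(1-H^{k})$; since both are power series of the form $x+O(x^{k+1})$, a standard uniqueness argument (factoring out $H-P$, whose cofactor is a unit in the power series ring) forces $H=P=xF_{0}$. Consequently $x=H(1-H^{k})=H-H^{k+1}$ and $F_{1}=HF_{0}=H^{2}/x$, so that
\[ |\nc_{N;k}| = f(nk+1) = [x^{nk+1}]\,F_{1} = [x^{nk+2}]\,H^{2}. \]

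Finally I would extract this coefficient by Lagrange inversion. The relation $x=H-H^{k+1}$ means $H=x\,\phi(H)$ with $\phi(w)=(1-w^{k})^{-1}$, whence
\[ [x^{nk+2}]\,H^{2} = \frac{2}{nk+2}\,[w^{nk}]\,(1-w^{k})^{-(nk+2)} = \frac{2}{nk+2}\binom{nk+1+n}{n}, \]
and since $N=nk+1$ this is exactly $\tfrac{2}{N+1}\binom{N+n}{n}$. The routine parts are the two coefficient extractions; the main obstacle is the initial decomposition — correctly seeing that the gap condition makes precisely the internal regions have size divisible by $k$ while leaving the tail free — together with the identification $H=xF_{0}$ that collapses the functional equation to the clean Lagrange-invertible form $x=H-H^{k+1}$.
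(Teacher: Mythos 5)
Your proof is correct, and it takes a genuinely different route from the paper's. The paper argues bijectively: it sends $w\in\nc_{N;k}$ to the reduced factorization $c_N=w\cdot\Krew(w)$, applies the Goulden--Jackson correspondence to get a plane edge-rooted bicolored tree whose vertex degrees are all $1\bmod k$, and deletes the root edge to obtain a pair of $k$-divisible plane rooted trees; this produces the convolution $\lvert\nc_{N;k}\rvert=\sum_{i=0}^{n}\ran(i,k+1,1)\cdot\ran(n-i,k+1,1)$, which \Cref{lem:raney_recursion} collapses to $\ran(n,k+1,2)$. (A second, non-bijective proof via the zeta polynomial appears as \Cref{cor:1modk_bijection}; yours is a third route.) You instead start from \Cref{thm:k_indivisible_equivalent}(iii) and never leave the world of set partitions: the first-block decomposition yields the functional equation for $F$, multisection together with the symmetric system for $H$ and $P=xF_0$ yields $x=H-H^{k+1}$, and Lagrange inversion finishes. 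The points you flag as delicate do hold: the gap condition on the block containing $1$ forces exactly the internal regions to have size $\equiv 0\pmod{k}$ while leaving the tail unconstrained (and the conditions of (iii) are translation-invariant, so they localize to the regions); the identity $(H-P)\bigl(1+HP\sum_{i=0}^{k-2}H^{i}P^{k-2-i}\bigr)=0$ does force $H=P$ because the second factor is a unit of $\mathbb{Z}[[x]]$; and both coefficient extractions are correct, giving $\tfrac{2}{nk+2}\binom{nk+n+1}{n}=\tfrac{2}{N+1}\binom{N+n}{n}$. It is worth observing that the two arguments are secretly close: your relations give $F_0=1+x^{k}F_0^{k+1}$, so $F_0$ is the Fu{\ss}--Catalan series (that is, $f(ik)=\ran(i,k+1,1)$), and $F_1=xF_0^{2}$ is then precisely the paper's recurrence \eqref{eq:k_indivisible_recurrence} in generating-function form. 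So both proofs pass through the fact that a $k$-indivisible noncrossing partition splits into a pair of Fu{\ss}--Catalan objects; the paper realizes the splitting by an explicit bijection through trees, which is more informative (it exhibits the bijection and connects to the factorization and cacti literature), whereas your derivation is analytic but self-contained apart from \Cref{thm:k_indivisible_equivalent}, replacing both the Goulden--Jackson bijection and the Raney convolution identity by multisection and Lagrange inversion.
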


\subsection{The $k$-indivisible noncrossing partition poset}

As with the classical noncrossing partitions, the set of $k$-indivisible noncrossing partitions is naturally ordered by refinement.   We denote this poset by $\pnc_{N;k}$.  In contrast to when $k=1$, $\pnc_{N;k}$ is generally \emph{not} a lattice.  Nevertheless, we  prove the following formula for its zeta polynomial at the end of~\Cref{sec:1modk_enumeration}.

\begin{theorem}\label{thm:1modk_zeta}
  For $k,n\geq 1$, the number of $q$-multichains of $\pnc_{N;k}$ is 
  \begin{displaymath}
    \ZetaPol_{N;k}(q+1) =\frac{q+1}{Nq+1}\binom{Nq+n}{n}.
  \end{displaymath}
\end{theorem}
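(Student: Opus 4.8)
The plan is to compute $\ZetaPol_{N;k}$ by counting multichains and reorganising them into factorisations of the long cycle $c_N$. By definition $\ZetaPol_{N;k}(q+1)$ is the number of weak $q$-multichains $w_1\leqk w_2\leqk\cdots\leqk w_q$ in $\pnc_{N;k}$, and I would first pass from such a multichain to the sequence of its relative Kreweras complements. Exactly as in the classical theory, setting $\delta_0=w_1$, $\delta_i=w_i^{-1}w_{i+1}$ for $1\le i\le q-1$, and $\delta_q=w_q^{-1}c_N$ should give a bijection onto the ordered factorisations
\[
  c_N=\delta_0\,\delta_1\cdots\delta_q,\qquad \delta_i\in\nc_{N;k},\qquad \sum_{i=0}^{q}\ellk(\delta_i)=\ellk(c_N)=n .
\]
The content of this step is that relative complements and partial products stay $k$-indivisible: for the forward map this is automatic, while for the inverse map I would invoke \Cref{thm:k_indivisible_equivalent}(ii) together with the defining description of $\nc_{N;k}$ as prefixes, so that the concatenation of reduced $(k{+}1)$-cycle factorisations of $\ellk$-additive factors is again reduced and each partial product $\delta_0\cdots\delta_{i-1}$ is a prefix, hence lies in $\nc_{N;k}$. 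Note that this reorganisation uses only the graded interval structure and survives the fact that $\pnc_{N;k}$ is not a lattice.

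It then remains to enumerate these $\ellk$-additive factorisations of $c_N$ into $q+1$ ordered factors, and here I would extend the lattice-path computation behind \Cref{rem:1modk_bijection}. The asserted formula is obtained from the cardinality $\tfrac{2}{N+1}\binom{N+n}{n}$ of \Cref{rem:1modk_bijection} by the substitution $N\mapsto Nq$ together with $2\mapsto q+1$; the point is that this dilation is exactly what the multichain length $q$ produces, refining along a $q$-multichain stretching the horizontal extent of the associated path from $N$ to $Nq$. Concretely, I would build a bijection between the factorisations above and lattice paths in the $q$-dilated region and count the latter by the cycle lemma. The coprimality $\gcd(N,n)=1$, which holds because $N=kn+1$, is what makes the answer collapse to a single rational term rather than a Bizley-type sum over common divisors, and the prefactor $\tfrac{q+1}{Nq+1}$ emerges as the proportion of cyclic rotations of the path word that satisfy the relevant ballot condition.

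The main obstacle is precisely this last enumeration: pinning down the bijection to the $q$-dilated path family and running the rotation argument so that the constant comes out as $\tfrac{q+1}{Nq+1}$. When $k=1$ one has $N=n+1$, the ground-set size and the rank are tied together, and the count is the familiar Fuss--Catalan zeta polynomial of the classical noncrossing partition lattice; for $k>1$ the parameters $N$ and $n$ decouple, the enumeration becomes genuinely rational, and the two must be carried independently through the cycle lemma. As sanity checks I would confirm the degenerate evaluations: at $q=0$ the formula gives $\tfrac{1}{1}\binom{n}{n}=1$, the empty multichain, and at $q=1$ it gives $\tfrac{2}{N+1}\binom{N+n}{n}=\lvert\nc_{N;k}\rvert$, recovering \Cref{rem:1modk_bijection}.
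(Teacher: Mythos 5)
Your first step is sound and coincides exactly with the paper's \Cref{lem:multichain_factorization}: passing to relative complements gives a bijection between $q$-multichains and ordered factorizations $c_N=\delta_0\delta_1\cdots\delta_q$ into factors of $\nc_{N;k}$ whose $\ellk$-lengths sum to $n$, and your prefix argument for the inverse map is precisely the ``easily checked'' verification in the paper (using \Cref{cor:lower_ideal} and \Cref{lem:good_cycles_length}). The genuine gap is everything after that. The entire content of \Cref{thm:1modk_zeta} is the enumeration of these factorizations, and your proposal does not carry it out: you never specify the lattice-path family, the bijection from group-theoretic factorizations to paths, or the rotation argument---and you flag this yourself as ``the main obstacle.'' Pattern-matching the answer as a $q$-dilation of \Cref{rem:1modk_bijection} (substituting $N\mapsto Nq$ and $2\mapsto q+1$) is a heuristic, not a mechanism. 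Note also that the paper's proof of \Cref{rem:1modk_bijection} is not a lattice-path/cycle-lemma count in the first place: it is the Goulden--Jackson bijection between \emph{two-factor} reduced factorizations $c_N=w\cdot\Krew(w)$ and bicolored plane trees, and extending that correspondence from two factors to $q+1$ factors (the ``cacti'' setting) is precisely the nontrivial work your sketch would have to do before any cycle lemma can be applied.

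For comparison, the paper closes this gap by a different route: it first refines the count by the rank jump vector $(r_1,\ldots,r_{q+1})$, invoking the Krattenthaler--M{\"u}ller formula for the number of factorizations of $c_N$ into factors of prescribed cycle types and summing over cycle types of fixed $\ellk$-length to get $\frac{1}{N}\prod_{i}\ran(r_i,1-k,N)$ (\Cref{thm:1modk_rank_enumeration}); it then sums over all rank jump vectors using the multi-factor Raney convolution identity (the general form of \Cref{lem:raney_recursion}), which telescopes to $\frac{1}{N}\ran\bigl(n,1-k,(q+1)N\bigr)=\ran(n,qk+1,q+1)$. If you wish to keep your more bijective strategy, the missing ingredient is a multi-factor analogue of the Goulden--Jackson correspondence, after which a cycle-lemma count can indeed produce the constant (as the proportion $\tfrac{q+1}{qN+n+1}$ of good rotations of a word of length $qN+n+1$, which algebraically equals the stated prefactor); but as written, the proposal assumes the hard part rather than proving it.
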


The remainder of the paper is devoted to generalizing enumerative results, objects, and bijections from the classical noncrossing partition lattice (obtained by specializing $k$ to $1$) to $\pnc_{N;k}$.

\subsection{$k$-Parking functions}
In~\Cref{sec:1modk_hurwitz}, we give a bijection from the maximal chains of $\pnc_{N;k}$ to $k$-parking functions, generalizing \cite{stanley97parking}*{Theorem~5.1}.

\subsection{Cambrian lattices}
In~\Cref{sec:camb}, we give a bijection from the maximal chains of $\pnc_{N;k}$ up to commutation equivalence, to $(2k{+}2)$-angulations of a convex $2N$-gon following~\cite{mccammond2017noncrossing}.  This construction recovers an instance of a $2k$-Cambrian lattice from \cite{stump18cataland}.

\subsection{Nonnesting partitions}
In~\Cref{sec:1modk_nonnesting} we construct the \defn{$k$-indivisible nonnesting partitions} as the order ideals of a subposet of a triangular poset.  These are shown to be in bijection with the $k$-indivisible noncrossing partitions.

\subsection{Open problems}
We conclude in~\Cref{sec:1modk_mdivisible} with some open problems: 
we conjecture that $\pnc_{N;k}$ is EL-shellable, and we conjecture many enumerative properties of a certain poset whose elements are the $q$-multichains of $\pnc_{N;k}$.

\section{Preliminaries}
  \label{sec:preliminaries}

\subsection{Hurwitz Action}
Let $G$ be a group and let $n\geq 1$.  The $i$-th standard generator $\sigma_i$ of the braid group $\Braid_{n}$ sends $(g_1,g_2,\ldots,g_n)\in G^n$ to 
\begin{displaymath}
  (g_1,g_2,\ldots,g_{i-1},g_{i+1},g_{i+1}^{-1}g_{i}g_{i+1},g_{i+2},\ldots,g_{n})\in G^n.
\end{displaymath}
Its inverse $\sigma_{i}^{-1}$ sends $(g_1,g_2,\ldots,g_n)\in G^n$ to 
\begin{displaymath}
  (g_1,g_2,\ldots,g_{i-1},g_{i}g_{i+1}g_{i}^{-1},g_{i},g_{i+2},\ldots,g_{n})\in G^n.
\end{displaymath}
This is a group action of $\Braid_{n}$ on $G^n$, and it is clear that it does not change the product of such a tuple.  We call this the \defn{Hurwitz action}.

\subsection{$k$-Absolute order}
Let $K\geq 1$ be an integer, and let $\Symmetric_{K}$ be the symmetric group on $[K]$.  For $k\geq 1$ let $C_{K;k}$ be the set of all $(k{+}1)$-cycles of $\Symmetric_{K}$ and let $\GS_{K;k}\leq\Symmetric_{K}$ denote the subgroup generated by $C_{K;k}$.  If $k$ is odd, then $\GS_{K;k}=\Symmetric_K$; if $k$ is even, then $\GS_{K;k}$ is the alternating group $\mathfrak{A}_K$ on $[K]$.

It will be useful to have some notation regarding multiplication by cycles.  Let $(i\; j)$ be a transposition.  If $w\in\Symmetric_{K}$ has two distinct cycles containing $i$ and $j$, we may write $w=w'(\seq_i)(\seq_j)$ where $\seq_i$ and $\seq_j$ are sequences ending with $i$ and $j$, respectively.  Then $w\cdot (i\;j)=w'(\seq_i\; \seq_j)$, and we say that we \defn{join} the two cycles.  More generally, given $m$ disjoint cycles of $w$, we may join them in a new cycle by multiplying by an $m$-cycle having exactly one element in common with each of them.  The inverse operation is called \defn{cutting} a cycle.

Let $\ellk\colon \GS_{K;k}\to\mathbb{N}$ be the map that assigns to $w\in \GS_{K;k}$ the minimum length of a factorization of $w$ into $(k{+}1)$-cycles.  The \defn{$k$-absolute order} is the following partial order on $\GS_{K;k}$:
\begin{displaymath}
  w\leqk w'\quad\text{if and only if}\quad \ellk(w)+\ellk(w^{-1}w') = \ellk(w').
\end{displaymath}

Since the set of $(k{+}1)$-cycles is a full $\Symmetric_K$-conjugacy class, the map $\ellk$ is invariant under $\Symmetric_{K}$-conjugation by \cite{muehle18poset}*{Proposition~2.3}. We are only aware of simple formulas for $\ellk$ for $k\in\{1,2,3\}$. For example, for $k=1$, if we let $\mathrm{cyc}(w)$ denote the number of cycles of $w \in \Symmetric_K$, then $\ello(w)=K-\cyc(w)$. For $k=2$, we have $\ell_2(w)=K-\mathrm{ocyc(w)}$ where $\mathrm{ocyc}(w)$ denotes the number of odd cycles of $w\in\mathfrak{A}_K$ \cite{muehle18poset}. Some general bounds for $\ell_k$ are given in \cite{herzog76representation}.

\subsection{$(1\bmod{k})$-Permutations}

There is a subset of elements of $\GS_{K;k}$ for which $\ellk$ has a similarly simple form.

\begin{definition}
  A permutation $w\in \GS_{K;k}$ is \defn{$1\bmod{k}$} if---when written as a product of disjoint cycles---all cycles of $w$ have length $1\bmod{k}$.  We denote by $\GSo_{K;k}$ the set of all $(1\bmod{k})$-permutations.
\end{definition}

\begin{lemma}\label{lem:good_cycles_length}
A permutation $w \in \Symmetric_K$ is $1 \bmod{k}$ if and only if $\ell_k(w)=\frac{K-\cyc(w)}{k}$.
\end{lemma}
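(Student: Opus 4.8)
The plan is to relate $\ellk$ to the ordinary reflection length $\ello(w)=K-\cyc(w)$ through the single fact that a $(k{+}1)$-cycle is a product of exactly $k$ transpositions (being a single cycle of length $k{+}1$, it satisfies $\ello=k$). From this I first extract a universal lower bound: given any factorization of $w$ into $m$ many $(k{+}1)$-cycles, expanding each factor into $k$ transpositions realizes $w$ as a product of $mk$ transpositions, so $\ello(w)\le mk$; taking $m=\ellk(w)$ gives
\[
  \ellk(w)\ge\frac{K-\cyc(w)}{k}
\]
for every $w\in\GS_{K;k}$. The lemma then says precisely that the $(1\bmod k)$-permutations are those attaining this bound, and I would prove each implication separately.

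For the forward implication, let $w$ be $1\bmod k$ with cycle lengths $\lambda_1,\dots,\lambda_c$, each congruent to $1$ modulo $k$. Then $K=\sum_i\lambda_i\equiv c\pmod k$, so $k\mid K-\cyc(w)$ and the target quantity is a nonnegative integer. To attain it, I would factor each cycle independently: a single cycle of length $\lambda=mk+1$ is a product of $m$ many $(k{+}1)$-cycles, obtained by starting from one $(k{+}1)$-cycle on $k{+}1$ of its points and repeatedly applying the \emph{join} operation with $k$ fresh points at a time, each join lengthening the cycle by $k$. Concatenating these factorizations over the disjoint cycles of $w$ expresses $w$ as a product of $\sum_i\frac{\lambda_i-1}{k}=\frac{K-\cyc(w)}{k}$ many $(k{+}1)$-cycles, meeting the lower bound.

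The reverse implication is the heart of the argument. Assume $\ellk(w)=\frac{K-\cyc(w)}{k}=:p$ and fix an optimal factorization $w=c_1\cdots c_p$. Replacing each $c_i$ by a reduced (length-$k$) transposition factorization produces a transposition word for $w$ of total length $kp=K-\cyc(w)=\ello(w)$, hence a \emph{reduced} word. I would then invoke the standard forest structure of reduced transposition factorizations: viewing each transposition as an edge on the vertex set $[K]$, the $\ello(w)$ edges form a spanning forest whose connected components are exactly the supports of the cycles of $w$ (indeed, building up from $\id$, each transposition in a reduced word must join two distinct components). The crucial point is that the $k$ edges arising from one factor $c_i$ form a connected subtree on its $(k{+}1)$-element support, so they lie inside a single component; since these $p$ blocks of $k$ edges are edge-disjoint and partition all edges of the forest, every component is a union of whole blocks and therefore carries a number of edges divisible by $k$. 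As a tree on $\lambda_i$ vertices has $\lambda_i-1$ edges, this forces $\lambda_i\equiv 1\pmod k$ for each cycle, so $w$ is $1\bmod k$.

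I expect the main obstacle to be exactly this last step: converting the extremal hypothesis $\ellk(w)=\frac{K-\cyc(w)}{k}$ into information about individual cycle lengths. The mechanism that unlocks it is the observation that attaining the bound is equivalent to the expanded transposition word being reduced; once that is in hand, the forest description together with the connectedness of each $(k{+}1)$-cycle's contribution pins down every component size modulo $k$. Care should also be taken with the even-$k$ case, where $\GS_{K;k}=\mathfrak{A}_K$: a $(1\bmod k)$-permutation has only odd-length cycles and hence is an even permutation, so it genuinely lies in $\GS_{K;k}$ and $\ellk(w)$ is defined.
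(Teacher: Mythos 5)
Your proof is correct, but the decisive step runs on a different mechanism than the paper's. Both arguments share the same outer shell: the lower bound $\ellk(w)\geq\frac{K-\cyc(w)}{k}$ (the paper derives it from the cut-and-join inequality $\cyc(wt)\geq\cyc(w)-k$ for a $(k{+}1)$-cycle $t$; you get it by expanding each factor into $k$ transpositions and quoting $\ello(w)=K-\cyc(w)$, which amounts to the same computation), and the same explicit factorization of an $(sk{+}1)$-cycle into $s$ many $(k{+}1)$-cycles to show the bound is attained by every $(1\bmod k)$-permutation. Where you genuinely diverge is the hard direction, equality $\Rightarrow$ $1\bmod{k}$. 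The paper argues \emph{dynamically}: in a minimal factorization attaining the bound, each successive multiplication must join $k{+}1$ cycles of the partial product into one, and joining $k{+}1$ cycles of length $\equiv 1\pmod{k}$ again yields a cycle of length $\equiv 1\pmod{k}$, so induction from the identity finishes. You argue \emph{statically}: the expanded transposition word has length $kp=K-\cyc(w)=\ello(w)$, hence is reduced; reduced transposition factorizations carry the classical D\'enes-type forest structure (edges form a spanning forest whose components are exactly the cycle supports); each $(k{+}1)$-cycle contributes a connected, edge-disjoint block of $k$ edges, so every tree component has a number of edges divisible by $k$ and therefore $1\bmod{k}$ vertices. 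Both are sound. Your route outsources the induction to a well-known graph-theoretic fact, which makes the divisibility completely transparent; the paper's join-by-join analysis does double duty, since it is reused verbatim to describe the covering relations in \Cref{cor:covering_relations}, something your argument would not yield as directly. Your closing remark on the even-$k$ case (that a $(1\bmod k)$-permutation has only odd cycles, hence lies in $\GS_{K;k}=\mathfrak{A}_{K}$, so $\ellk(w)$ is defined) is a point the paper leaves implicit.
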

In particular this specializes to the above mentioned well-known fact that $\ello(w)=K-\cyc(w)$ for any permutation $w$.

\begin{proof}
Let $w\in \GS_{K;k}$, and let $t$ be a $(k{+}1)$-cycle. Note that $t$ can be written as a product of $k$ transpositions, and so by analyzing the cut and join possibilities, we obtain that $\cyc(wt)\geq \cyc(w)-k$. Furthermore, equality holds if and only if $t$ has at most one element in common with each cycle of $w$---in this case $wt$ is obtained from $w$ by joining the $k{+}1$ cycles of $w$ that have a common element with $t$.
Now fix a minimal factorization of $w$ into $(k{+}1)$-cycles. By induction, starting from the fact that the identity permutation has $K$ cycles of length $1$, the previous inequality implies that any $w\in \GS_{K;k}$ satisfies $\cyc(w) \geq K-k\ell_k(w)$, and equality occurs if and only if $w$ was built by joining $k{+}1$ cycles at a time, as described above. 

In the case of equality, $w$ is $1\bmod{k}$ since joining $k+1$ cycles of length $1\bmod{k}$ gives back another cycle of length $1\bmod{k}$.
Conversely, every $1\bmod{k}$ permutation can be written as a product of $\frac{K-\cyc(w)}{k}$ elements of $C_{K;k}$, for instance by factoring each of its cycles as follows:
\begin{displaymath}
  (a_1\; a_2\; \ldots\; a_{sk+1}) = (a_1\;\ldots\; a_{k+1})\cdot (a_{k+1}\;\ldots\; a_{2k+1})\cdots (a_{(s-1)k+1}\;\ldots\; a_{sk+1}).\qedhere
\end{displaymath}
\end{proof}

The covering relations $\lessdot_{k}$ of the partial order $\leqk$ in which the top element belongs to $\GSo_{K;k}$ are particularly simple to describe.

\begin{corollary}\label{cor:covering_relations}
Let $w\in \GSo_{K;k}$ and $u\in \GS_{K;k}$. Then one has $u\lessdot_{k} w$ if and only if $u$ can be obtained from $w$ by cutting one cycle of $w$ into $k+1$ cycles of length $1\bmod{k}$.
\end{corollary}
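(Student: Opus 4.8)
The plan is to split the statement into an order-theoretic reduction and a combinatorial translation. The reduction I would prove is that, for $w\in\GSo_{K;k}$ and $u\in\GS_{K;k}$, one has $u\lessdot_k w$ if and only if $u<_{k} w$ and $u^{-1}w$ is a single $(k{+}1)$-cycle. To see this, first record that $\ellk$ is subadditive and inversion-invariant (as $C_{K;k}$ is closed under inverses), and that $a<_{k}b$ forces $\ellk(a)<\ellk(b)$ since $\ellk(a^{-1}b)\ge 1$. Given $u<_{k}w$, set $m\defs\ellk(u^{-1}w)$ and concatenate a minimal factorization of $u$ with a minimal factorization $u^{-1}w=t_1\cdots t_m$; because $\ellk(u)+m=\ellk(w)$ this is a minimal factorization of $w$, so by subadditivity each prefix product $u\,t_1\cdots t_i$ has rank $\ellk(u)+i$ and lies in the interval $[u,w]$. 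If $m\ge 2$ then $u\,t_1$ sits strictly between $u$ and $w$, ruling out a cover; if $m=1$ the rank gap is $1$, so no element can lie strictly in between. This is the reduction, after which it remains to decide which $u=wt^{-1}$ with $t\in C_{K;k}$ satisfy $u<_{k}w$.

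For the \emph{if} direction, cutting one cycle of $w$ into $k{+}1$ cycles of length $1\bmod k$ is by definition inverse to the join $w=ut$ with $t\in C_{K;k}$, so $u^{-1}w=t$. Moreover every cycle of $u$ is $1\bmod k$ (the untouched cycles of $w$, together with the $k{+}1$ new ones), so $u\in\GSo_{K;k}$, and the cut raises the cycle count by $k$, giving $\cyc(u)=\cyc(w)+k$. \Cref{lem:good_cycles_length} then yields $\ellk(u)=\tfrac{K-\cyc(u)}{k}=\ellk(w)-1$, whence $\ellk(u)+\ellk(u^{-1}w)=\ellk(w)$ and $u<_{k}w$; the reduction upgrades this to $u\lessdot_k w$.

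For the \emph{only if} direction, suppose $u\lessdot_k w$. The reduction gives $u^{-1}w=t\in C_{K;k}$ and $\ellk(u)=\ellk(w)-1=\tfrac{K-\cyc(w)}{k}-1$. Two inequalities from the proof of \Cref{lem:good_cycles_length} now pin down the cycle structure: applying $\cyc(xt)\ge\cyc(x)-k$ with $x=u$ gives $\cyc(u)\le\cyc(w)+k$, while the general bound $\ellk(u)\ge\tfrac{K-\cyc(u)}{k}$ combined with the value of $\ellk(u)$ gives $\cyc(u)\ge\cyc(w)+k$. Hence $\cyc(u)=\cyc(w)+k$; equality in the second bound forces $u\in\GSo_{K;k}$ by \Cref{lem:good_cycles_length}, and equality in the first is exactly the case in which $t$ meets each cycle of $u$ in at most one element, so—having $k{+}1$ elements—$t$ meets precisely $k{+}1$ distinct cycles of $u$, and $w=ut$ joins them into a single cycle. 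Read backwards, $u$ is obtained from $w$ by cutting that cycle into the $k{+}1$ cycles of $u$, each of length $1\bmod k$ since $u\in\GSo_{K;k}$.

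The main obstacle I anticipate is the \emph{only if} direction: everything hinges on the pincer that simultaneously bounds $\cyc(u)$ from above (via multiplication by $t$) and from below (via the rank of $u$), and on correctly invoking the equality clause in the proof of \Cref{lem:good_cycles_length} to conclude that the sole permitted operation is a join of $k{+}1$ cycles. By contrast, the reduction step is routine once subadditivity of $\ellk$ and the prefix-of-a-geodesic property are in place.
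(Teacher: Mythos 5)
Your proof is correct and takes essentially the same approach as the paper: the paper dismisses this corollary in one line as ``an immediate corollary of the proof of \Cref{lem:good_cycles_length},'' and what you have written is exactly the unpacking of that line---the reduction of covers to length-one gaps $u^{-1}w\in C_{K;k}$, followed by the pincer on $\cyc(u)$ combining the cut/join inequality $\cyc(ut)\geq\cyc(u)-k$ with the bound $\ellk(u)\geq\frac{K-\cyc(u)}{k}$, whose equality cases force $u\in\GSo_{K;k}$ and identify $w=ut$ as a join of $k+1$ cycles of $u$, i.e.\ $u$ as a cut of one cycle of $w$.
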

\begin{proof}
This is an immediate corollary of the proof of \Cref{lem:good_cycles_length}.
\end{proof}

\begin{corollary}\label{cor:lower_ideal}
  If $w \in \GSo_{K;k}$ and $u \leqk w$, then $u \in \GSo_{K;k}$, $u^{-1}w\leq_k w$ and $u^{-1} w \in \GSo_{K;k}$.
\end{corollary}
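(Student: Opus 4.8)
The plan is to exploit the additivity of $\ellk$ along the relation $\leqk$ together with the sharp lower bound on $\ellk$ that is established inside the proof of \Cref{lem:good_cycles_length}. By definition of $u\leqk w$ we have $\ellk(u)+\ellk(u^{-1}w)=\ellk(w)$, and since $w\in\GSo_{K;k}$, \Cref{lem:good_cycles_length} gives $\ellk(w)=\frac{K-\cyc(w)}{k}$. The first step is to recall the two facts furnished by that proof: for \emph{every} $v\in\GS_{K;k}$ one has $\ellk(v)\geq\frac{K-\cyc(v)}{k}$, with equality precisely when $v$ is $1\bmod k$.

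Next I would apply this lower bound to both $u$ and $u^{-1}w$, and combine it with the elementary subadditivity of the transposition word length $\ello(v)=K-\cyc(v)$. Writing $w=u\cdot(u^{-1}w)$ gives $\ello(w)\leq\ello(u)+\ello(u^{-1}w)$, that is, $\cyc(u)+\cyc(u^{-1}w)\leq K+\cyc(w)$. Chaining these estimates yields
\begin{displaymath}
  \ellk(w)=\ellk(u)+\ellk(u^{-1}w)\geq\frac{2K-\cyc(u)-\cyc(u^{-1}w)}{k}\geq\frac{K-\cyc(w)}{k}=\ellk(w).
\end{displaymath}
Since the two ends coincide, every inequality is in fact an equality. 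In particular $\ellk(u)=\frac{K-\cyc(u)}{k}$ and $\ellk(u^{-1}w)=\frac{K-\cyc(u^{-1}w)}{k}$, so the equality case of \Cref{lem:good_cycles_length} forces both $u$ and $u^{-1}w$ to lie in $\GSo_{K;k}$; this settles the first and third assertions.

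For the remaining claim $u^{-1}w\leqk w$, I would unwind the definition and invoke the conjugation-invariance of $\ellk$. Indeed $(u^{-1}w)^{-1}w=w^{-1}uw$ is conjugate to $u$, so $\ellk\bigl((u^{-1}w)^{-1}w\bigr)=\ellk(u)$, and therefore
\begin{displaymath}
  \ellk(u^{-1}w)+\ellk\bigl((u^{-1}w)^{-1}w\bigr)=\ellk(u^{-1}w)+\ellk(u)=\ellk(w),
\end{displaymath}
the last equality being exactly the relation $u\leqk w$. This is the defining condition for $u^{-1}w\leqk w$.

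The only genuinely substantive ingredient is the pinching argument in the second paragraph: everything hinges on recognizing that the additivity of $\ellk$ along the chain, the uniform lower bound $\ellk(v)\geq\frac{K-\cyc(v)}{k}$, and the subadditivity of $K-\cyc(\,\cdot\,)$ can only hold simultaneously if all three are tight. Once this is seen, the two $1\bmod k$ conclusions drop out of the equality characterization, and the order relation $u^{-1}w\leqk w$ is a purely formal consequence of conjugation-invariance. An alternative, more combinatorial route would induct downward along a saturated $\leqk$-chain from $w$ to $u$, using \Cref{cor:covering_relations} to propagate the $1\bmod k$ property across each cover; but the metric argument above is shorter and sidesteps any discussion of whether the interval $[u,w]$ is graded.
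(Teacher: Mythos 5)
Your proof is correct, but it takes a genuinely different route from the paper's. The paper proves $u \in \GSo_{K;k}$ by induction along a saturated chain using \Cref{cor:covering_relations} (exactly the alternative you mention and set aside), proves $u^{-1}w\leqk w$ constructively by fixing a reduced factorization of $w$ with prefix $u$ and using Hurwitz moves to shift the suffix $u^{-1}w$ to the front, and then obtains $u^{-1}w\in\GSo_{K;k}$ by feeding the relation $u^{-1}w\leqk w$ back into the first claim. Your pinching argument instead extracts the uniform bound $\ellk(v)\geq\frac{K-\cyc(v)}{k}$ from the proof of \Cref{lem:good_cycles_length}, combines it with the additivity of $\ellk$ along $\leqk$ and the subadditivity of the transposition length $\ello=K-\cyc$, and forces equality throughout; this delivers both memberships in $\GSo_{K;k}$ in one stroke, with no discussion of covers or saturated chains. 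Your derivation of $u^{-1}w\leqk w$ via $(u^{-1}w)^{-1}w=w^{-1}uw$ and conjugation-invariance of $\ellk$ is the abstract counterpart of the paper's Hurwitz shift---the Hurwitz action is precisely what realizes that conjugation at the level of factorizations---so the two arguments are morally the same, yours being shorter and non-constructive, the paper's producing an explicit reduced factorization of $w$ with prefix $u^{-1}w$, a form of output that is reused elsewhere (e.g.\ in \Cref{lem:snaction}). One dependency worth flagging: you rely on a fact established \emph{inside} the proof of \Cref{lem:good_cycles_length} rather than on its statement alone, but the paper itself does exactly this when deducing \Cref{cor:covering_relations}, so the reliance is unobjectionable.
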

\begin{proof}
That $u \in \GSo_{K;k}$ follows from~\Cref{cor:covering_relations} by induction.  So fix a reduced factorization $w=t_1\cdot t_2 \cdots  t_l$ with $t_i\in C_{K;k}$ for $i\in[l]$ such that $u=t_1t_2\cdots t_s$ for some $s\in[l]$.  Now, $t_{s+1}t_{s+2}\cdots t_l=u^{-1}w$.  The Hurwitz action allows us to write $w=t_{s+1}\cdots t_{l}t'_{1}t'_{2}\cdots t'_{s}$ for certain $t'_i\in C_{K;k}$, so that $u^{-1}w\leq_k w$ as well.
\end{proof}

\section{$k$-Indivisible noncrossing partitions}
  \label{sec:k_indivisible_noncrossing}

\subsection{$k$-Indivisible noncrossing partitions}
For $k,n\geq 1$ and $N=kn+1$, we fix the long cycle $c_N\defs(1\;2\;\ldots\;N).$ Notice that $c_N\in \GSo_{N,k}$, so that $\ellk(c_N)=n$ by \Cref{lem:good_cycles_length}.
\begin{definition}\label{def:k_indivisible}
  The \defn{$k$-indivisible noncrossing partitions} are the elements of 
  \begin{displaymath}
    \nc_{N;k} \defs \{w\in\GS_{N,k}\mid w\leqk c_N\}.
  \end{displaymath}
\end{definition}

We denote the corresponding poset by $\pnc_{N;k}\defs(\nc_{N;k},\leqk)$.  
For $k=1$, the poset $\pnc_{n{+}1;1}=\pnc_{n{+}1}$ is isomorphic to the lattice of noncrossing partitions of $[n{+}1]$~\cite{biane97some}.   \Cref{fig:posetex} illustrates $\pnc_{N;k}$ for $n=3$ and $k=2$. 

\begin{figure}[htbp]
	\centering
	\begin{tikzpicture}
		\def\x{.9};
		\def\y{3.75};
		\def\s{.5};
		\draw(7.7*\x,1.5*\y) node[fill=white,scale=\s,inner sep=.2pt](n1){
			\begin{tikzpicture}
				\polygon{(0,0)}{a}{7}{.5}{.8}{$1$,$2$,$3$,$4$,$5$,$6$,$7$}[1.5];
			\end{tikzpicture}};
		\draw(1*\x,2*\y) node[fill=white,scale=\s,inner sep=.2pt](n7){\begin{tikzpicture}
				\polygon{(0,0)}{a}{7}{.5}{.8}{$1$,$2$,$3$,$4$,$5$,$6$,$7$}[1.5];
				\draw[fill=black] (a1) to[bend left=30] (a6) to[bend right=90] (a7) to[bend right=90] (a1);
			\end{tikzpicture}};
		\draw(2*\x,2*\y) node[fill=white,scale=\s,inner sep=.2pt](n8){\begin{tikzpicture}
				\polygon{(0,0)}{a}{7}{.5}{.8}{$1$,$2$,$3$,$4$,$5$,$6$,$7$}[1.5];
				\draw[fill=black] (a2) to[bend right=90] (a3) to[bend right=10] (a6) to[bend right=10] (a2);
			\end{tikzpicture}};
		\draw(3*\x,2*\y) node[fill=white,scale=\s,inner sep=.2pt](n14){\begin{tikzpicture}
				\polygon{(0,0)}{a}{7}{.5}{.8}{$1$,$2$,$3$,$4$,$5$,$6$,$7$}[1.5];
				\draw[fill=black] (a4) to[bend right=90] (a5) to[bend right=90] (a6) to[bend left=30] (a4);
			\end{tikzpicture}};
		\draw(4*\x,2*\y) node[fill=white,scale=\s,inner sep=.2pt](n4){\begin{tikzpicture}
				\polygon{(0,0)}{a}{7}{.5}{.8}{$1$,$2$,$3$,$4$,$5$,$6$,$7$}[1.5];
				\draw[fill=black] (a1) to[bend right=10] (a4) to[bend right=10] (a7) to[bend right=90] (a1);
			\end{tikzpicture}};
		\draw(5*\x,2*\y) node[fill=white,scale=\s,inner sep=.2pt](n5){\begin{tikzpicture}
				\polygon{(0,0)}{a}{7}{.5}{.8}{$1$,$2$,$3$,$4$,$5$,$6$,$7$}[1.5];
				\draw[fill=black] (a2) to[bend right=90] (a3) to[bend right=90] (a4) to[bend left=30] (a2);
			\end{tikzpicture}};
		\draw(6*\x,2*\y) node[fill=white,scale=\s,inner sep=.2pt](n12){\begin{tikzpicture}
				\polygon{(0,0)}{a}{7}{.5}{.8}{$1$,$2$,$3$,$4$,$5$,$6$,$7$}[1.5];
				\draw[fill=black] (a2) to[bend right=10] (a5) to[bend right=90] (a6) to[bend right=10] (a2);
			\end{tikzpicture}};
		\draw(7*\x,2*\y) node[fill=white,scale=\s,inner sep=.2pt](n2){
			\begin{tikzpicture}
				\polygon{(0,0)}{a}{7}{.5}{.8}{$1$,$2$,$3$,$4$,$5$,$6$,$7$}[1.5];
				\draw[fill=black] (a1) to[bend right=90] (a2) to[bend left=30] (a7) to[bend right=90] (a1);
			\end{tikzpicture}};
		\draw(8*\x,2*\y) node[fill=white,scale=\s,inner sep=.2pt](n6){\begin{tikzpicture}
				\polygon{(0,0)}{a}{7}{.5}{.8}{$1$,$2$,$3$,$4$,$5$,$6$,$7$}[1.5];
				\draw[fill=black] (a7) to[bend right=10] (a3) to[bend right=90] (a4) to[bend right=10] (a7);
			\end{tikzpicture}};
		\draw(9*\x,2*\y) node[fill=white,scale=\s,inner sep=.2pt](n10){\begin{tikzpicture}
				\polygon{(0,0)}{a}{7}{.5}{.8}{$1$,$2$,$3$,$4$,$5$,$6$,$7$}[1.5];
				\draw[fill=black] (a5) to[bend right=90] (a6) to[bend right=90] (a7) to[bend left=30] (a5);
			\end{tikzpicture}};
		\draw(10*\x,2*\y) node[fill=white,scale=\s,inner sep=.2pt](n11){\begin{tikzpicture}
				\polygon{(0,0)}{a}{7}{.5}{.8}{$1$,$2$,$3$,$4$,$5$,$6$,$7$}[1.5];
				\draw[fill=black] (a1) to[bend right=90] (a2) to[bend right=10] (a5) to[bend right=10] (a1);
			\end{tikzpicture}};
		\draw(11*\x,2*\y) node[fill=white,scale=\s,inner sep=.2pt](n15){\begin{tikzpicture}
				\polygon{(0,0)}{a}{7}{.5}{.8}{$1$,$2$,$3$,$4$,$5$,$6$,$7$}[1.5];
				\draw[fill=black] (a3) to[bend right=90] (a4) to[bend right=90] (a5) to[bend left=30] (a3);
			\end{tikzpicture}};
		\draw(12*\x,2*\y) node[fill=white,scale=\s,inner sep=.2pt](n9){\begin{tikzpicture}
				\polygon{(0,0)}{a}{7}{.5}{.8}{$1$,$2$,$3$,$4$,$5$,$6$,$7$}[1.5];
				\draw[fill=black] (a3) to[bend right=10] (a6) to[bend right=90] (a7) to[bend right=10] (a3);
			\end{tikzpicture}};
		\draw(13*\x,2*\y) node[fill=white,scale=\s,inner sep=.2pt](n3){
			\begin{tikzpicture}
				\polygon{(0,0)}{a}{7}{.5}{.8}{$1$,$2$,$3$,$4$,$5$,$6$,$7$}[1.5];
				\draw[fill=black] (a1) to[bend right=90] (a2) to[bend right=90] (a3) to[bend left=30] (a1);
			\end{tikzpicture}};
		\draw(14*\x,2*\y) node[fill=white,scale=\s,inner sep=.2pt](n13){\begin{tikzpicture}
				\polygon{(0,0)}{a}{7}{.5}{.8}{$1$,$2$,$3$,$4$,$5$,$6$,$7$}[1.5];
				\draw[fill=black] (a1) to[bend right=10] (a4) to[bend right=90] (a5) to[bend right=10] (a1);
			\end{tikzpicture}};
		\draw(1*\x,3*\y) node[fill=white,scale=\s,inner sep=.2pt](n29){\begin{tikzpicture}
				\polygon{(0,0)}{a}{7}{.5}{.8}{$1$,$2$,$3$,$4$,$5$,$6$,$7$}[1.5];
				\draw[fill=black] (a1) to[bend left=30] (a6) to[bend right=90] (a7) to[bend right=90] (a1);
				\draw[fill=black] (a3) to[bend right=90] (a4) to[bend right=90] (a5) to[bend left=30] (a3);
			\end{tikzpicture}};
		\draw(2*\x,3*\y) node[fill=white,scale=\s,inner sep=.2pt](n18){\begin{tikzpicture}
				\polygon{(0,0)}{a}{7}{.5}{.8}{$1$,$2$,$3$,$4$,$5$,$6$,$7$}[1.5];
				\draw[fill=black] (a1) to[bend right=90] (a2) to[bend right=90] (a3) to[bend right=10] (a6) to[bend right=90] (a7) to[bend right=90] (a1);
			\end{tikzpicture}};
		\draw(3*\x,3*\y) node[fill=white,scale=\s,inner sep=.2pt](n24){\begin{tikzpicture}
				\polygon{(0,0)}{a}{7}{.5}{.8}{$1$,$2$,$3$,$4$,$5$,$6$,$7$}[1.5];
				\draw[fill=black] (a1) to[bend right=90] (a2) to[bend right=90] (a3) to[bend left=30] (a1);
				\draw[fill=black] (a4) to[bend right=90] (a5) to[bend right=90] (a6) to[bend left=30] (a4);
			\end{tikzpicture}};
		\draw(4*\x,3*\y) node[fill=white,scale=\s,inner sep=.2pt](n26){\begin{tikzpicture}
				\polygon{(0,0)}{a}{7}{.5}{.8}{$1$,$2$,$3$,$4$,$5$,$6$,$7$}[1.5];
				\draw[fill=black] (a1) to[bend right=10] (a4) to[bend right=90] (a5) to[bend right=90] (a6) to[bend right=90] (a7) to[bend right=90] (a1);
			\end{tikzpicture}};
		\draw(5*\x,3*\y) node[fill=white,scale=\s,inner sep=.2pt](n17){\begin{tikzpicture}
				\polygon{(0,0)}{a}{7}{.5}{.8}{$1$,$2$,$3$,$4$,$5$,$6$,$7$}[1.5];
				\draw[fill=black] (a2) to[bend right=90] (a3) to[bend right=90] (a4) to[bend left=30] (a2);
				\draw[fill=black] (a1) to[bend left=30] (a6) to[bend right=90] (a7) to[bend right=90] (a1);
			\end{tikzpicture}};
		\draw(6*\x,3*\y) node[fill=white,scale=\s,inner sep=.2pt](n27){\begin{tikzpicture}
				\polygon{(0,0)}{a}{7}{.5}{.8}{$1$,$2$,$3$,$4$,$5$,$6$,$7$}[1.5];
				\draw[fill=black] (a2) to[bend right=90] (a3) to[bend right=90] (a4) to[bend right=90] (a5) to[bend right=90] (a6) to[bend right=10] (a2);
			\end{tikzpicture}};
		\draw(7*\x,3*\y) node[fill=white,scale=\s,inner sep=.2pt](n22){\begin{tikzpicture}
				\polygon{(0,0)}{a}{7}{.5}{.8}{$1$,$2$,$3$,$4$,$5$,$6$,$7$}[1.5];
				\draw[fill=black] (a1) to[bend right=90] (a2) to[bend left=30] (a7) to[bend right=90] (a1);
				\draw[fill=black] (a4) to[bend right=90] (a5) to[bend right=90] (a6) to[bend left=30] (a4);
			\end{tikzpicture}};
		\draw(8*\x,3*\y) node[fill=white,scale=\s,inner sep=.2pt](n16){\begin{tikzpicture}
				\polygon{(0,0)}{a}{7}{.5}{.8}{$1$,$2$,$3$,$4$,$5$,$6$,$7$}[1.5];
				\draw[fill=black] (a1) to[bend right=90] (a2) to[bend right=90] (a3) to[bend right=90] (a4) to[bend right=10] (a7) to[bend right=90] (a1);
			\end{tikzpicture}};
		\draw(9*\x,3*\y) node[fill=white,scale=\s,inner sep=.2pt](n20){\begin{tikzpicture}
				\polygon{(0,0)}{a}{7}{.5}{.8}{$1$,$2$,$3$,$4$,$5$,$6$,$7$}[1.5];
				\draw[fill=black] (a2) to[bend right=90] (a3) to[bend right=90] (a4) to[bend left=30] (a2);
				\draw[fill=black] (a5) to[bend right=90] (a6) to[bend right=90] (a7) to[bend left=30] (a5);
			\end{tikzpicture}};
		\draw(10*\x,3*\y) node[fill=white,scale=\s,inner sep=.2pt](n21){\begin{tikzpicture}
				\polygon{(0,0)}{a}{7}{.5}{.8}{$1$,$2$,$3$,$4$,$5$,$6$,$7$}[1.5];
				\draw[fill=black] (a1) to[bend right=90] (a2) to[bend right=10] (a5) to[bend right=90] (a6) to[bend right=90] (a7) to[bend right=90] (a1);
			\end{tikzpicture}};
		\draw(11*\x,3*\y) node[fill=white,scale=\s,inner sep=.2pt](n23){\begin{tikzpicture}
				\polygon{(0,0)}{a}{7}{.5}{.8}{$1$,$2$,$3$,$4$,$5$,$6$,$7$}[1.5];
				\draw[fill=black] (a3) to[bend right=90] (a4) to[bend right=90] (a5) to[bend left=30] (a3);
				\draw[fill=black] (a1) to[bend right=90] (a2) to[bend left=30] (a7) to[bend right=90] (a1);
			\end{tikzpicture}};
		\draw(12*\x,3*\y) node[fill=white,scale=\s,inner sep=.2pt](n28){\begin{tikzpicture}
				\polygon{(0,0)}{a}{7}{.5}{.8}{$1$,$2$,$3$,$4$,$5$,$6$,$7$}[1.5];
				\draw[fill=black] (a3) to[bend right=90] (a4) to[bend right=90] (a5) to[bend right=90] (a6) to[bend right=90] (a7) to[bend right=10] (a3);
			\end{tikzpicture}};
		\draw(13*\x,3*\y) node[fill=white,scale=\s,inner sep=.2pt](n19){\begin{tikzpicture}
				\polygon{(0,0)}{a}{7}{.5}{.8}{$1$,$2$,$3$,$4$,$5$,$6$,$7$}[1.5];
				\draw[fill=black] (a1) to[bend right=90] (a2) to[bend right=90] (a3) to[bend left=30] (a1);
				\draw[fill=black] (a5) to[bend right=90] (a6) to[bend right=90] (a7) to[bend left=30] (a5);
			\end{tikzpicture}};
		\draw(14*\x,3*\y) node[fill=white,scale=\s,inner sep=.2pt](n25){\begin{tikzpicture}
				\polygon{(0,0)}{a}{7}{.5}{.8}{$1$,$2$,$3$,$4$,$5$,$6$,$7$}[1.5];
				\draw[fill=black] (a1) to[bend right=90] (a2) to[bend right=90] (a3) to[bend right=90] (a4) to[bend right=90] (a5) to[bend right=10] (a1);
			\end{tikzpicture}};
		\draw(7.3*\x,3.5*\y) node[fill=white,scale=\s,inner sep=.2pt](n30){\begin{tikzpicture}
				\polygon{(0,0)}{a}{7}{.5}{.8}{$1$,$2$,$3$,$4$,$5$,$6$,$7$}[1.5];
				\draw[fill=black] (a1) to[bend right=90] (a2) to[bend right=90] (a3) to[bend right=90] (a4) to[bend right=90] (a5) to[bend right=90] (a6) to[bend right=90] (a7) to[bend right=90] (a1);
			\end{tikzpicture}};
		\draw(n1) -- (n2);
		\draw(n1) -- (n3);
		\draw(n1) -- (n4);
		\draw(n1) -- (n5);
		\draw(n1) -- (n6);
		\draw(n1) -- (n7);
		\draw(n1) -- (n8);
		\draw(n1) -- (n9);
		\draw(n1) -- (n10);
		\draw(n1) -- (n11);
		\draw(n1) -- (n12);
		\draw(n1) -- (n13);
		\draw(n1) -- (n14);
		\draw(n1) -- (n15);
		\draw(n2) -- (n16);
		\draw(n2) -- (n18);
		\draw(n2) -- (n21);
		\draw(n2) -- (n22);
		\draw(n2) -- (n23);
		\draw(n3) -- (n16);
		\draw(n3) -- (n18);
		\draw(n3) -- (n19);
		\draw(n3) -- (n24);
		\draw(n3) -- (n25);
		\draw(n4) -- (n16);
		\draw(n4) -- (n26);
		\draw(n5) -- (n16);
		\draw(n5) -- (n17);
		\draw(n5) -- (n20);
		\draw(n5) -- (n25);
		\draw(n5) -- (n27);
		\draw(n6) -- (n16);
		\draw(n6) -- (n28);
		\draw(n7) -- (n17);
		\draw(n7) -- (n18);
		\draw(n7) -- (n21);
		\draw(n7) -- (n26);
		\draw(n7) -- (n29);
		\draw(n8) -- (n18);
		\draw(n8) -- (n27);
		\draw(n9) -- (n18);
		\draw(n9) -- (n28);
		\draw(n10) -- (n19);
		\draw(n10) -- (n20);
		\draw(n10) -- (n21);
		\draw(n10) -- (n26);
		\draw(n10) -- (n28);
		\draw(n11) -- (n21);
		\draw(n11) -- (n25);
		\draw(n12) -- (n21);
		\draw(n12) -- (n27);
		\draw(n13) -- (n25);
		\draw(n13) -- (n26);
		\draw(n14) -- (n22);
		\draw(n14) -- (n24);
		\draw(n14) -- (n26);
		\draw(n14) -- (n27);
		\draw(n14) -- (n28);
		\draw(n15) -- (n23);
		\draw(n15) -- (n25);
		\draw(n15) -- (n27);
		\draw(n15) -- (n28);
		\draw(n15) -- (n29);
		\draw(n16) -- (n30);
		\draw(n17) -- (n30);
		\draw(n18) -- (n30);
		\draw(n19) -- (n30);
		\draw(n20) -- (n30);
		\draw(n21) -- (n30);
		\draw(n22) -- (n30);
		\draw(n23) -- (n30);
		\draw(n24) -- (n30);
		\draw(n25) -- (n30);
		\draw(n26) -- (n30);
		\draw(n27) -- (n30);
		\draw(n28) -- (n30);
		\draw(n29) -- (n30);
	\end{tikzpicture}
	\caption{The poset $\pnc_{7;2}$.}
	\label{fig:posetex}
\end{figure}

\begin{remark}
Let $\Pi_{K;k}^{(i)}$ be defined as the poset of all partitions of $[K]$ with block sizes congruent to $i \bmod k$.  Some history and results regarding these posets is summarized in~\cite{wachs07poset}*{Examples 4.3.4 and 4.3.5, Exercise 4.3.6, and Remark 4.3.7}---and we are not aware of any substantive results beyond $i=0,1$.

The lattices $\Pi_{K;k}^{(0)}$ first appeared in~\cite{sylvester1976continuous}, and were subsequently studied by Stanley and Sagan in~\cites{stanley1978exponential,sagan1986shellability}.  The corresponding noncrossing partitions were considered by Edelman~\cite{edelman80chain}, and extended to finite Coxeter groups by Armstrong~\cite{armstrong09generalized}.

The posets $\Pi_{K;k}^{(1)}$ were studied in~\cite{calderbank1986partitions}.  However, as far as we know, the corresponding \emph{noncrossing partitions} have not previously been considered.  On the other hand, the study of the maximal chains in $\pnc_{N;k}$ is a classical problem, for example when phrased in the language of transitive factorizations and cacti.  We revisit some of the combinatorics related to these maximal chains in~\Cref{sec:1modk_hurwitz,sec:camb}.
\end{remark}

\subsection{The Kreweras complement}
As in~\Cref{sec:classical}, we graphically represent $w\in\nc_{N}$ as the convex hull of the cycles of $w$ on a regular $N$-gon whose vertices are labeled clockwise by $[N]$.   The terminology ``noncrossing partition'' is justified by the fact that no two convex hulls intersect in this representation.

The \defn{Kreweras complement} of $w\in\nc_{N}$ is the noncrossing partition $\Krew(w)\defs w^{-1} c_N$.  In the graphical representation, this can be visualized by drawing the convex hulls of $w$ on a $2N$-gon labeled clockwise by $\{1,\bar{1},2,\bar{2},\ldots,N,\overline{N}\}$, where the blocks of $w$ use only the non-barred vertices.  Then $\Krew(w)$ corresponds to the coarsest noncrossing partition that can be drawn using the barred vertices without intersecting the blocks of $w$ (see~\Cref{fig:nckrew}).  The following is immediate from~\Cref{cor:lower_ideal}.
\begin{corollary}\label{lem:Kreweras stability}
 For any $w\in \GS_{N;k}$, $w\leq_k c_N$ implies $\Krew(w)\leq_k c_N$; that is, $\nc_{N;k}$ is stable under Kreweras complementation.
\end{corollary}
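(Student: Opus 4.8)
The plan is to read the claim directly off \Cref{cor:lower_ideal}, with the long cycle $c_N$ playing the role of the distinguished top element. The one preliminary point to record is that $c_N\in\GSo_{N;k}$: since $c_N=(1\;2\;\ldots\;N)$ is a single cycle of length $N=kn+1\equiv 1\pmod{k}$, it is by definition a $(1\bmod k)$-permutation, exactly as already noted just before \Cref{def:k_indivisible}. This is what makes \Cref{cor:lower_ideal} applicable with $c_N$ as the top element.

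Concretely, suppose $w\in\GS_{N;k}$ satisfies $w\leqk c_N$, so that $w\in\nc_{N;k}$. I would apply \Cref{cor:lower_ideal} with $c_N$ in the role of its ``$w$'' and with our $w$ in the role of its ``$u$''. The corollary's conclusion $u^{-1}w\leqk w$ then becomes $w^{-1}c_N\leqk c_N$. Since $\Krew(w)\defs w^{-1}c_N$ by definition, this reads $\Krew(w)\leqk c_N$, i.e.\ $\Krew(w)\in\nc_{N;k}$, so $\nc_{N;k}$ is closed under $\Krew$, as claimed. (As $\Krew$ is a bijection of $\nc_N$, its restriction to the finite set $\nc_{N;k}$ is then automatically a bijection, although only the inclusion is needed for stability.)

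There is no genuine obstacle at this level: all the work has already been carried out in the proof of \Cref{cor:lower_ideal}, and it is worth recalling where the content lies, since that is the one step one must trust. Given a reduced factorization $c_N=t_1 t_2\cdots t_n$ into $(k{+}1)$-cycles (recall $\ellk(c_N)=n$) with $w=t_1\cdots t_s$ a prefix, one has $w^{-1}c_N=t_{s+1}\cdots t_n$; this is a factorization of $w^{-1}c_N$ but not yet visibly a \emph{prefix} of a reduced factorization of $c_N$. The Hurwitz action supplies the fix: braid moves rotate the tail $t_{s+1}\cdots t_n$ to the front, producing a new reduced factorization $c_N=t_{s+1}\cdots t_n\,t'_1\cdots t'_s$ of which $w^{-1}c_N$ is now a prefix, whence $w^{-1}c_N\leqk c_N$. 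Thus the sole ingredient beyond bookkeeping is the product-preserving and length-preserving nature of the Hurwitz action established in \Cref{sec:preliminaries}, and the present corollary is then merely a matter of substituting $c_N$ and $w$ into that statement.
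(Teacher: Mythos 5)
Your proof is correct and is exactly the paper's argument: the paper derives this corollary as an immediate consequence of \Cref{cor:lower_ideal} applied with $c_N\in\GSo_{N;k}$ as the top element and the given $w$ as the lower element, so that the conclusion $u^{-1}w\leqk w$ reads $\Krew(w)=w^{-1}c_N\leqk c_N$. Your recap of the Hurwitz-rotation step inside \Cref{cor:lower_ideal} is also faithful to how that lemma is proved in the paper, so there is nothing to add.
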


\subsection{Combinatorial characterization of $k$-indivisible noncrossing partitions}
\label{sec:thm1}

{
\renewcommand{\thetheorem}{\ref{thm:k_indivisible_equivalent}}
\begin{theorem}
Fix $k,n\geq 1$ and write $N=nk+1$.  The following are equivalent.
  \begin{enumerate}[\rm (i)]
    \item $w$ is a $k$-indivisible noncrossing partition on $[N]$. 
    \item $w$ is a noncrossing partition on $[N]$, and $w$ and $\Krew(w)$ are $1\bmod{k}$.
 \item $w$ is a noncrossing partition on $[N]$, $w$ is $1\bmod{k}$, and if $i<j$ are consecutive in a cycle of $w$, then $j-i\equiv 1\pmod{k}$.
  \end{enumerate}
\end{theorem}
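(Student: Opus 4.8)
My plan is to prove $(i)\Leftrightarrow(ii)$ with the length calculus of \Cref{lem:good_cycles_length} and \Cref{cor:lower_ideal}, and then to reduce $(ii)\Leftrightarrow(iii)$ to an explicit description of the cycles of $\Krew(w)=w^{-1}c_N$. For $(i)\Rightarrow(ii)$: since $c_N\in\GSo_{N;k}$ and $w\leqk c_N$, \Cref{cor:lower_ideal} shows $w$ and $w^{-1}c_N=\Krew(w)$ both lie in $\GSo_{N;k}$, hence are $1\bmod k$; substituting $\ellk(u)=\frac{N-\cyc(u)}{k}$ into $\ellk(w)+\ellk(\Krew(w))=\ellk(c_N)=n$ gives $\cyc(w)+\cyc(\Krew(w))=N+1$, equivalently $\ello(w)+\ello(\Krew(w))=N-1=\ello(c_N)$, so $w\leqo c_N$, i.e.\ $w$ is a classical noncrossing partition. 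The converse $(ii)\Rightarrow(i)$ reverses this bookkeeping: a classical noncrossing $w$ has $\cyc(w)+\cyc(\Krew(w))=N+1$, and if $w,\Krew(w)$ are $1\bmod k$ then the same formula yields $\ellk(w)+\ellk(\Krew(w))=\frac{N-1}{k}=\ellk(c_N)$, which is exactly $w\leqk c_N$. (A $1\bmod k$ permutation does lie in $\GS_{N;k}$: for even $k$ its cycles all have odd length, so $w$ is even.)

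It then remains to show, for a noncrossing $1\bmod k$ permutation $w$, that $\Krew(w)$ is $1\bmod k$ if and only if $w$ satisfies the gap condition of $(iii)$. Since every block of a noncrossing partition is an increasing cycle, the pairs $i<j$ consecutive in a cycle are exactly the ascents $a_i\to a_{i+1}=w(a_i)$ inside a block $a_1<\cdots<a_m$, so $(iii)$ reads $a_{i+1}-a_i\equiv 1\pmod k$ for all $1\le i<m$. The key tool I would develop is a reading of the cycles of $\Krew(w)(x)=w^{-1}(x+1)$: starting from the source $a_i$ of an inner arc $a_i\to a_{i+1}$, the map jumps over each block directly nested in that arc onto its largest element, and finally returns to $a_i$. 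Hence the $\Krew(w)$-block attached to an inner arc has size $1+c$, where $c$ is the number of blocks directly nested in the arc, while the outermost blocks of $w$ contribute one further $\Krew(w)$-block whose size is their number. Writing $T(B):=\max B-\min B+1$ for the span of a block, I would record that the spans of the blocks nested in an arc tile it, so $a_{i+1}-a_i-1=\sum_{\text{children}}T(B^{(s)})$, and that the spans of the outermost blocks tile $[N]$, so $N=\sum T$.

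I would finish by strong induction on $T(B)$, from innermost blocks outward. A block with only empty inner arcs has all gaps equal to $1$, singleton attached $\Krew(w)$-blocks, and span $m\equiv 1\pmod k$. For the inductive step, once the gap condition is known on the descendants of $B$, each child satisfies $T(B^{(s)})\equiv 1\pmod k$ (because $\max B^{(s)}-\min B^{(s)}=\sum g\equiv m-1\equiv 0$ when the block has length $\equiv 1$ and all its gaps are $\equiv 1$). Thus $a_{i+1}-a_i-1\equiv c\pmod k$, and the local equivalence ``$a_{i+1}-a_i\equiv 1$'' $\Leftrightarrow$ ``$c\equiv 0$'' $\Leftrightarrow$ ``the arc's $\Krew(w)$-block has size $\equiv 1$'' holds; the outer block is controlled by $N\equiv 1\pmod k$. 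Reading this equivalence in both directions gives $(iii)\Rightarrow(ii)$ and $(ii)\Rightarrow(iii)$.

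The routine part is $(i)\Leftrightarrow(ii)$, which is pure manipulation of $\ellk$ and $\cyc$. The real obstacle is the lemma behind $(ii)\Leftrightarrow(iii)$: making the cycle structure of $w^{-1}c_N$ precise enough to read every block size modulo $k$, and organizing the induction over the nesting forest so that the local identity $a_{i+1}-a_i-1\equiv(\#\text{children})\pmod k$ can be applied. The subtle point is the outermost (wrap-around) arcs, which coalesce into the single ``outer'' block and therefore do not carry the extra $+1$ present for inner arcs; this is the one place where the correspondence between arcs of $w$ and blocks of $\Krew(w)$ must be set up with care.
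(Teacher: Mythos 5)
Your proof plan is correct, and its global structure is genuinely different from the paper's. The paper proves the cycle $\mathrm{(i)}\Rightarrow\mathrm{(ii)}\Rightarrow\mathrm{(iii)}\Rightarrow\mathrm{(i)}$, and its $\mathrm{(iii)}\Rightarrow\mathrm{(i)}$ leg is the laborious one: a constructive induction that, given a noncrossing partition satisfying $\mathrm{(iii)}$, locates a gap in the cycle containing $1$, shows that the number of maximal blocks nested in that gap is a positive multiple of $k$, and joins $k{+}1$ cycles to produce a cover $w\lessdot_{k}w'$ still satisfying $\mathrm{(iii)}$, climbing up to $c_N$ via \Cref{cor:covering_relations}. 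You bypass this entirely with a direct $\mathrm{(ii)}\Rightarrow\mathrm{(i)}$: since $w\leqk c_N$ is by definition the numerical condition $\ellk(w)+\ellk(\Krew(w))=n$, \Cref{lem:good_cycles_length} together with the identity $\cyc(w)+\cyc(\Krew(w))=N+1$ (which is exactly $w\leqo c_N$) yields it in two lines; your parenthetical check that a $1\bmod{k}$ permutation lies in $\GS_{N;k}$ when $k$ is even is precisely what is needed for the lemma to apply, and is correct. Your $\mathrm{(ii)}\Leftrightarrow\mathrm{(iii)}$ rests on the same structural fact that the paper invokes in its $\mathrm{(ii)}\Rightarrow\mathrm{(iii)}$ step---that the cycle of $\Krew(w)$ based at a pair of consecutive entries has length one plus the number of maximal blocks nested between them---but you develop it as an explicit arc--block correspondence (with the wrap-around cycle of maxima of outermost blocks controlled by $N\equiv 1\pmod{k}$) and run the induction over the nesting forest in both directions, whereas the paper inducts on $j-i$, cites the structural fact without detailed proof, and needs only one direction. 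The trade-off: your route is shorter and makes transparent that, for a noncrossing partition, membership in $\nc_{N;k}$ is purely a cycle-counting condition; the paper's constructive leg, though longer, explicitly exhibits saturated chains staying inside the family of partitions satisfying $\mathrm{(iii)}$, which is combinatorially informative in its own right. The one point you should add for full rigor is that your arc-blocks together with the outer block exhaust all cycles of $\Krew(w)$; this follows, for instance, from the count $\cyc(\Krew(w))=N-\cyc(w)+1$, or by observing that every element of $[N]$ is either a non-maximal entry of its block or the maximum of a block that is either outermost or directly nested in some arc.
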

\addtocounter{theorem}{-1}
}

Observe that the additional conditions on cycles in $\mathrm{(ii)}$ and $\mathrm{(iii)}$ are vacuous if $k=1$, so the claim is trivial in this case. 

\begin{proof}
   $\mathrm{(i)}\implies\mathrm{(ii)}$.  We assume $w\leq_k c_N$. Since $c_N\in \GSo_{N;k}$, by \Cref{cor:lower_ideal}, we have $w, \Krew(w) \in \GSo_{N;k}$. Now   $\ell_k(w)+\ell_k(\Krew(w))=n$ which can be written as $\ello(w)+\ello(\Krew(w))=nk$ by \Cref{lem:good_cycles_length}. This means that $w\leq_1 c_N$, that is, $w$ is a noncrossing partition.\smallskip

 $\mathrm{(ii)}\implies\mathrm{(iii)}$.
 Let $w\in\nc_{N;1}$ such that both $w$ and $\Krew(w)$ are $1\bmod{k}$.  Let $i,j$ be two consecutive entries in a cycle of $w$ with $i<j$. We want to show that $j-i\equiv1\pmod{k}$. This is trivial if $j=i+1$, and we will assume by induction that this holds for any consecutive entries $i_1<j_1$ in a cycle of $w$ such that $j_1-i_1<j-i$. Consider the maximal (with respect to nesting) cycles of $w$ that are between $i$ and $j$: their number is a multiple of $k$ because this number is one less than the length of a cycle of $\Krew(w)$, which is  $1\bmod{k}$. Order these cycles $\zeta_1,\zeta_2,\ldots,\zeta_{mk}$ so that $\max(\zeta_p)<\min(\zeta_{p+1})$. In fact, if $a_p= \min(\zeta_{p})$ and $b_p=\max(\zeta_p)$, we have $b_p=a_{p+1}-1$ for $p=1,\ldots,mk-1$, with boundary conditions $a_1=i+1$ and $b_{mk}=j-1$. We can therefore write
 \[j-i=\sum_{p=1}^{mk}(b_{p}-a_p)+1+mk.\]
  By induction each $\zeta_p$ satisfies the cycle conditions in $\mathrm{(iii)}$, which immediately implies $b_p-a_p\equiv 0\pmod{k}$. Therefore the expression above for $j-i$ is $1\bmod{k}$ as desired.
  
  $\mathrm{(iii)}\implies\mathrm{(i)}$. Given $w$ a noncrossing partition satisfying the $\bmod k$ conditions of $\mathrm{(iii)}$, we want to prove $w\leqk c_N$. If $w=c_N$ we are done, so we suppose that $w\neq c_N$. We will construct a $w'\in \GS_{N;k}$ such that $w\lessdot_{k}w'$ and $w'$ also satisfies $\mathrm{(iii)}$.
  
  Consider the cycle $\zeta_{0}$ of $w$ containing $1$, $\zeta_0=(u_1<u_2<\cdots<u_{kr_1+1})$ with $u_1=1$.  Since $w\neq c_N$, either there exists $q\in[kr_1]$ such that $u_{q+1}-u_q>1$, or $u_i=i$ for all $i$, in which case pick $q=u_q=kr_1+1<N$ and set $u_{q+1}=N+1$. Now consider the maximal cycles from left to right $\zeta_1,\ldots,\zeta_{d}$ between $u_q$ and $u_{q+1}$, so  $d\geq 1$ by our choice of $q$ . For $1\leq p\leq d$, write $a_p,b_p$ for the minimal and maximal elements of $\zeta_p$, so that we get 
  \[u_{q+1}-u_q=\sum_{p=1}^{d}(b_{p}-a_p)+1+d.\]
Now we have $b_p-a_p\equiv 0\pmod{k}$ as above. Since $u_{q+1}-u_q\equiv 1\pmod{k}$, it follows that $d$ is a multiple of $k$, and so $d\geq k$ because $d\geq 1$.
  Now $\zeta_0\zeta_1\cdots\zeta_{k}\cdot(u_p\;b_1\;\ldots\;b_{k})$ is an increasing cycle that is derived from joining $\zeta_0,\zeta_1,\ldots,\zeta_{k}$. Thus $w'=w\cdot(u_p\;b_1\;\ldots\;b_{k})$ satisfies all conditions in $\mathrm{(iii)}$, so by induction we have $w'\leqk c_N$. Moreover, we have $w\lessdot_{k}w'$ by \Cref{cor:covering_relations},  so that $w\leq_k c_N$.
\end{proof}

\begin{remark}\label{rem:increasing_cycles}
  \Cref{thm:k_indivisible_equivalent} implies that the name ``$k$-indivisible noncrossing partition'' for the elements of $\nc_{N;k}$ is indeed justified: every such element corresponds to a noncrossing partition of [N].  This property is not a priori clear from \Cref{def:k_indivisible}.
  
  As such, each cycle of $w\in\nc_{N;k}$ can be written such that its entries form an increasing sequence of integers.
\end{remark}

\Cref{thm:k_indivisible_equivalent} implies that $\pnc_{N;k}$ is an interval in ${(\GS_{N;k},\leqk)}$.

\begin{corollary}\label{cor:induced_subposet}
  The poset $\pnc_{N;k}$ is an induced subposet of $\pnc_{N;1}$: for all $w,w'\in\nc_{N;k}$, $w\leqk w'$ if and only if $w\leqo w'$.
\end{corollary}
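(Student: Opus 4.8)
The plan is to exploit that both $w$ and $w'$ lie in $\GSo_{N;k}$ (by \Cref{thm:k_indivisible_equivalent}), so that \Cref{lem:good_cycles_length} gives $\ellk(w)=\tfrac1k\ello(w)$ and $\ellk(w')=\tfrac1k\ello(w')$. Since each of $\leqk$ and $\leqo$ is defined by an additivity condition on lengths, the whole statement reduces to comparing $\ellk(w^{-1}w')$ with $\tfrac1k\ello(w^{-1}w')$. By \Cref{lem:good_cycles_length} these two quantities agree precisely when $w^{-1}w'$ is itself $1\bmod k$, so the crux is controlling the cycle type of $w^{-1}w'$.

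For the implication $w\leqk w'\Rightarrow w\leqo w'$, I would argue as follows. Since $w'\in\GSo_{N;k}$ and $w\leqk w'$, \Cref{cor:lower_ideal} gives $w^{-1}w'\in\GSo_{N;k}$, hence $\ellk(w^{-1}w')=\tfrac1k\ello(w^{-1}w')$. The defining equality $\ellk(w)+\ellk(w^{-1}w')=\ellk(w')$ then becomes, after multiplying by $k$, the equality $\ello(w)+\ello(w^{-1}w')=\ello(w')$; that is, $w\leqo w'$.

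The reverse implication is where the real work lies, and it reduces to the claim that $w\leqo w'$ forces $w^{-1}w'$ to be $1\bmod k$. Granting this, $\ellk(w^{-1}w')=\tfrac1k\ello(w^{-1}w')=\ellk(w')-\ellk(w)$, and dividing the $\ello$-additivity of $w\leqo w'$ by $k$ yields $w\leqk w'$. To prove the claim I would restrict to a single cycle $\gamma=(c_1<c_2<\cdots<c_m)$ of $w'$ (written increasingly, as permitted by \Cref{rem:increasing_cycles}); since $w$ refines $w'$, the support $\{c_1,\dots,c_m\}$ is a union of cycles of $w$, and on this support $w^{-1}w'$ equals $(w|_\gamma)^{-1}\gamma$, where $w|_\gamma$ is the restriction of $w$. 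Relabelling $c_i\mapsto i$ identifies $\gamma$ with the long cycle $c_m$ (note $m\equiv 1\bmod k$ because $w'$ is $1\bmod k$), sends $w|_\gamma$ to a noncrossing partition $\pi$ of $[m]$ all of whose blocks are $1\bmod k$, and sends the restricted $w^{-1}w'$ to the Kreweras complement $\Krew(\pi)=\pi^{-1}c_m$. So it suffices to show $\pi\in\nc_{m;k}$, for then $\Krew(\pi)$ is $1\bmod k$ by \Cref{thm:k_indivisible_equivalent}.

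I expect the main obstacle to be verifying that $\pi$ satisfies condition~(iii) of \Cref{thm:k_indivisible_equivalent} in the relabelled cycle---i.e.\ that consecutive block-entries of $\pi$ are at distance $1\bmod k$---because this does \emph{not} follow from $\pi$ being $1\bmod k$ alone (for instance $(1\;3\;5)(2)(4)$ is a $1\bmod 2$ noncrossing partition of $[5]$ whose Kreweras complement $(1\;2)(3\;4)(5)$ is not $1\bmod 2$). The point will be that it does follow once one uses the gap conditions of \emph{both} $w$ and $w'$: if $c_a<c_b$ are consecutive in a cycle of $w$, then the integers of $[N]$ strictly between them number $\equiv 0\bmod k$ (the gap condition for $w$), and those among them that do not lie in $\gamma$ split, according to the consecutive pairs $c_i,c_{i+1}$ of $\gamma$ lying between $c_a$ and $c_b$, into groups each of size $\equiv 0\bmod k$ (the gap condition for $w'$ applied to $\gamma$). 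Subtracting, the number of $\gamma$-elements strictly between $c_a$ and $c_b$, which is exactly the relabelled distance minus one, is $\equiv 0\bmod k$. With condition~(iii) in hand, \Cref{thm:k_indivisible_equivalent} closes the argument.
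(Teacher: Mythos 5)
Your proof is correct, and it follows the same route as the paper's: the forward implication is handled identically (via \Cref{cor:lower_ideal} and \Cref{lem:good_cycles_length}, converting $\ellk$ into $\tfrac{1}{k}\ello$), and for the converse both you and the paper reduce to a single cycle $\gamma$ of $w'$, relabel its support order-preservingly so that $\gamma$ becomes an initial long cycle, and then apply \Cref{thm:k_indivisible_equivalent} to the relabelled restriction of $w$. The one genuine difference is what gets verified at that point. The paper simply asserts that characterization (ii) applies to the relabelled restriction---i.e.\ that its Kreweras complement is $1\bmod{k}$---without checking it; you instead verify characterization (iii), by splitting the integers strictly between consecutive entries $c_a<c_b$ of a cycle of $w$ into those lying in $\gamma$ and those outside, and observing that the latter fall into groups of size $\equiv 0\pmod{k}$ by the gap condition for $w'$, so that subtracting from $c_b-c_a-1\equiv 0\pmod{k}$ yields the gap condition for the relabelled partition. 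This supplies precisely the justification the paper elides, and your example $(1\;3\;5)(2)(4)$ for $k=2$ shows the elision is not vacuous: a noncrossing partition all of whose blocks are $1\bmod{k}$ need not lie in $\nc_{m;k}$, so some argument combining the gap conditions of \emph{both} $w$ and $w'$ is genuinely required. In short: same skeleton as the paper, but your write-up closes a step that the paper's proof takes for granted.
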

\begin{proof}
  The reader may find it helpful to consider the graphical representation of $\Krew$ given in Figure~\ref{fig:nckrew}. Let $w,w'\in\nc_{N;k}$.  By ~\Cref{thm:k_indivisible_equivalent}, each of $w$, $w'$, $\Krew(w)$, $\Krew(w')$ is $1\bmod{k}$.  
  
  Assume first that $w\leqk w'$, that is, $\ellk(w) + \ellk(w^{-1}w') = \ellk(w')$. Then by \Cref{thm:k_indivisible_equivalent} which applies to all three permutations due to \Cref{cor:lower_ideal}, we get $\frac{\ello(w)}{k} + \frac{\ello(w^{-1}w')}{k} = \frac{\ello(w')}{k}$, which after multiplying by $k$ tells us precisely $w\leqo w'$.

Conversely, assume  $w\leqo w'$. Because $w,w'\in \nc_{N;k}$, this simply means that the supports of the cycles of $w$ are included in those of $w'$. We can thus assume without loss of generality that $w'$ consists of a single cycle. Moreover, because of the invariance of $\ellk$ under conjugation, we can even assume $w'=c_{N'}=(1\;2\;\ldots\; N')$ for $N'=mk+1$ with $m\leq n$. So we have $w\leqo c_N'$ and $w$ is a noncrossing partition on $[N']$. By~\Cref{thm:k_indivisible_equivalent}, using the characterization   $\mathrm{(ii)}$, it follows that $w\leqk c_{N'}$, which achieves the proof. 
\end{proof}

\section{Enumerative properties of $k$-indivisible noncrossing partitions}
  \label{sec:1modk_enumeration}

For integers $n,p,r\geq 1$, let us define the \defn{Raney number} by
\begin{displaymath}
  \ran(n,p,r) \defs \frac{r}{np+r}\binom{np+r}{n}.
\end{displaymath}

The specialization $\ran(n,2,1)$ recovers the Catalan number $\frac{1}{n+1}\binom{2n}{n}$, while $\ran(n,p,1)$ recovers the Fu{\ss}--Catalan number $\frac{1}{(p-1)n+1}\binom{pn}{n}$.   

The Raney numbers satisfy the following Catalan-like recurrence.

\begin{lemma}[\cite{graham94concrete}*{p.~202,~Equation~(5.63)}]\label{lem:raney_recursion}
  For integers $n,p,r,s\geq 1$ we have
  \begin{displaymath}
    \ran(n,p,r+s) = \sum_{i=0}^{n}{\ran(i,p,r)\cdot\ran(n-i,p,s)}.
  \end{displaymath}
\end{lemma}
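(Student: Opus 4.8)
The plan is to prove the identity at the level of formal power series, where it collapses to the law of exponents. Introduce the \emph{generalized binomial series} $B(x)=B_p(x)$, the unique formal power series over $\mathbb{Q}$ with constant term $1$ satisfying
\begin{displaymath}
  B(x) = 1 + x\,B(x)^p.
\end{displaymath}
Since $p\geq 1$, this equation determines the coefficients of $B$ recursively, so $B$ is well defined. The key input is a closed form for its powers: for every integer $r\geq 1$,
\begin{displaymath}
  B(x)^r = \sum_{n\geq 0}\ran(n,p,r)\,x^n = \sum_{n\geq 0}\frac{r}{np+r}\binom{np+r}{n}x^n.
\end{displaymath}
This is \cite{graham94concrete}*{Equation~(5.60)} and is the substantive step. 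I would derive it by Lagrange--B\"urmann inversion applied to $v:=B-1=x(1+v)^p$: with $H(v)=(1+v)^r$ one obtains $[x^n]B^r=\frac{1}{n}[v^{n-1}]\,r(1+v)^{r-1}(1+v)^{pn}=\frac{r}{n}\binom{pn+r-1}{n-1}$, and a one-line factorial manipulation checks that $\frac{r}{n}\binom{pn+r-1}{n-1}=\frac{r}{np+r}\binom{np+r}{n}=\ran(n,p,r)$.

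Granting this power-series identity, the recurrence is immediate. Since formal power series form a commutative ring, $B(x)^{r+s}=B(x)^r\cdot B(x)^s$. Comparing the coefficient of $x^n$ on both sides and expanding the product on the right by the Cauchy rule yields
\begin{displaymath}
  \ran(n,p,r+s) = [x^n]\,B(x)^{r+s} = [x^n]\bigl(B(x)^r\,B(x)^s\bigr) = \sum_{i=0}^{n}\ran(i,p,r)\,\ran(n-i,p,s),
\end{displaymath}
which is the asserted Catalan-like recurrence.

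The main obstacle is the middle step, the closed form $[x^n]B(x)^r=\ran(n,p,r)$; everything else is bookkeeping around the multiplicativity of $B^r$ in the exponent. An alternative route avoids generating functions and makes the same point combinatorially. The functional equation $B=1+xB^p$ is exactly the decomposition of a plane $p$-ary tree (a leaf, or an internal root with an ordered list of $p$ subtrees), so $[x^n]B^r=\ran(n,p,r)$ counts ordered forests of $r$ plane $p$-ary trees with $n$ internal nodes in total. An element counted by $\ran(n,p,r+s)$ is a forest of $r+s$ trees, which splits uniquely into its first $r$ trees (using some $i$ internal nodes) and its last $s$ trees (using the remaining $n-i$), giving the convolution directly. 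This obstacle-free decomposition is precisely the combinatorial shadow of the identity $B^{r+s}=B^r B^s$, so either argument can be used; I would present the generating-function version as the shortest.
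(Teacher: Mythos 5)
Your proposal is correct and takes essentially the same route as the paper, which offers no proof of its own but cites \cite{graham94concrete}*{p.~202,~Equation~(5.63)}: there the identity is derived exactly as you do it, from the generalized binomial series $B_p(x)=1+x\,B_p(x)^p$, the expansion $[x^n]B_p(x)^r=\ran(n,p,r)$ (their Equation~(5.60)), and the multiplicativity $B_p^{r+s}=B_p^{r}B_p^{s}$ via the Cauchy product. Your Lagrange--B\"urmann derivation of the power formula (and the equivalent $p$-ary forest decomposition) simply makes the cited step self-contained.
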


\begin{remark}\label{rem:tree_bijection}
  Let us say that a plane rooted tree is \defn{$k$-divisible} if each vertex has $0\bmod k$-many children.  It is \defn{$(k{+}1)$-ary} if every non-leaf vertex has exactly $k+1$ children.

  It is well known that $k$-divisible trees with $kn+1$ vertices are enumerated by the Fu{\ss}--Catalan number $\ran(n,k+1,1)$. Such trees $T$ are in bijection with $(k{+}1)$-ary trees $T'$ with $n$ non-leaf vertices. Indeed, start at the root of $T$.  If it has no children, it must be that $n=0$, and we set $T'=T$.  Otherwise, by assumption, the root of $T$ has $ik$ children.  We keep the first $k$ of them, and add a new root child to which we attach all the remaining $(i-1)k$ root children.  We now proceed inductively, until we obtain the desired tree $T'$.  This process is clearly reversible (and thus bijective), by contracting along right-most children.
\end{remark}

\subsection{Cardinality}

{
\renewcommand{\thetheorem}{\ref{rem:1modk_bijection}}
\begin{theorem}
  The cardinality of $\nc_{N;k}$ is
  \begin{displaymath}
    \ran(n,k+1,2) = \frac{2}{N+1}\binom{N+n}{n}.
  \end{displaymath}
\end{theorem}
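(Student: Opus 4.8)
The plan is to pass to the purely combinatorial model furnished by \Cref{thm:k_indivisible_equivalent}(iii): an element of $\nc_{N;k}$ is a noncrossing partition of $[N]$ all of whose blocks have size $\equiv 1\pmod k$ and in which consecutive entries of a block differ by $\equiv 1\pmod k$ (equivalently, enclose a number of points divisible by $k$). Writing $A_n\defs\lvert\nc_{N;k}\rvert$, I would introduce an auxiliary family of ``gap pieces'', counted by $B_m$: noncrossing partitions of a contiguous set of $km$ integers satisfying these very same two congruence conditions, with the convention $B_0=1$. I work with the generating functions $A(x)=\sum_{n\ge 0}A_n x^n$ and $B(x)=\sum_{m\ge 0}B_m x^m$.

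First I would decompose $w\in\nc_{N;k}$ according to the block $\zeta_0$ containing $1$. If $\zeta_0=(1=u_1<u_2<\cdots<u_{ks+1})$, then the congruence conditions force each of the $ks$ interior gaps $(u_i,u_{i+1})$ and the final gap $(u_{ks+1},N]$ to contain a number of points divisible by $k$; moreover $u_{ks+1}\equiv 1\pmod k$, so that the final gap is of the same form. By the noncrossing condition, $w$ is recovered from $\zeta_0$ together with an independent gap piece in each of these $ks+1$ regions, and any such data yields a valid element (condition (iii) holds by construction, and (i) then follows from \Cref{thm:k_indivisible_equivalent}). Since the positions of $\zeta_0$ are determined by the ordered sequence of region sizes, this gives
\begin{displaymath}
  A(x)=\sum_{s\ge 0}x^{s}B(x)^{ks+1}=\frac{B(x)}{1-xB(x)^{k}}.
\end{displaymath}

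The crux is to identify $B$. I would show that $B_m=\ran(m,k+1,1)$ is the Fu{\ss}--Catalan number, equivalently that $B$ satisfies the functional equation $B(x)=1+xB(x)^{k+1}$. The cleanest route is a bijection between gap pieces on $km$ points and $k$-divisible plane trees on $km+1$ vertices, after which the count is exactly the Fu{\ss}--Catalan number recorded in \Cref{rem:tree_bijection}. This is the main obstacle: the naive attempt to peel the block of $1$ off of a gap piece fails, because removing any bounded prefix destroys the divisibility of the total size modulo $k$, so the recursion for $B$ does not close within the gap-piece family and genuinely requires the tree model (or, equivalently, a system of auxiliary series indexed by the size modulo $k$).

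Granting the functional equation, substituting $xB(x)^{k+1}=B(x)-1$ into the displayed formula yields $1-xB(x)^{k}=1/B(x)$, hence $A(x)=B(x)^{2}$. By \Cref{lem:raney_recursion} with $p=k+1$ and $r=s=1$, the coefficients of $B(x)^{2}$ are the Raney numbers $\ran(n,k+1,2)$, so that
\begin{displaymath}
  A_n=\ran(n,k+1,2)=\frac{2}{N+1}\binom{N+n}{n},
\end{displaymath}
the last equality being a routine simplification of the definition of $\ran(n,k+1,2)$ under the substitution $N=kn+1$ (so that $N+n+1=n(k+1)+2$).
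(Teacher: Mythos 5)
Your outer decomposition is sound: by \Cref{thm:k_indivisible_equivalent}(iii), splitting $w\in\nc_{N;k}$ along the block $\zeta_0$ containing $1$ does produce $ks+1$ independent regions whose sizes are divisible by $k$, the restrictions to these regions are exactly your ``gap pieces,'' and the reconstruction argument is valid; the generating-function identity $A(x)=B(x)/(1-xB(x)^{k})$, the algebra deducing $A=B^{2}$ from $B=1+xB^{k+1}$, and the final appeal to \Cref{lem:raney_recursion} are all correct. But the proof has a genuine gap exactly where you flag it: the identity $B_m=\ran(m,k+1,1)$ is never proved. You correctly observe that the naive peeling recursion does not close within the gap-piece family (the terminal region of a gap piece has size $\equiv k-1 \pmod{k}$), and your proposed remedy---``a bijection between gap pieces on $km$ points and $k$-divisible plane trees on $km+1$ vertices''---is only named, not constructed. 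Nothing in the paper you cite supplies it: \Cref{rem:tree_bijection} only converts $k$-divisible trees into $(k{+}1)$-ary trees, and says nothing about how to turn a noncrossing-type object into a tree. Note also that, by the $s=0$ case of your own decomposition, the claim $B_n=\ran(n,k+1,1)$ is equivalent to counting the elements of $\nc_{kn+1;k}$ in which $1$ is a singleton, i.e.\ it is a statement of essentially the same nature and difficulty as the theorem being proved, so ``granting'' it is circular in spirit. Either the missing bijection must be built (this is real work: it is the analogue of half of the tree surgery below), or the system of residue-indexed auxiliary series you allude to must actually be set up and solved.

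For comparison, the paper's proof invests its effort precisely at this point, and does so with a different decomposition: it encodes $w$ by the reduced factorization $c_N=w\cdot\Krew(w)$ and invokes the Goulden--Jackson bijection between such factorizations and plane edge-rooted bicolored trees with all vertex degrees $\equiv 1\pmod{k}$; deleting the root edge splits this into a pair of $k$-divisible plane trees, which yields $\lvert\nc_{N;k}\rvert=\sum_{i=0}^{n}\ran(i,k+1,1)\,\ran(n-i,k+1,1)$ directly, and \Cref{lem:raney_recursion} finishes. (The paper also gives a second, non-bijective proof via \Cref{thm:1modk_zeta} at $q=1$.) Your first-block decomposition is a legitimately different and attractive route---it stays entirely inside the combinatorial model of \Cref{thm:k_indivisible_equivalent}(iii) and isolates the difficulty into a single clean claim about gap pieces---but as written that claim is an unproven lemma whose proof would require machinery comparable to what the paper actually uses, so the proposal is incomplete.
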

\addtocounter{theorem}{-1}
}
\begin{proof}
  We will prove this bijectively (see~\Cref{cor:1modk_bijection} for another proof); the reader is invited to look at \Cref{fig:goulden_jackson} which illustrates the bijection.
  
  We first map $w\in\nc_{N;k}$ to the factorization $c_N=w\cdot\Krew(w)$ and apply a classical bijection due to Goulden and Jackson~\cite{goulden92combinatorial}*{Theorem~2.1}.  Since they are reduced, factorizations of the form $w\cdot\Krew(w)$ are in bijection with the set of plane edge-rooted trees with $N$ edges and $N+1$ vertices each of degree $1\bmod k$, with vertices alternately colored white and black.  The white vertices correspond to cycles in $w$, and the black vertices to the cycles in $\Krew(w)$ as follows.  Starting from the root edge 
(moving from white to black), we walk around the tree (keeping the tree to our right).  Each of the $N$ edges of the tree is encountered twice, and we label them by the order in which they are visited when moving from a white to a black vertex.  Reading the cyclic sequence of edge labels clockwise around the white vertices recovers the cycles of $w$; and similarly for the black vertices and $\Krew(w)$.

Break this tree into two by deleting the root edge, and root both of the resulting trees using the vertex adjacent to the deleted root edge.  
Since both $w$ and $\Krew(w)$ are $1\bmod k$, each of the vertices in the resulting pair of trees has a multiple of $k$ many children.
By \Cref{rem:tree_bijection}, the resulting trees are counted by an appropriate Fu{\ss}--Catalan number, from which we conclude that
  \begin{equation}\label{eq:k_indivisible_recurrence}
    \lvert\nc_{N;k}\rvert = \sum_{i=0}^{n}{\ran(i,k+1,1)\cdot\ran(n-i,k+1,1)}.
  \end{equation}
  Hence, $\lvert\nc_{N;k}\rvert$ satisfies the recursion given in \Cref{lem:raney_recursion}, and by checking the initial condition, we see that $\lvert\nc_{N;k}\rvert=\ran(n,k+1,2)$ as desired.
  \end{proof}

\begin{remark}
	Recall from \cite{edelman80chain} that $k$-divisible noncrossing partitions are counted by Fu{\ss}--Catalan numbers, too.  Therefore, \eqref{eq:k_indivisible_recurrence} essentially states that any $k$-indivisible noncrossing partition can be broken into a pair of $k$-divisible noncrossing partitions.
\end{remark}

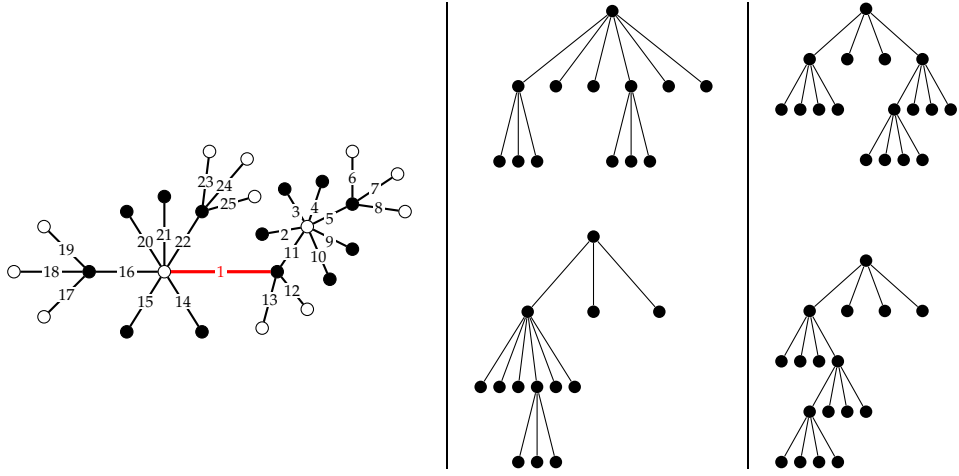
\begin{figure}
  \centering
  \begin{tikzpicture}
    \def\x{1};
    \def\y{1};
    \def\s{.5};
    \draw(1*\x,-.5*\x) node{};
    \draw(1.4*\x,1.4*\y) node[circle,draw,scale=\s](a1){};
    \draw(1*\x,2*\y) node[circle,draw,scale=\s](a2){};
    \draw(1.4*\x,2.6*\y) node[circle,draw,scale=\s](a3){};
    \draw(3*\x,2*\y) node[circle,draw,scale=\s](a4){};
    \draw(3.6*\x,3.6*\y) node[circle,draw,scale=\s](a5){};
    \draw(4.1*\x,3.5*\y) node[circle,draw,scale=\s](a6){};
    \draw(4.2*\x,3*\y) node[circle,draw,scale=\s](a7){};
    \draw(4.3*\x,1.25*\y) node[circle,draw,scale=\s](a8){};
    \draw(4.9*\x,1.5*\y) node[circle,draw,scale=\s](a9){};
    \draw(4.9*\x,2.6*\y) node[circle,draw,scale=\s](a10){};
    \draw(5.5*\x,3.6*\y) node[circle,draw,scale=\s](a11){};
    \draw(6.1*\x,3.3*\y) node[circle,draw,scale=\s](a12){};
    \draw(6.2*\x,2.8*\y) node[circle,draw,scale=\s](a13){};
    \draw(2*\x,2*\y) node[circle,draw,fill,scale=\s](b1){};
    \draw(2.5*\x,1.2*\y) node[circle,draw,fill,scale=\s](b2){};
    \draw(3.5*\x,1.2*\y) node[circle,draw,fill,scale=\s](b3){};
    \draw(2.5*\x,2.8*\y) node[circle,draw,fill,scale=\s](b4){};
    \draw(3*\x,3*\y) node[circle,draw,fill,scale=\s](b5){};
    \draw(3.5*\x,2.8*\y) node[circle,draw,fill,scale=\s](b6){};
    \draw(4.5*\x,2*\y) node[circle,draw,fill,scale=\s](b7){};
    \draw(4.3*\x,2.5*\y) node[circle,draw,fill,scale=\s](b8){};
    \draw(4.6*\x,3.1*\y) node[circle,draw,fill,scale=\s](b9){};
    \draw(5.1*\x,3.2*\y) node[circle,draw,fill,scale=\s](b10){};
    \draw(5.5*\x,2.9*\y) node[circle,draw,fill,scale=\s](b11){};
    \draw(5.5*\x,2.3*\y) node[circle,draw,fill,scale=\s](b12){};
    \draw(5.2*\x,1.9*\y) node[circle,draw,fill,scale=\s](b13){};
    \draw[thick](a1) -- (b1) node[fill=white,inner sep=.6pt] at (1.7*\x,1.7*\y){\tiny $17$};
    \draw[thick](a2) -- (b1) node[fill=white,inner sep=.6pt] at (1.5*\x,2*\y){\tiny $18$};
    \draw[thick](a3) -- (b1) node[fill=white,inner sep=.6pt] at (1.7*\x,2.3*\y){\tiny $19$};
    \draw[thick](a4) -- (b2) node[fill=white,inner sep=.6pt] at (2.75*\x,1.6*\y){\tiny $15$};
    \draw[thick](a4) -- (b3) node[fill=white,inner sep=.6pt] at (3.25*\x,1.6*\y){\tiny $14$};
    \draw[thick](a4) -- (b4) node[fill=white,inner sep=.6pt] at (2.75*\x,2.4*\y){\tiny $20$};
    \draw[thick](a4) -- (b5) node[fill=white,inner sep=.6pt] at (3*\x,2.5*\y){\tiny $21$};
    \draw[thick](a4) -- (b6) node[fill=white,inner sep=.6pt] at (3.25*\x,2.4*\y){\tiny $22$};
    \draw[thick](b1) -- (a4) node[fill=white,inner sep=.6pt] at (2.5*\x,2*\y){\tiny $16$};
    \draw[very thick,red](a4) -- (b7) node[fill=white,inner sep=.6pt] at (3.75*\x,2*\y){\tiny $1$};
    \draw[thick](b6) -- (a5) node[fill=white,inner sep=.6pt] at (3.55*\x,3.2*\y){\tiny $23$};
    \draw[thick](b6) -- (a6) node[fill=white,inner sep=.6pt] at (3.8*\x,3.15*\y){\tiny $24$};
    \draw[thick](b6) -- (a7) node[fill=white,inner sep=.6pt] at (3.85*\x,2.9*\y){\tiny $25$};
    \draw[thick](b7) -- (a8) node[fill=white,inner sep=.6pt] at (4.4*\x,1.625*\y){\tiny $13$};
    \draw[thick](b7) -- (a9) node[fill=white,inner sep=.6pt] at (4.7*\x,1.75*\y){\tiny $12$};
    \draw[thick](b7) -- (a10) node[fill=white,inner sep=.6pt] at (4.7*\x,2.25*\y){\tiny $11$};
    \draw[thick](b8) -- (a10) node[fill=white,inner sep=.6pt] at (4.6*\x,2.5*\y){\tiny $2$};
    \draw[thick](b9) -- (a10) node[fill=white,inner sep=.6pt] at (4.75*\x,2.8*\y){\tiny $3$};
    \draw[thick](b10) -- (a10) node[fill=white,inner sep=.6pt] at (5*\x,2.85*\y){\tiny $4$};
    \draw[thick](b11) -- (a10) node[fill=white,inner sep=.6pt] at (5.2*\x,2.7*\y){\tiny $5$};
    \draw[thick](b12) -- (a10) node[fill=white,inner sep=.6pt] at (5.2*\x,2.4*\y){\tiny $9$};
    \draw[thick](b13) -- (a10) node[fill=white,inner sep=.6pt] at (5.05*\x,2.2*\y){\tiny $10$};
    \draw[thick](b11) -- (a11) node[fill=white,inner sep=.6pt] at (5.5*\x,3.25*\y){\tiny $6$};
    \draw[thick](b11) -- (a12) node[fill=white,inner sep=.6pt] at (5.8*\x,3.1*\y){\tiny $7$};
    \draw[thick](b11) -- (a13) node[fill=white,inner sep=.6pt] at (5.85*\x,2.85*\y){\tiny $8$};
  \end{tikzpicture}
  \hfill
  \vrule
  \hfill
  \begin{tikzpicture}
    \def\x{.5};
    \def\y{1};
    \def\s{.5};
    \draw(3.5*\x,7*\y) node[circle,fill,scale=\s](n1){};
    \draw(1*\x,6*\y) node[circle,fill,scale=\s](n2){};
    \draw(2*\x,6*\y) node[circle,fill,scale=\s](n3){};
    \draw(3*\x,6*\y) node[circle,fill,scale=\s](n4){};
    \draw(4*\x,6*\y) node[circle,fill,scale=\s](n5){};
    \draw(5*\x,6*\y) node[circle,fill,scale=\s](n6){};
    \draw(6*\x,6*\y) node[circle,fill,scale=\s](n7){};
    \draw(.5*\x,5*\y) node[circle,fill,scale=\s](n8){};
    \draw(1*\x,5*\y) node[circle,fill,scale=\s](n9){};
    \draw(1.5*\x,5*\y) node[circle,fill,scale=\s](n10){};
    \draw(3.5*\x,5*\y) node[circle,fill,scale=\s](n11){};
    \draw(4*\x,5*\y) node[circle,fill,scale=\s](n12){};
    \draw(4.5*\x,5*\y) node[circle,fill,scale=\s](n13){};
    \draw(n1) -- (n2);
    \draw(n1) -- (n3);
    \draw(n1) -- (n4);
    \draw(n1) -- (n5);
    \draw(n1) -- (n6);
    \draw(n1) -- (n7);
    \draw(n2) -- (n8);
    \draw(n2) -- (n9);
    \draw(n2) -- (n10);
    \draw(n5) -- (n11);
    \draw(n5) -- (n12);
    \draw(n5) -- (n13);
    \draw(3*\x,4*\y) node[circle,fill,scale=\s](m1){};
    \draw(1.25*\x,3*\y) node[circle,fill,scale=\s](m2){};
    \draw(3*\x,3*\y) node[circle,fill,scale=\s](m3){};
    \draw(4.75*\x,3*\y) node[circle,fill,scale=\s](m4){};
    \draw(0*\x,2*\y) node[circle,fill,scale=\s](m5){};
    \draw(.5*\x,2*\y) node[circle,fill,scale=\s](m6){};
    \draw(1*\x,2*\y) node[circle,fill,scale=\s](m7){};
    \draw(1.5*\x,2*\y) node[circle,fill,scale=\s](m8){};
    \draw(2*\x,2*\y) node[circle,fill,scale=\s](m9){};
    \draw(2.5*\x,2*\y) node[circle,fill,scale=\s](m10){};
    \draw(1*\x,1*\y) node[circle,fill,scale=\s](m11){};
    \draw(1.5*\x,1*\y) node[circle,fill,scale=\s](m12){};
    \draw(2*\x,1*\y) node[circle,fill,scale=\s](m13){};
    \draw(m1) -- (m2);
    \draw(m1) -- (m3);
    \draw(m1) -- (m4);
    \draw(m2) -- (m5);
    \draw(m2) -- (m6);
    \draw(m2) -- (m7);
    \draw(m2) -- (m8);
    \draw(m2) -- (m9);
    \draw(m2) -- (m10);
    \draw(m8) -- (m11);
    \draw(m8) -- (m12);
    \draw(m8) -- (m13);
  \end{tikzpicture}
  \hfill
  \vrule
  \hfill
  \begin{tikzpicture}
    \def\x{.5};
    \def\y{.67};
    \def\s{.5};
    \draw(2.5*\x,10*\y) node[circle,fill,scale=\s](n1){};
    \draw(1*\x,9*\y) node[circle,fill,scale=\s](n2){};
    \draw(2*\x,9*\y) node[circle,fill,scale=\s](n3){};
    \draw(3*\x,9*\y) node[circle,fill,scale=\s](n4){};
    \draw(4*\x,9*\y) node[circle,fill,scale=\s](n5){};
    \draw(.25*\x,8*\y) node[circle,fill,scale=\s](n6){};
    \draw(.75*\x,8*\y) node[circle,fill,scale=\s](n7){};
    \draw(1.25*\x,8*\y) node[circle,fill,scale=\s](n8){};
    \draw(1.75*\x,8*\y) node[circle,fill,scale=\s](n9){};
    \draw(3.25*\x,8*\y) node[circle,fill,scale=\s](n10){};
    \draw(3.75*\x,8*\y) node[circle,fill,scale=\s](n11){};
    \draw(4.25*\x,8*\y) node[circle,fill,scale=\s](n12){};
    \draw(4.75*\x,8*\y) node[circle,fill,scale=\s](n13){};
    \draw(2.5*\x,7*\y) node[circle,fill,scale=\s](n14){};
    \draw(3*\x,7*\y) node[circle,fill,scale=\s](n15){};
    \draw(3.5*\x,7*\y) node[circle,fill,scale=\s](n16){};
    \draw(4*\x,7*\y) node[circle,fill,scale=\s](n17){};
    \draw(n1) -- (n2);
    \draw(n1) -- (n3);
    \draw(n1) -- (n4);
    \draw(n1) -- (n5);
    \draw(n2) -- (n6);
    \draw(n2) -- (n7);
    \draw(n2) -- (n8);
    \draw(n2) -- (n9);
    \draw(n5) -- (n10);
    \draw(n5) -- (n11);
    \draw(n5) -- (n12);
    \draw(n5) -- (n13);
    \draw(n10) -- (n14);
    \draw(n10) -- (n15);
    \draw(n10) -- (n16);
    \draw(n10) -- (n17);
    \draw(2.5*\x,5*\y) node[circle,fill,scale=\s](m1){};
    \draw(1*\x,4*\y) node[circle,fill,scale=\s](m2){};
    \draw(2*\x,4*\y) node[circle,fill,scale=\s](m3){};
    \draw(3*\x,4*\y) node[circle,fill,scale=\s](m4){};
    \draw(4*\x,4*\y) node[circle,fill,scale=\s](m5){};
    \draw(.25*\x,3*\y) node[circle,fill,scale=\s](m6){};
    \draw(.75*\x,3*\y) node[circle,fill,scale=\s](m7){};
    \draw(1.25*\x,3*\y) node[circle,fill,scale=\s](m8){};
    \draw(1.75*\x,3*\y) node[circle,fill,scale=\s](m9){};
    \draw(1*\x,2*\y) node[circle,fill,scale=\s](m10){};
    \draw(1.5*\x,2*\y) node[circle,fill,scale=\s](m11){};
    \draw(2*\x,2*\y) node[circle,fill,scale=\s](m12){};
    \draw(2.5*\x,2*\y) node[circle,fill,scale=\s](m13){};
    \draw(.25*\x,1*\y) node[circle,fill,scale=\s](m14){};
    \draw(.75*\x,1*\y) node[circle,fill,scale=\s](m15){};
    \draw(1.25*\x,1*\y) node[circle,fill,scale=\s](m16){};
    \draw(1.75*\x,1*\y) node[circle,fill,scale=\s](m17){};
    \draw(m1) -- (m2);
    \draw(m1) -- (m3);
    \draw(m1) -- (m4);
    \draw(m1) -- (m5);
    \draw(m2) -- (m6);
    \draw(m2) -- (m7);
    \draw(m2) -- (m8);
    \draw(m2) -- (m9);
    \draw(m9) -- (m10);
    \draw(m9) -- (m11);
    \draw(m9) -- (m12);
    \draw(m9) -- (m13);
    \draw(m10) -- (m14);
    \draw(m10) -- (m15);
    \draw(m10) -- (m16);
    \draw(m10) -- (m17);
\end{tikzpicture}
  \caption{Illustration of the bijection from~\Cref{rem:1modk_bijection} for $n=8$, $k=3$, and $w=(1\;14\;15\;16\;20\;21\;22)(2\;3\;4\;5\;9\;10\;11)\in\nc_{25;3}$.  On the left is the plane, edge-rooted bicolored tree corresponding to $w\cdot\Krew(w)$, in the middle the pair of $3$-divisible trees with a total of $26$ vertices, and on the right the pair of $4$-ary trees with a total of $8$ non-leaf vertices.}
  \label{fig:goulden_jackson}
\end{figure}

\subsection{Multichains}
A \defn{$q$-multichain} in $\nc_{N;k}$ is a tuple $(w_{1},w_{2},\ldots,w_{q})\in\bigl(\GS_{N;k}\bigr)^{q}$ with $w_{1}\leqk w_{2}\leqk\cdots\leqk w_{q}\leqk c_N$.

\begin{lemma}\label{lem:multichain_factorization}
  Each $q$-multichain $(u_{1},u_{2},\ldots,u_{q})$ in $\nc_{N;k}$ corresponds bijectively to a factorization $v_{1}v_{2}\cdots v_{q+1}=c_N$ such that 
  \begin{displaymath}
    \ello(v_{1})+\ello(v_{2})+\cdots+\ello(v_{q+1}) = kn,
  \end{displaymath}
  and $v_{i}\in\GSo_{N;k}$ for $i\in[q+1]$.
\end{lemma}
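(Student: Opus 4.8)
The plan is to realize the bijection through the familiar dictionary between multichains in an absolute order and length-additive factorizations, taking care that every factor lands in $\GSo_{N;k}$. Given a $q$-multichain $(u_1,u_2,\ldots,u_q)$, I would send it to the tuple $(v_1,\ldots,v_{q+1})$ defined by $v_1=u_1$, $v_i=u_{i-1}^{-1}u_i$ for $2\le i\le q$, and $v_{q+1}=u_q^{-1}c_N=\Krew(u_q)$; these telescope to $v_1v_2\cdots v_{q+1}=c_N$. In the other direction I would send a factorization $v_1v_2\cdots v_{q+1}=c_N$ to the tuple of partial products $w_i\defs v_1v_2\cdots v_i$ for $i\in[q]$. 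As set maps these are patently mutually inverse, so the whole content is to check that they restrict to the two prescribed sets.

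For the forward direction, I would start from $c_N\in\GSo_{N;k}$ and walk down the chain applying \Cref{cor:lower_ideal} repeatedly: $u_q\leqk c_N$ gives $u_q\in\GSo_{N;k}$ and $v_{q+1}=u_q^{-1}c_N\in\GSo_{N;k}$; then $u_{q-1}\leqk u_q$ gives $u_{q-1}\in\GSo_{N;k}$ and $v_q\in\GSo_{N;k}$; and so on down to $v_1=u_1$. The length condition is then a telescoping of the defining equalities of $\leqk$: summing $\ellk(u_{i-1})+\ellk(v_i)=\ellk(u_i)$ over the chain yields $\sum_{i=1}^{q+1}\ellk(v_i)=\ellk(c_N)=n$, and since every $v_i$ is $1\bmod k$, \Cref{lem:good_cycles_length} turns each $\ellk(v_i)$ into $\ello(v_i)/k$, giving $\sum_i\ello(v_i)=kn$.

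For the converse I would first upgrade the single tight identity $\sum_i\ello(v_i)=kn=\ello(c_N)$ to full length-additivity using subadditivity of $\ello$: the chain of inequalities $\ello(c_N)\le\ello(v_1)+\ello(v_2\cdots v_{q+1})\le\cdots\le\sum_i\ello(v_i)=\ello(c_N)$ collapses, so $\ello(w_i)=\sum_{j\le i}\ello(v_j)$ and likewise for the complementary products; in particular $w_i\leqo c_N$ and $w_i\leqo w_{i+1}$. The crux---and the step I expect to be the real obstacle---is to promote each partial product $w_i$ into $\GSo_{N;k}$, since being $1\bmod k$ is not preserved by arbitrary products. Here I would use a two-sided squeeze on $\ellk(w_i)$: the general bound $\ellk\ge\ello/k$ recorded in the proof of \Cref{lem:good_cycles_length} gives $\ellk(w_i)\ge\ello(w_i)/k$, while the triangle inequality for $\ellk$ together with $\ellk(v_j)=\ello(v_j)/k$ gives $\ellk(w_i)\le\sum_{j\le i}\ellk(v_j)=\ello(w_i)/k$. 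Equality forces $\ellk(w_i)=\ello(w_i)/k$, so $w_i\in\GSo_{N;k}$ by \Cref{lem:good_cycles_length}; the same argument applied to $\Krew(w_i)=v_{i+1}\cdots v_{q+1}$ shows $\Krew(w_i)$ is $1\bmod k$ as well.

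Finally I would assemble these facts: $w_i\leqo c_N$ with both $w_i$ and $\Krew(w_i)$ being $1\bmod k$ places $w_i$ in $\nc_{N;k}$ by criterion $\mathrm{(ii)}$ of \Cref{thm:k_indivisible_equivalent}, and then \Cref{cor:induced_subposet} upgrades each relation $w_i\leqo w_{i+1}$ (and $w_q\leqo c_N$) to $w_i\leqk w_{i+1}$, exhibiting $(w_1,\ldots,w_q)$ as a genuine $q$-multichain in $\nc_{N;k}$. This closes the loop and shows the two maps are the claimed mutually inverse bijections.
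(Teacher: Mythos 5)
Your proposal is correct, and the bijection itself is exactly the paper's: the same telescoping maps $(u_i)\mapsto(u_{i-1}^{-1}u_i)$ and $(v_j)\mapsto(v_1\cdots v_i)$, with the forward direction verified just as in the paper via \Cref{cor:lower_ideal} and \Cref{lem:good_cycles_length}. The difference is in the converse, which the paper dismisses as ``easily checked'': you supply a genuine argument, and a correct one --- the collapse of the subadditivity chain for $\ello$, the two-sided squeeze $\ello(w_i)/k\leq\ellk(w_i)\leq\sum_{j\leq i}\ellk(v_j)=\ello(w_i)/k$ forcing $w_i\in\GSo_{N;k}$, and then criterion (ii) of \Cref{thm:k_indivisible_equivalent} plus \Cref{cor:induced_subposet} to land in $\pnc_{N;k}$ with the right order relations. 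This is sound (and there is no circularity, since both cited results are established before the lemma). It is, however, a detour: the same squeeze runs entirely inside $\ellk$ and gives the conclusion directly from the definition of $\leqk$. Indeed, since each $v_j\in\GSo_{N;k}$, \Cref{lem:good_cycles_length} turns the hypothesis into $\sum_j\ellk(v_j)=n=\ellk(c_N)$; then subadditivity gives $\ellk(w_{i})\leq\sum_{j\leq i}\ellk(v_j)$ while $\ellk(c_N)\leq\ellk(w_{i})+\sum_{j>i}\ellk(v_j)$ gives the reverse bound, so $\ellk(w_i)=\sum_{j\leq i}\ellk(v_j)$ for every $i$, and comparing consecutive values forces $\ellk(w_i)+\ellk(v_{i+1})=\ellk(w_{i+1})$, i.e.\ $w_i\leqk w_{i+1}\leqk c_N$ --- no appeal to the noncrossing characterization or to the induced-subposet corollary is needed. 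What your route buys instead is the extra information that each $w_i$ and each $\Krew(w_i)$ is $1\bmod k$, which the direct route only recovers afterwards via \Cref{cor:lower_ideal}.
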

\begin{proof}
  Let $u_{0}=\id$ and $u_{q+1}=c_N$, and define $v_{i}=u_{i-1}^{-1}u_{i}$ for $i\in[q+1]$.  We immediately see that $v_{1}v_{2}\cdots v_{q+1}=c_N$.  Moreover, since $u_{i}\leqk u_{i+1}$ we conclude from the definition that $\ellk(v_{i+1})=\ellk(u_{i+1})-\ellk(u_{i})$.  We obtain
  \begin{displaymath}
    \sum_{i=1}^{q+1}{\ellk(v_{i})} = \sum_{i=1}^{q+1}{\bigl(\ellk(u_{i})-\ellk(u_{i-1})\bigr)} = \ellk(u_{q+1})-\ellk(u_{0}) = \ellk(c_N)-\ellk(\id) = n.
  \end{displaymath}
  We conclude from \Cref{cor:lower_ideal} that $v_{i}\in\GSo_{N;k}$ and the final claim follows then from \Cref{lem:good_cycles_length}. Conversely, given a factorization $v_{1}v_{2}\cdots v_{q+1}=c_N$, it is easily checked that setting $u_m:=v_1\cdots v_m$ for $m\in [q]$ gives the desired $q$-multichain $(u_{1},u_{2},\ldots,u_{q})$.
\end{proof}

Let $C=(w_{1},w_{2},\ldots,w_{q})$ be a $q$-multichain in $\pnc_{N;k}$, and let $w_{0}=\id$, $w_{q+1}=c_N$.  We define the \defn{rank jump vector} of $C$ by $r(C)\defs(r_{1},r_{2},\ldots,r_{q+1})$, where $r_{i}=\ellk(w_{i})-\ellk(w_{i-1})$ for $i\in[q+1]$.  We write $\ZetaPol_{N;k}(q+1)$ for the number of $q$-multichains of $\pnc_{N;k}$.

\begin{theorem}\label{thm:1modk_rank_enumeration}
  The number of $q$-multichains of $\pnc_{N;k}$ that have the rank jump vector $(r_{1},r_{2},\ldots,r_{q+1})$ is
  \begin{displaymath}
    \frac{1}{N}\prod_{i=1}^{q+1}{\ran(r_{i},1-k,N)} = \frac{1}{N}\prod_{i=1}^{q+1}{\frac{N}{N-(k-1)r_{i}}\binom{N-(k-1)r_{i}}{r_{i}}}.
  \end{displaymath}
\end{theorem}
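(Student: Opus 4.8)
The plan is to reduce to a factorization count, encode the factorizations as planar colored trees (cacti) generalizing the bijection already used for \Cref{rem:1modk_bijection}, and then enumerate these by a cycle-lemma argument that produces the product form with its single global factor $\tfrac1N$.

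First I would apply \Cref{lem:multichain_factorization}: a $q$-multichain with rank jump vector $(r_{1},\dots,r_{q+1})$ corresponds bijectively to a factorization $c_N = v_{1}v_{2}\cdots v_{q+1}$ with each $v_{i}\in\GSo_{N;k}$ and $\sum_{i}\ellk(v_{i})=n$. By \Cref{lem:good_cycles_length} the condition $\ellk(v_{i})=r_{i}$ is exactly $\cyc(v_{i})=N-kr_{i}=:c_{i}$, so that $\sum_{i}c_{i}=(q+1)N-kn=qN+1$. Hence the task becomes: count \emph{minimal transitive} factorizations of the $N$-cycle into $q+1$ factors, each a $(1\bmod k)$-permutation, where the $i$-th factor has exactly $c_{i}$ cycles.

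Next I would extend the Goulden--Jackson encoding \cite{goulden92combinatorial} used in the $q=1$ case (two colors, $v_{1}=w$ and $v_{2}=\Krew(w)$) to all $q+1$ factors. Such a minimal factorization is encoded by a planar $(q+1)$-colored cactus: the color-$i$ vertices are the cycles of $v_{i}$, each of degree equal to its cycle length and therefore $\equiv 1\pmod{k}$; there are $c_{i}$ vertices of color $i$ and $N$ edges in total. Minimality is equivalent to planarity (genus $0$), which is consistent with $\sum_{i}c_{i}=qN+1$ by the Euler relation for constellations. The $(1\bmod k)$ degree constraint can be absorbed by the $(k{+}1)$-ary reencoding of \Cref{rem:tree_bijection}, replacing each color-$i$ vertex of degree $ks_j+1$ by a $(k{+}1)$-ary gadget of $s_j$ internal nodes.

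Finally, I would count these cacti by the color composition $(c_{1},\dots,c_{q+1})$ via the cycle lemma (equivalently, multivariate Lagrange inversion in the spirit of the Raney numbers). The expectation is that each color $i$ contributes
\[
  \ran(r_{i},1-k,N)=\frac{N}{c_{i}+r_{i}}\binom{c_{i}+r_{i}}{r_{i}}=\frac{N}{c_{i}}\binom{c_{i}+r_{i}-1}{r_{i}},
\]
where $\binom{c_{i}+r_{i}-1}{r_{i}}$ records the choice of the $c_{i}$ cycle lengths $ks_{j}+1$ of $v_{i}$ with $\sum_{j}s_{j}=r_{i}$ (the weak compositions of $r_{i}$ into $c_{i}$ parts), while the single overall factor $\tfrac1N$ arises from rooting, i.e.\ from the $N$-fold rotational symmetry of the walk around the structure. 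The main obstacle is exactly this last enumeration: making the colored-cactus count rigorous so that it factors as a product over the colors with precisely one global $\tfrac1N$, and identifying the per-color contribution with the Raney number having second parameter $1-k$ (rather than the $k+1$ that governs each individual tree). A viable alternative for this step is a recursion: peel off $v_{1}$, use $v_{2}\cdots v_{q+1}=\Krew(v_{1})$ together with the fact that this factorization splits independently over the noncrossing blocks of $\Krew(v_{1})$, and solve the resulting relation by \Cref{lem:raney_recursion} and Lagrange inversion. As a consistency check, summing the claimed count over all rank jump vectors with $\sum_{i}r_{i}=n$ must reproduce $\ZetaPol_{N;k}(q+1)$ of \Cref{thm:1modk_zeta} and, for $q=1$, the cardinality of \Cref{rem:1modk_bijection}.
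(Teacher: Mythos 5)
Your reduction is exactly the paper's: via \Cref{lem:multichain_factorization} and \Cref{lem:good_cycles_length}, multichains with rank jump vector $(r_1,\ldots,r_{q+1})$ become factorizations $c_N=v_1v_2\cdots v_{q+1}$ with $v_i\in\GSo_{N;k}$ and $\cyc(v_i)=N-kr_i$ (call this $c_i$), and your bookkeeping is all correct: $\sum_i c_i=qN+1$ matches the genus-zero Euler relation, $\ran(r_i,1-k,N)=\tfrac{N}{c_i}\binom{c_i+r_i-1}{r_i}$, and $\binom{c_i+r_i-1}{r_i}$ is indeed the number of weak compositions of $r_i$ into $c_i$ parts, that is, of choices of the cycle lengths of $v_i$. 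Where you diverge is the key enumerative step. The paper does not prove the factorization count at all; it cites \cite{krattenthaler10decomposition}*{Theorem~5}, which gives the number of factorizations in which $v_i$ has exactly $p_j^{(i)}$ cycles of length $kj+1$ as
\begin{displaymath}
  N^{q}\prod_{i=1}^{q+1}\frac{1}{c_i}\binom{c_i}{p_1^{(i)},p_2^{(i)},\ldots},
\end{displaymath}
and then invokes \cite{krattenthaler10decomposition}*{Lemma~4} to carry out exactly the sum over cycle types that you absorb into the weak-composition count. You propose instead to prove this from scratch by encoding the factorizations as planar $(q{+}1)$-colored cacti and counting them with the cycle lemma or multivariate Lagrange inversion.

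That route is viable---it is essentially how such genus-zero factorization formulas are established in the literature, going back to \cite{goulden1994symmetrical} and to the two-color case \cite{goulden92combinatorial} already used for \Cref{rem:1modk_bijection}---but your proposal does not carry it out. The step you yourself flag as ``the main obstacle,'' namely that the colored-cactus count factors over the colors with a single global $\tfrac{1}{N}$ and per-color factor $\ran(r_i,1-k,N)$, is precisely the content of the two cited results; it is the heart of the theorem, and neither the cactus sketch nor your alternative recursion (peeling off $v_1$ and splitting the remaining factorization over the blocks of $\Krew(v_1)$) is developed far enough to yield it. So nothing in your outline is wrong, and every formula manipulation checks out, but to complete the proof you must either execute the constellation enumeration in full (standard, but not short), or recognize the required count as the known formula for minimal factorizations of a long cycle with prescribed cycle types and cite it---which is exactly what the paper does.
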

\begin{proof}
  Let $C=(w_1,w_2,\ldots,w_{q})$ be a $q$-multichain with rank jump vector $r(C)=(r_1,r_2,\ldots,r_{q+1})$, where $r_i=\ellk(w_i)-\ellk(w_{i-1})$.  By \Cref{lem:multichain_factorization}, $C$ corresponds to a factorization $v_1v_2\cdots v_{q+1}=c_N$, where $v_i\in \GSo_{N;k}$ and $r_i=\ellk(v_i)$ for $i\in[q+1]$.  By ~\Cref{lem:good_cycles_length} we have $r_i=\ello(v_i)/k$.  If we suppose that $v_i$ has exactly $p_j^{(i)}$ cycles of size $kj+1$, for $j\geq 1$, then \cite{krattenthaler10decomposition}*{Theorem~5} implies that the number of factorizations is 
  \begin{displaymath}
    N^{q}\prod_{i=1}^{q+1}{\frac{1}{kn-kr_i+1}\binom{kn-kr_i+1}{p_1^{(i)},p_2^{(i)},\dots}},
  \end{displaymath}
  where $r_i = \sum_{j}{jp_{j}^{(i)}}$.  We now sum over all such sequences $(p_1^{(i)},p_2^{(i)},\ldots)$ by using \cite{krattenthaler10decomposition}*{Lemma~4} and find that the number of all such factorizations is
  \begin{displaymath}
    N^{q}\prod_{i=1}^{q+1}{\frac{1}{kn-kr_i+1}\binom{kn-(k-1)r_i}{r_i}} = \frac{1}{N}\prod_{i=1}^{q+1}{\frac{N}{N-kr_i}\binom{N-1-(k-1)r_i}{r_i}}.
  \end{displaymath}
  This formula is equivalent to the formula in the statement.
\end{proof}

\begin{corollary}\label{cor:max_chains}
  The number of maximal chains of $\pnc_{N;k}$ is $N^{n-1}$, and the number of elements of $\pnc_{N;k}$ of rank $l$ is
  \begin{displaymath}
    \frac{N}{\bigl(N-(k-1)l\bigr)\bigl(N-(k-1)(n-l)\bigr)}\binom{N-(k-1)l}{l}\binom{N-(k-1)(n-l)}{n-l}.
  \end{displaymath}
\end{corollary}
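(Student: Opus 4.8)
The plan is to obtain both statements as immediate specializations of \Cref{thm:1modk_rank_enumeration}, which already enumerates $q$-multichains by their rank jump vector; the only work is to identify maximal chains and fixed-rank elements as suitable families of multichains. Throughout I use that $\ellk$ is a rank function on $\pnc_{N;k}$ with $\ellk(\id)=0$ and $\ellk(c_N)=n$: by \Cref{cor:covering_relations} each cover cuts one cycle into $k+1$ pieces, which by \Cref{lem:good_cycles_length} drops $\ellk$ by exactly $1$.

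For the maximal chains, I would note that a maximal chain $\id\lessdot_{k}w_1\lessdot_{k}\cdots\lessdot_{k}w_{n-1}\lessdot_{k}c_N$ is recorded by its $n-1$ interior elements, hence is an $(n-1)$-multichain, and its rank jump vector has $n$ entries each equal to $1$. Conversely any $(n-1)$-multichain whose $n$ rank jumps are all $1$ is a maximal chain, since those jumps are nonnegative integers summing to $\ellk(c_N)=n$. Setting $r_i=1$ in \Cref{thm:1modk_rank_enumeration} and using $\ran(1,1-k,N)=\frac{N}{N-(k-1)}\binom{N-(k-1)}{1}=N$, the count becomes $\frac{1}{N}\prod_{i=1}^{n}N=N^{n-1}$.

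For the number of elements of rank $l$, I would specialize to $q=1$. A $1$-multichain is a single element $w$ with $\id\leqk w\leqk c_N$, and $w$ has rank $l$ precisely when $\ellk(w)=l$; its rank jump vector is then $(r_1,r_2)=(l,\,n-l)$. Substituting these two values into \Cref{thm:1modk_rank_enumeration} and collapsing the prefactor $\frac{1}{N}\cdot N\cdot N$ to $N$ gives exactly the claimed product of the two Raney-type binomials.

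Since everything reduces to plugging constants into an already-proven formula, there is no genuine obstacle here; the only point needing a moment of care is the first equivalence, namely that maximal chains are exactly the multichains with all rank jumps equal to $1$, which is where the gradedness of $\pnc_{N;k}$ is used.
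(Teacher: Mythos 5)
Your proposal is correct and is essentially the paper's own proof: both identify maximal chains with $(n{-}1)$-multichains of rank jump vector $(1,1,\ldots,1)$ and rank-$l$ elements with $1$-multichains of rank jump vector $(l,n{-}l)$, then specialize \Cref{thm:1modk_rank_enumeration}. Your extra care in justifying the chain--multichain correspondence via gradedness (each cover dropping $\ellk$ by exactly $1$, from \Cref{cor:covering_relations} and \Cref{lem:good_cycles_length}) is a detail the paper leaves implicit but changes nothing of substance.
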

\begin{proof}
  Maximal chains of $\pnc_{N;k}$ correspond by definition to $(n-1)$-multichains with rank jump vector $(1,1,\ldots,1)$, while elements of rank $l$ correspond to $1$-multichains with rank jump vector $(l,n-l)$.  The result now follows from \Cref{thm:1modk_rank_enumeration}.
\end{proof}

\begin{remark}
  The result on the number of maximal chains of $\pnc_{N;k}$ has been obtained before by Goulden and Jackson in \cite{goulden1994symmetrical}*{Corollary~5.1}, and was later extended by Biane in \cite{biane96minimal}*{Theorem~1}.
\end{remark}

\subsection{Zeta polynomial and M\"obius function}
We may now conclude \Cref{thm:1modk_zeta}.

{
\renewcommand{\thetheorem}{\ref{thm:1modk_zeta}}
\begin{theorem}
  For $k,n\geq 1$, the number of $q$-multichains of $\pnc_{N;k}$ is 
  \begin{displaymath}
    \ZetaPol_{N;k}(q+1) =\frac{q+1}{Nq+1}\binom{Nq+n}{n}.
  \end{displaymath}
\end{theorem}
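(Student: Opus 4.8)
The plan is to obtain $\ZetaPol_{N;k}(q+1)$ by summing the rank-refined count of \Cref{thm:1modk_rank_enumeration} over all admissible rank jump vectors, and then to collapse the resulting convolution of Raney numbers with \Cref{lem:raney_recursion}. First I would record that every $q$-multichain has a rank jump vector $(r_{1},\dots,r_{q+1})$ with $r_{i}\ge 0$ and $\sum_{i=1}^{q+1}r_{i}=n$ (the parts may vanish, since a multichain permits repeats), and conversely that each such weak composition of $n$ into $q+1$ parts arises. Summing the formula of \Cref{thm:1modk_rank_enumeration} over these vectors then gives
\[
  \ZetaPol_{N;k}(q+1) = \frac{1}{N}\sum_{\substack{r_{1}+\cdots+r_{q+1}=n\\ r_{i}\ge 0}}\ \prod_{i=1}^{q+1}\ran(r_{i},1-k,N).
\]

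The right-hand sum is precisely the $(q{+}1)$-fold additive convolution of the sequence $r\mapsto\ran(r,1-k,N)$ with itself. Iterating \Cref{lem:raney_recursion} one factor at a time, so that the third argument accumulates as $N,2N,3N,\dots$, I would reduce this to a single Raney number,
\[
  \sum_{r_{1}+\cdots+r_{q+1}=n}\prod_{i=1}^{q+1}\ran(r_{i},1-k,N) = \ran\bigl(n,1-k,(q+1)N\bigr),
\]
yielding $\ZetaPol_{N;k}(q+1)=\tfrac{1}{N}\ran\bigl(n,1-k,(q+1)N\bigr)$. Substituting $N=kn+1$, the relevant denominator simplifies as $n(1-k)+(q+1)N = qN+n+1$, so that
\[
  \frac{1}{N}\ran\bigl(n,1-k,(q+1)N\bigr) = \frac{1}{N}\cdot\frac{(q+1)N}{qN+n+1}\binom{qN+n+1}{n} = \frac{q+1}{qN+n+1}\binom{qN+n+1}{n}.
\]
A final elementary identity, $\frac{1}{m+n+1}\binom{m+n+1}{n}=\frac{1}{m+1}\binom{m+n}{n}$ with $m=qN$, rewrites this as $\frac{q+1}{Nq+1}\binom{Nq+n}{n}$, as claimed.

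The one genuinely delicate point, and the step I would justify most carefully, is that \Cref{lem:raney_recursion} is stated only for $p,r,s\ge 1$, whereas I invoke it at $p=1-k\le 0$. This I would resolve by polynomiality: for each fixed $n\ge 1$ one has $\ran(n,p,r)=\frac{r}{n!}\prod_{j=1}^{n-1}(np+r-j)$ (and $\ran(0,p,r)=1$), a polynomial in $(p,r)$ with no poles. Hence both sides of the identity in \Cref{lem:raney_recursion} are polynomials in $(p,r,s)$ agreeing on all positive integers, so they agree identically, in particular at $p=1-k$. I would also note in passing that the auxiliary quantities $N-(k-1)r_{i}$ remain positive, since $r_{i}\le n$ and $(k-1)n<N$, so no binomial coefficient or denominator appearing along the way is problematic. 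Everything apart from this extension of the recurrence past its stated range is routine bookkeeping, so I expect that justification to be the only real obstacle.
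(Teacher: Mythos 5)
Your proposal is correct and takes essentially the same approach as the paper: both sum the formula of \Cref{thm:1modk_rank_enumeration} over all rank jump vectors and collapse the resulting convolution of Raney numbers to $\tfrac{1}{N}\ran\bigl(n,1-k,(q+1)N\bigr)$, the only difference being that the paper invokes the multi-factor convolution identity of \cite{muehle18poset}*{Lemma~5.5} (stated for arbitrary integer second parameter, so the out-of-range issue you flag never arises there), whereas you iterate \Cref{lem:raney_recursion} and justify its extension to $p=1-k\leq 0$ by a valid polynomiality argument. Your closing algebra is simply a direct verification of the paper's chain $\tfrac{1}{N}\ran\bigl(n,1-k,(q+1)N\bigr)=\ran(n,qk+1,q+1)=\tfrac{q+1}{Nq+1}\binom{Nq+n}{n}$.
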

\addtocounter{theorem}{-1}
}

\begin{proof}
   In order to determine $\ZetaPol_{N;k}(q+1)$, we have to sum the formula from \Cref{thm:1modk_rank_enumeration} over all possible rank jump vectors.  Recall from \cite{muehle18poset}*{Lemma~5.5} that for integers $a,a_{1},a_{2},\ldots,a_{r},b,n$ with $a=a_{1}+a_{2}+\cdots+a_{r}$ we have
  \begin{displaymath}
    \sum_{n_{1}+n_{2}+\cdots+n_{r}=n}\prod_{i=1}^{r}{\ran(n_{i},b,a_{i})} = \ran(n,b,a).
  \end{displaymath}
  We obtain
  \begin{align*}
    \ZetaPol_{N;k}(q+1) & = \sum_{r_{1}+r_{2}+\cdots+r_{q+1}=n}{\frac{1}{N}\prod_{i=1}^{q+1}{\ran(r_{i},1-k,N)}}\\
    & = \frac{1}{N}\left(\sum_{r_{1}+r_{2}+\cdots+r_{q+1}=n}{\prod_{i=1}^{q+1}{\ran(r_{i},1-k,N)}}\right)\\
    & = \frac{\ran(n,1-k,(q+1)N)}{N}\\
    & = \ran\bigl(n,qk+1,q+1\bigr)\\
    & = \frac{q+1}{Nq+1}\binom{Nq+n}{n}.\qedhere
  \end{align*}
\end{proof}

Specializing \Cref{thm:1modk_zeta} at $q=1$ gives a second (non-bijective) proof of~\Cref{rem:1modk_bijection}.
\begin{corollary}\label{cor:1modk_bijection}
  The cardinality of $\nc_{N;k}$ is $\ran(n,k+1,2)$.
\end{corollary}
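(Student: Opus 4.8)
The plan is to observe that the corollary is an immediate specialization of \Cref{thm:1modk_zeta} at $q=1$, requiring only one conceptual identification together with one routine algebraic check. First I would note that a $1$-multichain in $\pnc_{N;k}$ is, by the definition of a $q$-multichain, simply a tuple $(w_1)$ with $w_1\leqk c_N$---that is, precisely an element of $\nc_{N;k}$. Hence the number of $1$-multichains equals $\lvert\nc_{N;k}\rvert$, and since $\ZetaPol_{N;k}(q+1)$ counts the $q$-multichains of $\pnc_{N;k}$, taking $q=1$ yields $\lvert\nc_{N;k}\rvert=\ZetaPol_{N;k}(2)$.

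Next I would substitute $q=1$ into the closed form of \Cref{thm:1modk_zeta}, obtaining
\[
  \ZetaPol_{N;k}(2)=\frac{1+1}{N\cdot 1+1}\binom{N\cdot 1+n}{n}=\frac{2}{N+1}\binom{N+n}{n}.
\]
This already agrees with the expression appearing in the statement of \Cref{rem:1modk_bijection}, so the only remaining task is to confirm that this quantity equals the Raney number $\ran(n,k+1,2)$.

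For that final step I would unwind the definition $\ran(n,p,r)=\frac{r}{np+r}\binom{np+r}{n}$ with $p=k+1$ and $r=2$, using the standing relation $N=kn+1$. This gives $n(k+1)+2=(nk+1)+n+1=N+n+1$, so that
\[
  \ran(n,k+1,2)=\frac{2}{N+n+1}\binom{N+n+1}{n},
\]
and a one-line factorial manipulation---rewriting $\binom{N+n+1}{n}=\frac{(N+n+1)!}{n!\,(N+1)!}$ and cancelling the factor $N+n+1$---reduces this to $\frac{2}{N+1}\binom{N+n}{n}$, matching the formula above.

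There is essentially no obstacle here: all of the substantive work is carried by \Cref{thm:1modk_zeta} (which in turn rests on \Cref{thm:1modk_rank_enumeration} and the Raney convolution identity). The present corollary is a purely bookkeeping specialization, and the only point demanding any care is recognizing that $\frac{2}{N+1}\binom{N+n}{n}$ and $\ran(n,k+1,2)$ are two notations for the same number once $N=kn+1$ is invoked.
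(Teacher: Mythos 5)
Your proposal is correct and matches the paper's own proof: the paper likewise identifies elements of $\nc_{N;k}$ with $1$-multichains of $\pnc_{N;k}$ and specializes \Cref{thm:1modk_zeta} at $q=1$. Your explicit verification that $\frac{2}{N+1}\binom{N+n}{n}=\ran(n,k+1,2)$ under $N=kn+1$ is a harmless addition that the paper leaves implicit.
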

\begin{proof}
 Every element of $\nc_{N;k}$ can be regarded as a $1$-multichain of $\pnc_{N;k}$.  The claim thus follows by plugging in $q=1$ into \Cref{thm:1modk_zeta}.
\end{proof}

Since $\pnc_{N;k}$ is a poset with a least and a greatest element, we can define the \defn{M{\"o}bius invariant} of $\pnc_{N;k}$; which is the value $\mu(\pnc_{N;k})$ of the M{\"o}bius function of $\pnc_{N;k}$ applied to $\id$ and $c_N$.  See also \cite{stanley11enumerative_vol1}*{Sections~3.8~and~3.12}.

\begin{corollary}\label{cor:1modk_mobius}
  The M{\"o}bius invariant of $\pnc_{N;k}$ is
  \begin{displaymath}
    \mu(\pnc_{N;k}) = (-1)^{n}\ran(n,2k,1) = \frac{(-1)^{n}}{2nk+1}\binom{2nk+1}{n}.
  \end{displaymath}
\end{corollary}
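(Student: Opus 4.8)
The plan is to recognize $\ZetaPol_{N;k}$ as the zeta polynomial of the bounded poset $\pnc_{N;k}$ and to recover the Möbius invariant as its value at $-1$, a standard instance of order-polynomial reciprocity. Since $\nc_{N;k}=\{w : w\leqk c_N\}$ has $\id$ as its least and $c_N$ as its greatest element, $\mu(\pnc_{N;k})=\mu(\id,c_N)$ is well defined. Moreover, $\ZetaPol_{N;k}(q+1)$ counts the $q$-multichains of $\pnc_{N;k}$, which by the reindexing in the proof of \Cref{lem:multichain_factorization} (appending $u_0=\id$ and $u_{q+1}=c_N$) are exactly the multichains $\id=u_0\leqk u_1\leqk\cdots\leqk u_{q+1}=c_N$. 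Thus $\ZetaPol_{N;k}$ is precisely the zeta polynomial of $\pnc_{N;k}$ in the sense of \cite{stanley11enumerative_vol1}*{Section~3.12}.

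First I would record that $\ZetaPol_{N;k}(q+1)$ is an honest polynomial in $q$: in $\frac{q+1}{Nq+1}\binom{Nq+n}{n}$ the denominator factor $Nq+1$ cancels the smallest factor in the numerator of $\binom{Nq+n}{n}$, leaving a polynomial of degree $n$. This legitimizes evaluating the formula of \Cref{thm:1modk_zeta} at a negative integer. Next I would invoke the standard reciprocity $\mu(P)=\ZetaPol(P,-1)$ for a finite bounded poset $P$ \cite{stanley11enumerative_vol1}*{Section~3.12}. Because the argument of $\ZetaPol_{N;k}$ is $q+1$, setting it equal to $-1$ means substituting $q=-2$, so that
\[
  \mu(\pnc_{N;k}) = \ZetaPol_{N;k}(-1) = \frac{-1}{1-2N}\binom{n-2N}{n}.
\]

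It remains only to simplify. Writing $M=2N-n$ (which is positive since $2N-n=2kn+2-n\geq n+2>0$) and applying the upper-negation identity $\binom{-M}{n}=(-1)^n\binom{M+n-1}{n}$ turns $\binom{n-2N}{n}$ into $(-1)^n\binom{2N-1}{n}$. Combining this with $\frac{-1}{1-2N}=\frac{1}{2N-1}$ and the identity $2N-1=2kn+1$ yields
\[
  \mu(\pnc_{N;k}) = \frac{(-1)^n}{2kn+1}\binom{2kn+1}{n} = (-1)^n\ran(n,2k,1),
\]
as claimed.

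The only real difficulty is conceptual bookkeeping: one must carefully align the paper's convention for $q$-multichains (which suppresses the fixed bounds $\id$ and $c_N$) with the reciprocity convention, so that the correct evaluation point of the polynomial in \Cref{thm:1modk_zeta} is $q+1=-1$ rather than $q=-1$. Once this is fixed, everything else is a routine binomial manipulation. As a consistency check I would verify the $k=1$ specialization, where $\pnc_{n+1;1}$ is the classical noncrossing partition lattice: the formula returns $\mu(\pnc_{n+1;1})=(-1)^n\frac{1}{n+1}\binom{2n}{n}$, the expected signed Catalan number.
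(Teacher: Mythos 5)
Your proof is correct and follows essentially the same route as the paper: both recognize $\ZetaPol_{N;k}$ as a polynomial, invoke Stanley's reciprocity (\cite{stanley11enumerative_vol1}*{Proposition~3.12.1(c)}) to get $\mu(\pnc_{N;k})=\ZetaPol_{N;k}(-1)$, evaluate the formula of \Cref{thm:1modk_zeta} at $q=-2$, and simplify with the upper-negation identity $\binom{-a}{b}=(-1)^{b}\binom{a+b-1}{b}$. Your treatment is simply more explicit about the bookkeeping (polynomiality of $\frac{q+1}{Nq+1}\binom{Nq+n}{n}$ and the substitution $q+1=-1$), which the paper leaves implicit.
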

\begin{proof}
  The numbers $\ZetaPol_{N;k}(q)$ can be regarded as evaluations of a polynomial over the integers.  It follows for instance from \cite{stanley11enumerative_vol1}*{Proposition~3.12.1(c)} that $\mu(\pnc_{N;k})=\ZetaPol_{N;k}(-1)$.  The claim follows from application of \Cref{thm:1modk_zeta}, by using the equality $\binom{-a}{b}=(-1)^{b}\binom{a+b-1}{b}$ for positive integers $a,b$.
\end{proof}

\subsection{$m$-Divisible $k$-indivisible noncrossing partitions}

In the spirit of \cites{armstrong09generalized,edelman80chain} we define a partial order on the set of multichains of $\pnc_{N;k}$.  For an $m$-multichain $C=(x_{1},x_{2},\ldots,x_{m})$ of $\pnc_{N;k}$ we define the \defn{delta sequence} $\delta_{o}(C)=(d_{0};d_{1},\ldots,d_{m})$, where $d_{i}=x_{i}^{-1}x_{i+1}$ for $0\leq i\leq m$, and where we denote by $x_{0}$ the identity and by $x_{m+1}$ the long cycle $c_N$.

For two such multichains $C,C'$ with $\delta_{o}(C)=(d_{0};d_{1},\ldots,d_{m})$ and $\delta_{o}(C')=(d'_{0};d'_{1},\ldots,d'_{m})$ set $C\leq_{\mathbf{k}} C'$ if and only if $d_{i}\geq_k d'_{i}$ for  $1\leq i\leq m$.  Let $\pnc_{N;k}^{(m)}$ denote the corresponding poset.

An earlier version of this article contained Corollaries~\ref{cor:1modk_zeta_mdivisible}--\ref{cor:1modk_mobius_mdivisible} as conjectures.  C.~Krattenthaler has suggested the following generalization of \Cref{thm:1modk_rank_enumeration} to us.

\begin{theorem}[C.~Krattenthaler]\label{thm:1modk_rank_enumeration_mdivisible}
	The number of $q$-multichains of $\pnc_{N;k}^{(m)}$ that have the rank jump vector $(r_{1},r_{2},\ldots,r_{q+1})$ is
	\begin{align*}
		\frac{1}{N} & \ran(r_{1},1-k,N)\prod_{i=2}^{q+1}\ran(r_{i},1-k,mN)\\
		& = \frac{1}{N-(k-1)r_{1}}\binom{N-(k-1)r_{1}}{r_{1}}\prod_{i=2}^{q+1}\frac{mN}{mN-(k-1)r_{i}}\binom{mN-(k-1)r_{i}}{r_{i}}.
	\end{align*}
\end{theorem}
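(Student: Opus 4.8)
The plan is to generalize the proof of \Cref{thm:1modk_rank_enumeration}, whose engine is the correspondence of \Cref{lem:multichain_factorization} between multichains and reduced factorizations of $c_N$, followed by Krattenthaler's enumeration of factorizations of a long cycle with prescribed cycle types. First I would record the rank data. Writing a $q$-multichain of $\pnc_{N;k}^{(m)}$ as $C^{(1)}\leq_{\mathbf{k}}\cdots\leq_{\mathbf{k}}C^{(q)}$ with delta sequences $\delta_o(C^{(j)})=\bigl(d_0^{(j)};d_1^{(j)},\ldots,d_m^{(j)}\bigr)$, the defining order says that each column $d_i^{(1)}\geq_k d_i^{(2)}\geq_k\cdots\geq_k d_i^{(q)}$ is a decreasing chain for $1\leq i\leq m$, while each row is a reduced factorization $d_0^{(j)}d_1^{(j)}\cdots d_m^{(j)}=c_N$. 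The rank of $C^{(j)}$ is $\ellk\bigl(d_0^{(j)}\bigr)$; this increases along $\leqk$-chains (since the $\ellk(d_i^{(j)})$ with $i\geq 1$ decrease and the row lengths sum to $n$) and specializes correctly to the rank on $\pnc_{N;k}\cong\pnc_{N;k}^{(1)}$. With the conventions $\ellk(d_0^{(0)})=0$ and $d_0^{(q+1)}=c_N$, the increments $e_i^{(j)}\defs\bigl(d_i^{(j)}\bigr)^{-1}d_i^{(j-1)}\in\GSo_{N;k}$ satisfy $\ellk(d_i^{(j-1)})=\ellk(d_i^{(j)})+\ellk(e_i^{(j)})$, whence the $j$-th rank jump is $r_j=\ellk(d_0^{(j)})-\ellk(d_0^{(j-1)})=\sum_{i=1}^m\ellk(e_i^{(j)})$. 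In particular the rank jumps are automatically nonnegative, and $r_1=\ellk(d_0^{(1)})$ is of a different nature than $r_2,\ldots,r_{q+1}$, which is the source of the asymmetry in the formula.

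Next I would set up a bijection, generalizing \Cref{lem:multichain_factorization}, between such $q$-multichains with rank jump vector $(r_1,\ldots,r_{q+1})$ and reduced factorizations of $c_N$ into $1+mq$ factors in $\GSo_{N;k}$, ordered as
\[
  c_N=d_0^{(1)}\cdot\prod_{i=1}^m\bigl(e_i^{(q+1)}e_i^{(q)}\cdots e_i^{(2)}\bigr),
\]
subject to $\ellk(d_0^{(1)})=r_1$ and $\sum_{i=1}^m\ellk(e_i^{(j)})=r_j$ for $2\leq j\leq q+1$. In the forward direction one uses that $d_i^{(1)}=e_i^{(q+1)}\cdots e_i^{(2)}$ is the telescoping factorization of the first-row entry into its increments (since $d_i^{(q+1)}=\id$ for $i\geq 1$); the total length is $r_1+\sum_{j\geq 2}r_j=n=\ellk(c_N)$, so the displayed factorization is reduced. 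Conversely, from such a factorization one sets $d_i^{(j)}\defs e_i^{(q+1)}\cdots e_i^{(j+1)}$, recovers $d_0^{(j)}$ from the row relation, and reconstructs each $m$-multichain $C^{(j)}$ by partial products. The technical heart is to verify that this reconstruction lands in $\pnc_{N;k}^{(m)}$ — that each recovered row is a reduced delta sequence and that the partial products form a $\leqk$-chain — which I would deduce from \Cref{cor:lower_ideal} together with the stability of $\nc_{N;k}$ under taking prefixes and quotients of reduced factorizations of $c_N$ via the Hurwitz action.

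With the bijection in hand, the enumeration proceeds exactly as in the proof of \Cref{thm:1modk_rank_enumeration}. Prescribing the cycle type of each of the $1+mq$ factors, Krattenthaler's count of factorizations of the $N$-cycle yields $N^{mq}$ times a product of multinomial factors; since this count depends only on the multiset of factor cycle types, the particular order and grouping of the $e_i^{(j)}$ is irrelevant at this stage. Summing each factor's cycle type over all types of its prescribed $\ellk$-length (the same multinomial-to-binomial summation used before) turns each factor of length $s$ into a term $\tfrac1N\ran(s,1-k,N)$, so that the number of factorizations with prescribed lengths $r_1$ and $s_i^{(j)}$ is
\[
  \frac1N\,\ran(r_1,1-k,N)\prod_{j=2}^{q+1}\prod_{i=1}^m\ran\bigl(s_i^{(j)},1-k,N\bigr).
\]
Finally, summing over all splittings $s_1^{(j)}+\cdots+s_m^{(j)}=r_j$ and applying the $m$-fold Raney convolution $\sum_{s_1+\cdots+s_m=r_j}\prod_{i=1}^m\ran(s_i,1-k,N)=\ran(r_j,1-k,mN)$ (with $a_1=\cdots=a_m=N$) from the proof of \Cref{thm:1modk_zeta} collapses each group into a single factor with parameter $mN$, giving
\[
  \frac1N\,\ran(r_1,1-k,N)\prod_{j=2}^{q+1}\ran(r_j,1-k,mN),
\]
which is the claimed formula.

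The main obstacle is the bijection of the second step: while the increment decomposition is transparent, proving that every reduced $(1+mq)$-factor factorization of $c_N$ reconstructs a genuine element of $\pnc_{N;k}^{(m)}$ — with all $q$ reconstructed rows simultaneously being reduced delta sequences and all first coordinates assembling into valid $m$-multichains — requires care with the interplay between the column factorizations and the row products. Everything downstream (the cycle-type bookkeeping and the Raney convolution) is routine given the proof of \Cref{thm:1modk_rank_enumeration}.
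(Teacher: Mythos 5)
Your proposal is correct and follows essentially the same route as the paper: $q$-multichains of $\pnc_{N;k}^{(m)}$ with rank jump vector $(r_1,\ldots,r_{q+1})$ correspond to reduced factorizations of $c_N$ into $1+mq$ grouped factors satisfying exactly the length constraints you describe, the count for fixed lengths is supplied by \Cref{thm:1modk_rank_enumeration}, and the Raney convolution of \Cref{lem:raney_recursion} collapses each group of $m$ factors into a single $\ran(r_i,1-k,mN)$ term. The one difference is that the step you single out as the main obstacle---verifying that every such factorization reconstructs a genuine element of $\pnc_{N;k}^{(m)}$---is not proved in the paper at all but is instead cited from Corollary~12 of \cite{krattenthaler10decomposition}, which provides precisely this correspondence; your sketch of a direct proof (telescoping increments, reconstruction by partial products, and \Cref{cor:lower_ideal} plus conjugation-invariance of $\ellk$ to see that each reconstructed row is reduced) is workable, but the paper's citation makes it unnecessary.
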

\begin{proof}
	Following \cite{krattenthaler10decomposition}*{Corollary~12}, any such multichain corresponds to a unique factorization 
	\begin{equation}\label{eq:factorizations_mdivisible}
		c_{N} = u_{0}^{(1)}\bigl(v_{1}^{(2)}v_{1}^{(3)}\cdots v_{1}^{(q+1)}\bigr)\bigl(v_{2}^{(2)}v_{2}^{(3)}\cdots v_{2}^{(q+1)}\bigr)\cdots\bigl(v_{m}^{(2)}v_{m}^{(3)}\cdots v_{m}^{(q+1)}\bigr)
	\end{equation}
	into elements from $\pnc_{N;k}^{(m)}$, where
	\begin{displaymath}
		\ell_{1}\bigl(u_{0}^{(1)}\bigr)+\sum_{i,j}\ell_{1}\bigl(v_{j}^{(i)}\bigr) = kn
	\end{displaymath}
	with $\ell_{1}\bigl(u_0^{(1)}\bigr) = kr_{1}$ and
	\begin{displaymath}
		\ell_{1}\bigl(v_{1}^{(i)}\bigr) + \ell_{1}\bigl(v_{2}^{(i)}\bigr) + \cdots + \ell_{1}\bigl(v_{m}^{(i)}\bigr) = kr_{i}
	\end{displaymath}
	for $i\in\{2,3,\ldots,q+1\}$.
	
	First suppose that $\ell_{1}\bigl(v_{j}^{(i)}\bigr)=ks_{j}^{(i)}$, with the $s_{j}^{(i)}$'s fixed, such that
	\begin{equation}\label{eq:length_condition_mdivisible}
		s_{1}^{(i)} + s_{2}^{(i)} + \cdots + s_{m}^{(i)} = r_{i}
	\end{equation}
	for $i\in\{2,3,\ldots,q+1\}$.  The number of factorizations \eqref{eq:factorizations_mdivisible} satisfying \eqref{eq:length_condition_mdivisible} is according to \Cref{thm:1modk_rank_enumeration} precisely
	\begin{equation}\label{eq:fixed_factorizations_mdivisible}
		\frac{1}{N}\ran(r_{1},1-k,N)\prod_{i=2}^{q+1}\prod_{j=1}^{m}\ran\bigl(s_{j}^{(i)},1-k,N\bigr).
	\end{equation}

	The desired number of multichains is now obtained by summing \eqref{eq:fixed_factorizations_mdivisible} over all possible $s_{j}^{(i)}$'s for which \eqref{eq:length_condition_mdivisible} holds.  In view of \Cref{lem:raney_recursion} we conclude that this number is
	\begin{displaymath}
		\frac{1}{N}\ran(r_{1},1-k,N)\prod_{i=2}^{q+1}\ran\bigl(r_{i},1-k,mN\bigr).\qedhere
	\end{displaymath}
\end{proof}

We obtain the following corollaries.

\begin{corollary}\label{cor:1modk_zeta_mdivisible}
  For $k,m,m\geq 1$, the number of $q$-multichains of $\pnc_{N;k}^{(m)}$ is
  \begin{displaymath}
    \ZetaPol_{\pnc_{N;k}^{(m)}}(q+1) = \ran\bigl(n,mkq+1,mq+1\bigr) = \frac{mq+1}{mNq+1}\binom{mNq+n}{n}.
  \end{displaymath}
\end{corollary}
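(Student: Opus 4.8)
The plan is to obtain this as a direct consequence of \Cref{thm:1modk_rank_enumeration_mdivisible} by summing over all admissible rank jump vectors, in exact parallel with the way \Cref{thm:1modk_zeta} was deduced from \Cref{thm:1modk_rank_enumeration} (which is the special case $m=1$). By definition $\ZetaPol_{\pnc_{N;k}^{(m)}}(q+1)$ counts all $q$-multichains of $\pnc_{N;k}^{(m)}$, so I would sum the formula of \Cref{thm:1modk_rank_enumeration_mdivisible} over all tuples $(r_{1},r_{2},\ldots,r_{q+1})$ of nonnegative integers with $r_{1}+r_{2}+\cdots+r_{q+1}=n$, obtaining
\begin{displaymath}
  \ZetaPol_{\pnc_{N;k}^{(m)}}(q+1) = \frac{1}{N}\sum_{r_{1}+\cdots+r_{q+1}=n}\ran(r_{1},1-k,N)\prod_{i=2}^{q+1}\ran(r_{i},1-k,mN).
\end{displaymath}

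The key step is to evaluate this sum using the Raney convolution identity \cite{muehle18poset}*{Lemma~5.5}, which asserts $\sum_{n_{1}+\cdots+n_{r}=n}\prod_{i=1}^{r}\ran(n_{i},b,a_{i})=\ran(n,b,a)$ with $a=a_{1}+\cdots+a_{r}$. The one point meriting attention is that the factors here are not identical: the second parameter $b=1-k$ is common to all of them, but the third parameter is $a_{1}=N$ for the first factor and $a_{i}=mN$ for the remaining $q$ factors. Since \cite{muehle18poset}*{Lemma~5.5} permits the $a_{i}$ to differ, it applies verbatim with total $a=N+qmN=(mq+1)N$, giving
\begin{displaymath}
  \ZetaPol_{\pnc_{N;k}^{(m)}}(q+1) = \frac{1}{N}\ran\bigl(n,1-k,(mq+1)N\bigr).
\end{displaymath}

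What remains is a purely algebraic simplification, which I expect to be the only place demanding care. Expanding the definition of the Raney number and substituting $N=kn+1$ gives $n(1-k)+(mq+1)N = n(mkq+1)+(mq+1)$, so the prefactor $\tfrac{1}{N}$ cancels the leading $(mq+1)N$ in $\ran\bigl(n,1-k,(mq+1)N\bigr)$ and produces exactly $\ran(n,mkq+1,mq+1)$. Finally, writing $\ran(n,mkq+1,mq+1)=\frac{mq+1}{n(mkq+1)+(mq+1)}\binom{n(mkq+1)+(mq+1)}{n}$ and noting $n(mkq+1)+(mq+1)=mNq+n+1$, the same rearrangement of the binomial coefficient used to put \Cref{thm:1modk_zeta} into closed form—peeling off one factor to lower the top from $mNq+n+1$ to $mNq+n$ and the denominator to $mNq+1$—yields $\frac{mq+1}{mNq+1}\binom{mNq+n}{n}$, as claimed.
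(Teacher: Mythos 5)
Your proposal is correct and follows essentially the same route as the paper: the paper likewise sums the formula of \Cref{thm:1modk_rank_enumeration_mdivisible} over all rank jump vectors via the Raney convolution identity to arrive at $\frac{1}{N}\ran(n,1-k,N+qmN)$, and then notes this is equivalent to the stated closed form. The only difference is that you spell out the two points the paper leaves implicit---the application of the convolution with unequal third parameters ($N$ once, $mN$ for the remaining $q$ factors) and the algebraic simplification using $N=kn+1$---both of which you carry out correctly.
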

\begin{proof}
	We need to sum the formula from \Cref{thm:1modk_rank_enumeration_mdivisible} over all possible rank jump vectors using \Cref{lem:raney_recursion}, and obtain
	\begin{align*}
		\ZetaPol_{\pnc_{N;k}^{(m)}}(q+1) & = \frac{1}{N}\ran(n,1-k,N+qmN);
	\end{align*}
	this formula is equivalent to the formula in the statement.
\end{proof}

\begin{corollary}\label{cor:1modk_chains_mdivisible}
  For $k,m,n\geq 1$, the number of maximal chains in $\pnc_{N;k}^{(m)}$ is $m^{n}N^{n-1}$.
\end{corollary}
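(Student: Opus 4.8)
The plan is to follow the template of the proof of \Cref{cor:max_chains}: express maximal chains as multichains carrying a prescribed rank jump vector, and then read off their number directly from \Cref{thm:1modk_rank_enumeration_mdivisible}. First I would pin down the grading of $\pnc_{N;k}^{(m)}$. Writing $\delta_{o}(C)=(d_{0};d_{1},\ldots,d_{m})$, the order depends only on $(d_{1},\ldots,d_{m})$, and since $\sum_{i=0}^{m}\ellk(d_{i})=n$ by \Cref{lem:multichain_factorization}, a covering relation should move exactly one $(k{+}1)$-cycle's worth of $\ellk$-length out of some $d_{j}$ with $j\geq 1$ and into $d_{0}$. Thus I expect $\pnc_{N;k}^{(m)}$ to be graded of rank $n$ with rank function $C\mapsto\ellk(d_{0})$, with unique top $\hat{1}=(c_N,\ldots,c_N)$ (where $d_{0}=c_N$) and with the rank-$0$ elements being exactly those with $d_{0}=\id$.

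The essential difference from the case $m=1$ is that $\pnc_{N;k}^{(m)}$ has \emph{many} minimal elements, so a maximal chain carries the extra data of which rank-$0$ element it starts from. The clean way to encode this is to record a maximal chain $C^{(0)}\lessdot_{k}C^{(1)}\lessdot_{k}\cdots\lessdot_{k}C^{(n)}=\hat{1}$ as the $n$-multichain $(C^{(0)},C^{(1)},\ldots,C^{(n-1)})$. Its rank jump vector is $(0,1,1,\ldots,1)$, since $C^{(0)}$ has rank $0$ and each successive rank increases by $1$; conversely any $n$-multichain with this rank jump vector has distinct, consecutively-covering terms and is therefore a maximal chain. (The naive guess of using the length-$n$ vector $(1,\ldots,1)$ counts only the saturated chains starting at rank $1$, and hence undercounts by a factor of $m$; keeping the extra $r_{1}=0$ coordinate is precisely what records the choice of minimal element.)

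It then remains to substitute into \Cref{thm:1modk_rank_enumeration_mdivisible} with $q=n$ and $(r_{1},\ldots,r_{n+1})=(0,1,\ldots,1)$. Using $\ran(0,1-k,N)=1$ and $\ran(1,1-k,mN)=mN$, the count becomes
\[
  \frac{1}{N}\,\ran(0,1-k,N)\prod_{i=2}^{n+1}\ran(1,1-k,mN)=\frac{1}{N}\cdot 1\cdot(mN)^{n}=m^{n}N^{n-1},
\]
as claimed; specializing $m=1$ recovers \Cref{cor:max_chains}. I expect the genuine obstacle to lie in the first paragraph rather than the last: one must verify carefully that $\ellk(d_{0})$ really is a rank function on $\pnc_{N;k}^{(m)}$ (so that the poset is graded and covers behave as described), and confirm that the asymmetric coordinate $r_{1}$ in \Cref{thm:1modk_rank_enumeration_mdivisible} is exactly the bookkeeping device that tracks the starting minimal element, so that $r_{1}=0$ is the correct value to plug in. Once these structural facts are in hand, the substitution above is routine.
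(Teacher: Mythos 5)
Your proof is correct and is essentially the paper's own argument: the paper likewise obtains the count by substituting $q=n$ and the rank jump vector $(r_{1},r_{2},\ldots,r_{n+1})=(0,1,\ldots,1)$ into \Cref{thm:1modk_rank_enumeration_mdivisible} (the paper's phrasing of this substitution contains a typo, and your reading is the one that yields $\frac{1}{N}\cdot 1\cdot(mN)^{n}=m^{n}N^{n-1}$). Your additional care about the grading of $\pnc_{N;k}^{(m)}$ and the role of $r_{1}=0$ in recording the choice of minimal element is a more explicit justification of a step the paper leaves implicit, not a different method.
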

\begin{proof}
	This follows from \Cref{thm:1modk_rank_enumeration_mdivisible} by setting $q=n$ and $r_{1}=0=r_{n+1}$ and $r_{1}=r_{2}=\cdots=r_{n}=1$.
\end{proof}

Observe that $\pnc_{N;k}^{(m)}$ has several minimal elements when $m>1$.  Let $\widehat{\pnc}_{N;k}^{(m)}$ denote the poset that is created from $\pnc_{N;k}^{(m)}$ by adding a least element.  Let $\overline{\pnc}_{N;k}^{(m)}$ denote the poset that is created from $\pnc_{N;k}^{(m)}$ by merging all minimal elements into one.

\begin{corollary}\label{cor:1modk_mobius_mdivisible}
  We have 
  \begin{displaymath}
    \mu\bigl(\widehat{\pnc}_{N;k}^{(m)}\bigr) = (-1)^{n-1}\ran(n,km,m-1) = (-1)^{n-1} \frac{m-1}{Nm-1}\binom{Nm-1}{n},
  \end{displaymath}
  as well as
  \begin{align*}
    \mu\bigl(\overline{\pnc}_{N;k}^{(m)}\bigr) & = (-1)^{n}\Bigl(\ran(n,k(m+1),m) - \ran(n,km,m-1)\Bigr)\\
      & = (-1)^{n} \left(\frac{m}{N(m+1)-1}\binom{N(m+1)-1}{n} - \frac{m-1}{Nm-1}\binom{Nm-1}{n}\right).
  \end{align*}
\end{corollary}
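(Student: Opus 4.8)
The plan is to reduce both Möbius invariants to the zeta polynomial of $\pnc_{N;k}^{(m)}$ computed in \Cref{cor:1modk_zeta_mdivisible}, using the standard fact \cite{stanley11enumerative_vol1}*{Proposition~3.12.1(c)} that for a finite poset $Q$ with $\hat 0$ and $\hat 1$ one has $\mu(Q)=\ZetaPol_{Q}(-1)$, exactly as in the proof of \Cref{cor:1modk_mobius}. The auxiliary observation I would isolate first is this: if $Q$ is a finite poset with a greatest element and $Q+\hat 0$ denotes $Q$ with a new least element adjoined, then every multichain of $Q+\hat 0$ is an initial run of copies of $\hat 0$ followed by a multichain of $Q$, so that $\ZetaPol_{Q+\hat 0}(s)=\sum_{t=1}^{s}\ZetaPol_{Q}(t)$ for all positive integers $s$. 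The right-hand side is the partial-sum polynomial of $\ZetaPol_{Q}$, so this is a polynomial identity; the associated finite-difference relation $\ZetaPol_{Q+\hat 0}(s)-\ZetaPol_{Q+\hat 0}(s-1)=\ZetaPol_{Q}(s)$ together with $\ZetaPol_{Q+\hat 0}(0)=0$ gives, at $s=0$, the formula $\mu(Q+\hat 0)=\ZetaPol_{Q+\hat 0}(-1)=-\ZetaPol_{Q}(0)$.

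For $\widehat{\pnc}_{N;k}^{(m)}$ this applies directly with $Q=\pnc_{N;k}^{(m)}$, giving $\mu\bigl(\widehat{\pnc}_{N;k}^{(m)}\bigr)=-\ZetaPol_{\pnc_{N;k}^{(m)}}(0)$. I would then substitute $q=-1$ into \Cref{cor:1modk_zeta_mdivisible} and simplify using $\binom{-a}{b}=(-1)^{b}\binom{a+b-1}{b}$: this turns $\ZetaPol_{\pnc_{N;k}^{(m)}}(0)=\frac{1-m}{1-mN}\binom{n-mN}{n}$ into $(-1)^{n}\ran(n,km,m-1)$ (using $nkm+(m-1)=Nm-1$), and the sign flip yields the claimed $(-1)^{n-1}\ran(n,km,m-1)$. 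As a sanity check this vanishes when $m=1$, consistent with adjoining a second bottom below the existing element $\id$.

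For $\overline{\pnc}_{N;k}^{(m)}$ I would first note that, because the minimal elements of $\pnc_{N;k}^{(m)}$ are pairwise incomparable, merging them coincides with deleting them and adjoining a single new bottom, so $\overline{\pnc}_{N;k}^{(m)}\cong(\pnc_{N;k}^{(m)}\setminus A)+\hat 0'$ where $A$ is the set of minimal elements. The same lemma then gives $\mu\bigl(\overline{\pnc}_{N;k}^{(m)}\bigr)=-\ZetaPol_{\pnc_{N;k}^{(m)}\setminus A}(0)$, so it remains to find the zeta polynomial of $\pnc_{N;k}^{(m)}$ with its minimal elements removed. The crucial structural point is that for a $q$-multichain the first coordinate $r_{1}$ of its rank-jump vector equals the rank of its smallest element $w_{1}$; since rank is nondecreasing along a multichain, the multichain avoids minimal elements precisely when $r_{1}\ge 1$. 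Using \Cref{thm:1modk_rank_enumeration_mdivisible} (whose $r_{1}=0$ specialization has leading factor $\ran(0,1-k,N)=1$) together with the Raney convolution of \Cref{lem:raney_recursion}, the number of $q$-multichains with $r_{1}=0$ sums to $\frac{1}{N}\ran(n,1-k,qmN)$, whence $\ZetaPol_{\pnc_{N;k}^{(m)}\setminus A}(q+1)=\ZetaPol_{\pnc_{N;k}^{(m)}}(q+1)-\frac{1}{N}\ran(n,1-k,qmN)$ as polynomials in $q$.

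Evaluating at $q=-1$ finishes the argument: the first term contributes $(-1)^{n}\ran(n,km,m-1)$ as before, while the second, after the negative-argument simplification, collapses to $\frac{1}{N}\ran(n,1-k,-mN)=(-1)^{n}\ran(n,k(m+1),m)$, where the identity $\frac{m}{C-n}\binom{C-1}{n}=\frac{m}{C}\binom{C}{n}$ with $C=N(m+1)-1$ resolves the off-by-one in the binomials. This gives $\mu\bigl(\overline{\pnc}_{N;k}^{(m)}\bigr)=(-1)^{n}\bigl(\ran(n,k(m+1),m)-\ran(n,km,m-1)\bigr)$. I expect the main obstacle to be the third paragraph: correctly identifying the minimal elements of $\pnc_{N;k}^{(m)}$ with the rank-jump vectors having $r_{1}=0$, and recognizing that the discarded multichains are enumerated by a single clean Raney convolution; once that is in place, the remaining manipulations are routine binomial bookkeeping of the same flavor as in \Cref{cor:1modk_mobius}.
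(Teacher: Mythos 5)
Your proof is correct, and at bottom it follows the same strategy as the paper: reduce both M\"obius invariants to evaluations of the zeta polynomial of \Cref{cor:1modk_zeta_mdivisible} at negative arguments. The difference is one of self-containedness. The paper's proof is essentially a pair of citations: it invokes the argument from Armstrong's Theorem~3.7.7 for $\widehat{\pnc}_{N;k}^{(m)}$, and declares the $\overline{\pnc}_{N;k}^{(m)}$ case to be ``verbatim'' the argument of Armstrong--Krattenthaler. You instead prove the required poset-theoretic ingredients directly: the partial-sum identity $\ZetaPol_{Q+\hat{0}}(s)=\sum_{t=1}^{s}\ZetaPol_{Q}(t)$, whence $\mu(Q+\hat{0})=-\ZetaPol_{Q}(0)$; the observation that merging the (pairwise incomparable) minimal elements of $\pnc_{N;k}^{(m)}$ is the same as deleting them and adjoining a new bottom; and the computation of the truncated zeta polynomial by subtracting the $r_{1}=0$ multichains, which correctly requires the refined count of \Cref{thm:1modk_rank_enumeration_mdivisible} together with \Cref{lem:raney_recursion}, not merely \Cref{cor:1modk_zeta_mdivisible}. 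Your structural identifications check out: the minimal elements of $\pnc_{N;k}^{(m)}$ are exactly the rank-$0$ elements (the multichains with first entry $\id$), and $r_{1}$ is the rank of the bottom element of a multichain, so the multichains avoiding minimal elements are precisely those with $r_{1}\geq 1$. A small bonus of writing the argument out in full: the paper's proof text asserts $\mu\bigl(\widehat{\pnc}_{N;k}^{(m)}\bigr)=\ZetaPol_{\pnc_{N;k}^{(m)}}(0)$, which is off by a sign under the paper's own conventions (and inconsistent with the stated formula); your derivation gives the correct relation $\mu\bigl(\widehat{\pnc}_{N;k}^{(m)}\bigr)=-\ZetaPol_{\pnc_{N;k}^{(m)}}(0)=(-1)^{n-1}\ran(n,km,m-1)$.
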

\begin{proof}
	As explained in the proof of \cite{armstrong09generalized}*{Theorem~3.7.7}, it holds that $\mu\bigl(\widehat{\pnc}_{N;k}^{(m)}\bigr) = \ZetaPol_{\pnc_{N;k}^{(m)}}(0)$, and the claimed formula follows from \Cref{cor:1modk_zeta_mdivisible}.  The formula for $\mu\bigl(\overline{\pnc}_{N;k}^{(m)}\bigr)$ follows also from \Cref{cor:1modk_zeta_mdivisible} using an argument verbatim to the one in \cite{armstrong09euler}*{Section~3}.
\end{proof}

For $k=1$, the first equality in \Cref{cor:1modk_mobius_mdivisible} is \cite{armstrong09generalized}*{Theorem~3.7.7}, and the second equality is \cite{armstrong09euler}*{Theorem~3}.

\begin{remark}
	Corollary~12 of \cite{krattenthaler10decomposition} can be used to further refine \Cref{thm:1modk_rank_enumeration_mdivisible} by prescribing the block structure of the first element of such a chain.
\end{remark}

\section{Maximal chains of $\pnc_{N;k}$ and $k$-parking functions}
  \label{sec:1modk_hurwitz}
\subsection{Maximal chains and the Hurwitz action}

Let us denote the set of reduced factorizations of $c_{N}=(1\;2\;\ldots\;N)$ into $(k{+}1)$-cycles by $\Fact_{k}(c_N)$; by construction, these are in bijection with maximal chains in $\pnc_{N;k}$; see also ~\Cref{lem:multichain_factorization}.  Since $C_{N;k}$ is invariant under $\Symmetric_{N}$-conjugation, the Hurwitz action is a bijection on the set of reduced factorizations of $w\in\GS_{N;k}$ into $(k{+}1)$-cycles.

\begin{theorem}\label{thm:snaction}
  For $k,n\geq 1$ the braid group $\Braid_{n}$ acts transitively on $\Fact_{k}(c_N)$.
\end{theorem}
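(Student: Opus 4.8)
The plan is to prove transitivity by induction on $n$, at each stage using Hurwitz moves to \emph{standardize a single factor} and thereby peel off a copy of a shorter long cycle. Recall from the discussion before the statement (and \Cref{lem:multichain_factorization}) that a reduced factorization $(t_1,\dots,t_n)\in\Fact_k(c_N)$ is the same datum as a maximal chain $\id\lessdot_k t_1\lessdot_k t_1t_2\lessdot_k\cdots\lessdot_k c_N$ in $\pnc_{N;k}$, with each $t_i$ a $(k{+}1)$-cycle; since $C_{N;k}$ is a full conjugacy class, $\Braid_n$ genuinely acts on $\Fact_k(c_N)$. For the base case $n=1$ we have $N=k+1$ and $c_N$ is itself a single $(k{+}1)$-cycle, so $\Fact_k(c_N)$ is a singleton and there is nothing to prove.

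For the inductive step I would first record the \emph{peeling computation}. Suppose a factorization has been brought into a form with first factor $t_1=(1\;2\;\ldots\;k{+}1)$, which is a legitimate atom of $\pnc_{N;k}$ by \Cref{thm:k_indivisible_equivalent}. A direct check (using the composition convention of the paper) shows that $t_1^{-1}c_N$ fixes $1,2,\dots,k$ and restricts to the long cycle $(k{+}1\;k{+}2\;\ldots\;N)$ on the remaining $N-k=k(n-1)+1$ points. Relabelling the support $\{k{+}1,\dots,N\}$ in increasing order identifies this permutation with $c_{N-k}$, so that $(t_2,\dots,t_n)=t_1^{-1}c_N$ becomes a reduced factorization of $c_{N-k}$ into $(k{+}1)$-cycles. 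The braid generators $\sigma_2,\dots,\sigma_{n-1}$ act on precisely the factors $(t_2,\dots,t_n)$ and leave $t_1$ untouched, so by the inductive hypothesis all factorizations sharing the first factor $(1\;2\;\ldots\;k{+}1)$ lie in a single $\Braid_n$-orbit. Thus everything reduces to the following statement. \emph{Reduction Lemma:} every $F\in\Fact_k(c_N)$ is Hurwitz-equivalent to one whose first factor equals $(1\;2\;\ldots\;k{+}1)$.

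This lemma is the heart of the matter and where I expect the main difficulty. The moves $\sigma_1^{\pm1}$ replace the current first factor by $t_2$ or by $t_1t_2t_1^{-1}$, and by transporting a factor $t_j$ to the front one realizes as a new first factor only a suitable conjugate of $t_j$; hence the set of achievable first factors is far from arbitrary and must be controlled. My plan is to attach to the first factor $t_1=(a_0<a_1<\cdots<a_k)$---which by \Cref{rem:increasing_cycles} is increasing with $a_{i+1}-a_i\equiv1\pmod k$---the complexity statistic $a_k-a_0$ (its span), and to induct on this statistic. If $t_1\neq(1\;2\;\ldots\;k{+}1)$ then either $a_0>1$ or some gap $a_{i+1}-a_i>1$; in either case the noncrossing structure of the chain (\Cref{thm:k_indivisible_equivalent}) forces a block of the successors of $w_1=t_1$ to sit in the offending gap, and the cut/join description of covers in \Cref{cor:covering_relations} lets me locate a neighbouring factor to braid with so as to strictly decrease the span while preserving membership in $\Fact_k(c_N)$. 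Iterating drives the first factor to $(1\;2\;\ldots\;k{+}1)$.

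The delicate point, and the step I expect to require the most care, is verifying that such a neighbouring factor always exists and that the chosen Hurwitz move genuinely decreases the statistic rather than merely relabelling the obstruction; making this precise is essentially a connectivity statement for the atoms of $\pnc_{N;k}$ under the Hurwitz moves. An alternative route to the Reduction Lemma is to refine each increasing $(k{+}1)$-cycle into $k$ transpositions, invoke the classical transitivity of the Hurwitz action on transposition factorizations of $c_N$, and descend along the inclusion $\pnc_{N;k}\hookrightarrow\pnc_{N;1}$ of \Cref{cor:induced_subposet}; the obstacle there is that transposition-level moves need not respect the grouping into $(k{+}1)$-cycles, so one must check that the $k$-grouping can be maintained throughout---which I would handle by restricting to those transposition moves arising as refinements of the block moves $\sigma_i^{\pm1}$.
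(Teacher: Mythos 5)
Your inductive frame is sound as far as it goes: the peeling computation $t_1^{-1}c_N=(k{+}1\;k{+}2\;\ldots\;N)$ is correct under the paper's composition convention, and once the first factor is fixed to $(1\;2\;\ldots\;k{+}1)$, the subgroup $\langle\sigma_2,\ldots,\sigma_{n-1}\rangle\cong\Braid_{n-1}$ together with the inductive hypothesis does place all such factorizations in one orbit. But the entire content of the theorem sits in your ``Reduction Lemma,'' and you do not prove it; you only outline a plan and concede that the decisive verification (that a suitable neighbouring factor exists and that the chosen move makes progress) is missing. Moreover, the plan as stated provably fails: every factor of a reduced factorization lies in $\nc_{N;k}$, so by \Cref{thm:k_indivisible_equivalent}(iii) its span $a_k-a_0$ is at least $k$, with equality exactly for the interval cycles $(a_0\;a_0{+}1\;\cdots\;a_0{+}k)$. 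Hence if the first factor is such an interval with $a_0>1$---already for $k=1$, $n=2$ the factorization $(2\;3)\cdot(1\;3)=c_3$ has this form---its span is already minimal, no Hurwitz move can ``strictly decrease the span,'' and your induction stalls precisely where it still has work to do. In that case one must instead decrease $a_0$: here $\sigma_1^{-1}$ turns $(2\;3)\cdot(1\;3)$ into $(1\;2)\cdot(2\;3)$, a move your statistic does not register as progress. A correct measure must simultaneously control the internal gaps and the starting value $a_0$ (inducting on $a_k$ alone would be a more promising candidate), and showing it can always be decreased is exactly the connectivity statement you left open. Your alternative route via refinement into transpositions and \Cref{cor:induced_subposet} is likewise only a sketch with an acknowledged obstruction, so neither path is completed.

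For comparison, the paper does not argue combinatorially at all: it observes that the statement is a special case of the transitivity of the Hurwitz action on minimal factorizations of a long cycle into cycles of prescribed lengths, citing \cite{lando04graphs}*{Theorem~5.4.11}, and remarks that a direct inductive proof in the spirit of what you attempt is carried out for $k=2$ in \cite{muehle18poset}*{Proposition~6.2}. So your outline follows the second, legitimate route, but as submitted it is a proof plan whose key lemma is unproven and whose progress measure fails in a boundary case.
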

\begin{proof}
  This is a special case of \cite{lando04graphs}*{Theorem~5.4.11}. One may also give a direct inductive proof as in \cite{muehle18poset}*{Proposition 6.2} for the case $k=2$.
\end{proof}

We write the entries of a cycle in a factorization $\mathbf{t}=t_1t_2\cdots t_n\in\Fact_{k}(c_N)$ in increasing order as $t_i=(t_{i,1}<t_{i,2}<\cdots<t_{i,k{+}1})$, which is well defined by \Cref{rem:increasing_cycles}.  A factorization $\mathbf{t}\in \Fact_{k}(c_N)$ is \defn{non-decreasing} if $t_{1,1}\leq t_{2,1}\leq\cdots\leq t_{n,1}$.

\begin{lemma}\label{lem:snaction}
For $k,n\geq 1$ there is an action of the symmetric group $\Symmetric_n$ on $\Fact_k(c_N)$ which restricts to the permutation action on the set of smallest elements of each factor $\{t_{i,1}\}_{i=1}^n$.
\end{lemma}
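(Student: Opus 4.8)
The plan is to build the action from the adjacent transpositions $s_1,\dots,s_{n-1}$ generating $\Symmetric_n$, defining each $s_i$ by a sign-corrected Hurwitz move and then verifying the Coxeter relations. Concretely, for $\mathbf t = t_1\cdots t_n \in \Fact_k(c_N)$ I would set $s_i\cdot\mathbf t$ equal to the image of $\mathbf t$ under $\sigma_i$ if $t_{i,1} < t_{i+1,1}$, under $\sigma_i^{-1}$ if $t_{i,1} > t_{i+1,1}$, and to $\mathbf t$ itself if $t_{i,1} = t_{i+1,1}$. Since $\sigma_i^{\pm1}$ preserve $\Fact_k(c_N)$, each $s_i$ is a well-defined self-map, and the only question is whether the maps satisfy $s_i^2 = \id$, $s_is_j = s_js_i$ for $|i-j|\ge 2$, and the braid relation $s_is_{i+1}s_i = s_{i+1}s_is_{i+1}$.

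First I would establish the local claim that motivates the definition: the sign-corrected move exchanges the two relevant minima. Suppose $t_{i,1} = a < b = t_{i+1,1}$. Because $t_{i+1}$ is a $(k{+}1)$-cycle with smallest entry $b$, its support lies in $\{x : x\ge b\}$, so $t_{i+1}$ fixes every element $< b$, in particular $a$. Writing cycles in increasing order as in \Cref{rem:increasing_cycles}, the factor $\sigma_i$ creates in position $i{+}1$ is $t_{i+1}^{-1}t_it_{i+1}$, whose support is $t_{i+1}^{-1}\bigl(\mathrm{supp}(t_i)\bigr)$; entries $< b$ are fixed and entries $\ge b$ map to entries $\ge b$, so its minimum is still $a$, while the factor in position $i$ is $t_{i+1}$ with minimum $b$. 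Thus the minima pass from $(a,b)$ to $(b,a)$, and the case $a > b$ is symmetric using $\sigma_i^{-1}$. From this, $s_i^2 = \id$ is immediate (a second application meets the opposite inequality and undoes the first via $\sigma_i^{-1}\sigma_i = \id$), and $s_is_j = s_js_i$ for $|i-j|\ge 2$ holds because the two moves act on disjoint pairs of positions.

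The braid relation is the main obstacle. Restricting to the three factors $A,B,C$ in positions $i,i{+}1,i{+}2$ with minima $\alpha,\beta,\gamma$, one must compare $s_is_{i+1}s_i$ and $s_{i+1}s_is_{i+1}$ as maps on this triple, which I would do by cases on the relative order of $\alpha,\beta,\gamma$. When the three minima are monotone, both words use a single sign throughout and the identity reduces to the genuine braid relation $\sigma_i\sigma_{i+1}\sigma_i = \sigma_{i+1}\sigma_i\sigma_{i+1}$ in $\Braid_n$; for instance when $\alpha<\beta<\gamma$ both sides compute to $\bigl(C,\, C^{-1}BC,\, C^{-1}B^{-1}ABC\bigr)$, and the fully decreasing case is handled identically with $\sigma^{-1}$. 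The remaining non-monotone orders (a peak or a valley) mix $\sigma$ and $\sigma^{-1}$, and the degenerate orders with some of $\alpha,\beta,\gamma$ equal must invoke the identity rule; these are the tedious checks, but each is a short finite computation tracking the conjugated cycles exactly as above.

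An attractive alternative that avoids the case analysis entirely is to prove that the minima map $\phi\colon \mathbf t \mapsto (t_{1,1},\dots,t_{n,1})$ is \emph{injective}. Granting injectivity, the local claim of the second paragraph shows $\phi(s_i\cdot\mathbf t)$ equals $\phi(\mathbf t)$ with coordinates $i,i{+}1$ transposed, so $\phi$ intertwines each $s_i$ with an honest coordinate transposition; since the coordinate transpositions satisfy the Coxeter relations and $\phi$ is injective, all relations for the $s_i$ follow formally, and this equivariance of $\phi$ is exactly the asserted restriction to the permutation action on smallest elements. The cost is that injectivity is essentially the content of the $k$-parking-function bijection developed later in the section: combined with transitivity of the Hurwitz action (\Cref{thm:snaction}) and the count $\lvert\Fact_k(c_N)\rvert = N^{n-1}$ from \Cref{cor:max_chains}, it would identify the image of $\phi$ with the set of $k$-parking functions, on which $\Symmetric_n$ evidently acts by permuting coordinates. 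I expect injectivity to be the genuinely hard input; the direct braid-relation computation, while longer, is elementary and self-contained.
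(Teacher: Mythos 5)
Your proposal is correct and is essentially the paper's own proof: the paper defines exactly the same sign-corrected action ($\sigma_i$ if $t_{i,1}<t_{i+1,1}$, $\sigma_i^{-1}$ if $t_{i,1}>t_{i+1,1}$, identity if equal), verifies that $s_i$ transposes the two minima (citing that $t_it_{i+1}$ is an increasing cycle, where you instead give a clean support-based argument), and then checks the defining relations of $\Symmetric_n$, which is precisely your case analysis, left in the paper at an even lower level of detail. The injectivity detour you sketch as an alternative is unnecessary and, as you yourself note, would risk circularity, since \Cref{thm:parking_functions} is proved using the equivariance from this lemma.
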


\begin{proof}
Such an action is known to exist for $k=1$, see~\cites{stanley97parking,biane02parking}; we generalize it here.  Consider the simple transposition $s_i=(i\; i+1)$, and a factorization $\mathbf{t}=t_1t_2\cdots t_n$ in $\Fact_k(c_N)$. The action of $s_i$ on $\mathbf{t}$ is defined as follows: it acts as the Hurwitz operator $\sigma_i$ if $t_{i,1}<t_{i+1,1}$; as the inverse Hurwitz operator $\sigma^{-1}_i$ if $t_{i,1}>t_{i+1,1}$; and as the identity if $t_{i,1}=t_{i+1,1}$. 

One verifies that $s_i$ transposes the values of $t_{i,1}$ and $t_{i+1,1}$: this uses the fact that the product $t_it_{i+1}$ is an increasing cycle. From this, one easily checks that one can extend this to an action of the  symmetric group by showing that the defining relations of $\GS_n$ hold.
\end{proof}

\subsection{$k$-Parking functions} 
We proved in~\Cref{cor:max_chains} that the maximal chains of $\pnc_{N;k}$ are enumerated by $N^{n-1}$.  In this section, we generalize Stanley's bijection in~\cite{stanley97parking} between maximal chains in the noncrossing partition lattice and parking functions.  In recent work, J.~Irving and A.~Rattan found the same generalization of Stanley's bijection.  We thank them for bringing \cites{irving2016parking, irving2019trees} to our attention at CanaDAM 2019.

For $k,n\geq 1$ define a \defn{$k$-parking function} of length $n$ to be any permutation of an integer tuple $(a_{1},a_{2},\ldots,a_{n})$ satisfying $1\leq a_{i}\leq k(i-1)+1$ for $i\in[n]$.  We write $\Park_{N;k}$ for the set of all $k$-parking functions.  We call $(a_1,a_2,\ldots,a_n)\in\Park_{N;k}$ \defn{non-decreasing} if $a_1\leq a_2\leq\cdots\leq a_n$.

It is a routine application of the cycle lemma (and follows from \cite{yan01generalized}*{Theorem~1}) that the number of $k$-parking functions of length $n$ is $N^{n-1}$.  Note also that there is an obvious $\Symmetric_{n}$-action on $\Park_{N;k}$, obtained by permuting the entries.

\begin{theorem}\label{thm:parking_functions}
  For $k,n\geq 1$, the map from maximal chains in $\pnc_{N;k}$ to $k$-parking functions
  \begin{align*}
    \phi\colon\Fact_{k}(c_N) &\to \Park_{N;k} \\
    t_1t_2\cdots t_n &\mapsto (t_{1,1},t_{2,1},\ldots,t_{n,1})
  \end{align*}
  is a bijection.
\end{theorem}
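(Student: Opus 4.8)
The plan is to exploit the two compatible $\Symmetric_n$-actions, reduce everything to non-decreasing factorizations, and close by a cardinality count, since both $\Fact_k(c_N)$ and $\Park_{N;k}$ have size $N^{n-1}$ (by \Cref{cor:max_chains} and the cycle lemma). First I would record that $\phi$ is $\Symmetric_n$-equivariant: by \Cref{lem:snaction} the generator $s_i$ acts on $\Fact_k(c_N)$ by transposing the two smallest entries $t_{i,1},t_{i+1,1}$ (and fixing all others), which is exactly the permutation action of $s_i$ on the tuple $\phi(\mathbf{t})\in\Park_{N;k}$; hence $\phi(s_i\cdot\mathbf{t})=s_i\cdot\phi(\mathbf{t})$, so $\phi$ intertwines the two actions. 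Because the $\Symmetric_n$-action realizes every permutation of $(t_{1,1},\dots,t_{n,1})$, every orbit of $\Fact_k(c_N)$ contains a \emph{non-decreasing} factorization (one with $t_{1,1}\le\cdots\le t_{n,1}$), and every orbit of $\Park_{N;k}$ contains a unique non-decreasing parking function. By equivariance, both the well-definedness of $\phi$ (that its image lands in $\Park_{N;k}$) and its bijectivity reduce to the non-decreasing case.

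The main tool in the non-decreasing case is the following \emph{successor observation}. Write $u_i=t_1t_2\cdots t_i$ for the prefixes of a non-decreasing factorization and $a_i=t_{i,1}$. Every factor $t_{i+1},\dots,t_n$ has all of its entries $\ge a_{i+1}\ge a_i$, so it fixes every $x<a_{i+1}$; since $c_N=u_i\cdot(t_{i+1}\cdots t_n)$, we get $u_i(x)=c_N(x)=x+1$ for all $x<a_{i+1}$. Applying this one level down gives $u_{i-1}(x)=x+1$ for all $x<a_i$, so $1,2,\dots,a_i$ lie consecutively in a single cycle of $u_{i-1}$. That cycle has length $\ge a_i$, whence $\ello(u_{i-1})\ge a_i-1$; but $u_{i-1}\in\GSo_{N;k}$ with $\ellk(u_{i-1})=i-1$ (\Cref{cor:lower_ideal}), so $\ello(u_{i-1})=k(i-1)$ by \Cref{lem:good_cycles_length}. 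Combining, $a_i-1\le k(i-1)$, i.e.\ $a_i\le k(i-1)+1$; as $(a_1,\dots,a_n)$ is non-decreasing this is exactly the assertion that $\phi(\mathbf{t})$ is a $k$-parking function, proving well-definedness.

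It remains to upgrade $\phi$ to a bijection, for which — the two sets having equal cardinality — it suffices to prove surjectivity, and by equivariance to realize each non-decreasing $k$-parking function $(a_1\le\cdots\le a_n)$ by some factorization. The plan here is to reconstruct the chain $\id=u_0\lessdot_k u_1\lessdot_k\cdots\lessdot_k u_n=c_N$ directly from the tuple: by \Cref{cor:covering_relations} each step joins $k+1$ increasing cycles, and the successor observation forces each $u_i$ to contain the consecutive run $1,2,\dots,a_{i+1}$, so the factor $t_i$ (with prescribed minimum $a_i$) must attach the cycle of $a_i$ to exactly the cycles needed to extend this run, the bound $a_i\le k(i-1)+1=\ello(u_{i-1})+1$ guaranteeing that enough cycles are available. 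The hard part will be verifying that this reconstruction is always well-defined and yields a genuine reduced factorization of $c_N$: the choice of which cycles to merge at a given step is dictated not only by the local minimum $a_i$ but by the \emph{entire} remaining tuple through the successor constraints at all higher levels, and this global dependence — together with the bookkeeping required when several consecutive $a_i$ coincide — is the principal obstacle. Once existence is established, surjectivity of $\phi$ follows by spreading over $\Symmetric_n$-orbits, and then $\lvert\Fact_k(c_N)\rvert=\lvert\Park_{N;k}\rvert=N^{n-1}$ forces $\phi$ to be a bijection.
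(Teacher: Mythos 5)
Your reduction to non-decreasing factorizations via the $\Symmetric_n$-equivariance of \Cref{lem:snaction} is exactly the paper's first step, and your ``successor observation'' is a correct, self-contained proof of well-definedness: the paper composes permutations right-to-left (apply $t_{i+1}\cdots t_n$ first), so one indeed gets $u_i(x)=c_N(x)=x+1$ for all $x<a_{i+1}$, and combining the resulting long cycle in $u_{i-1}$ with $\ello(u_{i-1})=k\,\ellk(u_{i-1})=k(i-1)$ (\Cref{cor:lower_ideal} and \Cref{lem:good_cycles_length}) gives $a_i\leq k(i-1)+1$. This is a genuinely different, and arguably cleaner, route to the parking property than the paper's, which obtains it only as a byproduct of a structural claim about the last factor.

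However, the bijectivity half of your argument has a genuine gap, and you flag it yourself: the reconstruction of a factorization from a non-decreasing parking function is only sketched, and the ``principal obstacle'' you name (which cycles to merge at each step, with global dependence on the whole remaining tuple) is precisely what needs proof. Since bijectivity is the content of the theorem, the proposal is incomplete as it stands. The missing idea---and what the paper actually proves---is that non-decreasingness forces the \emph{last} factor to be a consecutive cycle: if $t_{1,1}\leq\cdots\leq t_{n,1}$, then $t_n=(a_n\;a_n{+}1\;\ldots\;a_n{+}k)$. Indeed, $t_1\cdots t_{n-1}=c_Nt_n^{-1}$ has disjoint cycle decomposition $(1\;\ldots\;t_{n,1}\;t_{n,k+1}{+}1\;\ldots\;N)\,(t_{n,1}{+}1\;\ldots\;t_{n,2})\cdots(t_{n,k}{+}1\;\ldots\;t_{n,k+1})$; each $t_i$ with $i<n$ is a prefix of a reduced factorization of this product, so its support lies inside the support of a single cycle of it (Biane's fact~(F) as cited in the paper, or via \Cref{cor:induced_subposet}), and since $t_{i,1}\leq a_n$ that cycle must be the first one; hence all the other cycles are singletons, i.e.\ $t_{n,j+1}=t_{n,j}+1$. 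With this claim the inverse map is a one-factor-at-a-time recursion: $a_n$ determines $t_n$, the product $c_Nt_n^{-1}$ is order-isomorphic to $c_{N-k}$, and induction on $n$ handles $(a_1,\ldots,a_{n-1})$, the bound $a_n\leq k(n-1)+1$ ensuring $a_n+k\leq N$. This dissolves the global dependence you were worried about---only the last factor ever needs to be identified, and it is determined locally by $a_n$---and it yields injectivity and surjectivity simultaneously, so the cardinality count $N^{n-1}$ from \Cref{cor:max_chains}, while legitimately available, is not even needed.
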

\begin{proof}
  We give a proof based on the $k=1$ case from \cite{biane02parking}.  It is enough to show that $\phi$ is a bijection between non-decreasing factorizations of $c_N$ and non-decreasing $k$-parking functions---indeed the map is clearly equivariant with respect to the symmetric group actions on parking functions and on factorizations from \Cref{lem:snaction}. 
  
  We show by induction on $n$ that, if $t_1t_2\cdots t_n\in\Fact_k(c_N)$ is non-decreasing,
  then $(t_{1,1},t_{2,1},\ldots,t_{n,1})$ is a non-decreasing $k$-parking function.  To prove this, we first claim that if $t_1t_2\cdots t_n\in\Fact_{k}(c_N)$ with $t_{1,1} \leq t_{2,1} \leq \cdots \leq t_{n,1}$, then we must have 
  \begin{displaymath}
    t_{n,1}=t_{n,2}-1=\cdots=t_{n,k{+}1}-k.
  \end{displaymath}
  Since we may write the factorization
  \begin{align*}
    t_1 t_2 \cdots t_{n-1} &= c_N t_n^{-1} \\
    &=(1\;2\; \ldots \;N) (t_{n,k{+}1}\; \ldots\; t_{n,2}\; t_{n,1})\\
    &= (1\;2\;\ldots \;t_{n,1}\; t_{n,k{+}1}{+}1\;\ldots \; N) (t_{n,1}{+}1\;\ldots\;t_{n,2})\cdots (t_{n,k}{+}1\;\ldots\;t_{n,k{+}1}),
  \end{align*}
where the last factorization is into disjoint cycles, each of $t_1,t_2,\ldots,t_{n-1}$ must have support in the set $\{1,2,\ldots,t_{n,1},t_{n,k{+}1}+1,\ldots,N\}$ (by~\cite{biane02parking}*{(F)}).  Therefore, each cycle $(t_{n,i}{+}1\;\ldots\;t_{n,i+1})$ is trivial, from which the claim follows.

By induction, $(t_{1,1},t_{2,1},\ldots,t_{n{-}1,1})$ is a non-decreasing $k$-parking function of length $n-1$.  By assumption we have $t_{n{-}1,1}\leq t_{n,1}$, and since $t_{n,1}+k=t_{n,k{+}1}\leq kn+1$ we conclude $t_{n,1}\leq k(n-1)+1$.  Thus, $(t_{1,1},t_{2,1},\ldots,t_{n,1})$ is a non-decreasing $k$-parking function of length $n$.
\end{proof}

\section{Cambrian Lattices}
\label{sec:camb}

Let $\mathbf{u}=u_1u_2\cdots u_n$ and $\mathbf{v}=v_1v_2\cdots v_n$ be two reduced factorizations of $c_N$ into $(k{+}1)$-cycles.  We say that $\mathbf{u}$ and $\mathbf{v}$ are \defn{commutation equivalent} if $\mathbf{u}$ can be obtained from $\mathbf{v}$ by a sequence of Hurwitz moves on adjacent cycles with disjoint support (so that each move acts as a commutation).

\begin{theorem}[\cite{goulden1994symmetrical}*{Theorem~5.5}]\label{thm:1modk_commutation_equivalent}
  The number of reduced factorizations of $c_{N}$\break into $(k{+}1)$-cycles up to commutation equivalence is the Fu{\ss}--Catalan number\break $\ran(n,2k{+}1,1)$.
\end{theorem}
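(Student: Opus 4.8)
The plan is to realize commutation classes geometrically and then reduce the count to the classical enumeration of polygon dissections. Write $a_{k,n}$ for the number of commutation classes of reduced $(k{+}1)$-cycle factorizations of $c_N$, and set $A(x)=\sum_{n\ge 0}a_{k,n}x^{n}$. I want to show that $A(x)$ is the generating function for $(2k{+}1)$-ary trees, that is, that it satisfies the functional equation $A = 1 + x\,A^{2k+1}$. By Lagrange inversion, or equivalently by iterating the convolution identity of \Cref{lem:raney_recursion} (which expresses that the coefficients of $A^{r}$ are the numbers $\ran(\cdot,2k{+}1,r)$), this forces $a_{k,n}=\ran(n,2k{+}1,1)$.

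First I would fix the combinatorial model. By \Cref{cor:covering_relations} and \Cref{rem:increasing_cycles}, a factorization in $\Fact_{k}(c_N)$ records a way of building $c_N$ from the $N$ singletons by $n$ successive joins, each merging $k{+}1$ currently disjoint increasing cycles into one via an increasing $(k{+}1)$-cycle. Two factors commute precisely when their supports are disjoint, so a commutation class is the heap (in the sense of Viennot) of such a word: the partial order in which one join lies below another whenever the cycle produced by the first is consumed by the second. The naive attempt --- to peel off the root join incident to the letter $1$ --- disconnects the heap into only $k{+}1$ independent sub-heaps and yields $A=1+xA^{k+1}$, the wrong Fu\ss--Catalan number $\ran(n,k{+}1,1)$. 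The discrepancy is exactly the ``Cambrian'' positional data: when $k{+}1$ arcs are joined noncrossingly, there is extra left/right information at each of the internal gaps that the bare merge tree forgets.

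The key step is therefore to encode this extra data faithfully, and the transparent way to do so is to pass to the dual picture. Following \cite{mccammond2017noncrossing}, I would attach to each commutation class a $(2k{+}2)$-angulation of the regular $2N$-gon with vertices labelled $1,\bar1,2,\bar2,\ldots,N,\bar N$ (the doubled polygon already used for the Kreweras complement in \Cref{fig:nckrew}): each join becomes one $(2k{+}2)$-gon of the dissection, and the nesting order of the joins becomes the adjacency of the cells. Granting this bijection, the enumeration is immediate. A convex $2N$-gon has $2N=2kn+2$ vertices, a dissection into $(2k{+}2)$-gons therefore has exactly $n$ cells, and the cell incident to a fixed root edge has its remaining $2k{+}1$ edges each subtending an independent sub-polygon carrying a sub-dissection; this gives precisely $A=1+xA^{2k+1}$. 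Equivalently, one invokes the standard formula that the number of dissections of a convex $\bigl((2k)n+2\bigr)$-gon into $(2k{+}2)$-gons is $\ran(n,2k{+}1,1)$.

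The main obstacle is establishing that dual bijection rigorously. Well-definedness on commutation classes should be routine, since commuting joins correspond to cells sharing no edge and hence insertable in either order. The two substantive points are surjectivity --- that every $(2k{+}2)$-angulation arises from a genuine \emph{reduced} factorization, for which I would order the cells by any linear extension of the poset rooted at the fixed boundary edge and check that the resulting product of increasing $(k{+}1)$-cycles equals $c_N$ with the correct length $n$, leaning on the transitivity of the Hurwitz action (\Cref{thm:snaction}) to see that such orderings are consistent --- and injectivity, for which the increasing normal form of \Cref{rem:increasing_cycles} lets me recover the cells and their incidences from any representative word. Everything after the bijection is the standard Fu\ss--Catalan bookkeeping and presents no difficulty.
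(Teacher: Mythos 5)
Your proposal is correct and follows essentially the same route as the paper: although the theorem is formally attributed to Goulden--Jackson, the paper immediately constructs (following McCammond) exactly the dual bijection you describe, between commutation classes and $(2k{+}2)$-angulations of a convex $2N$-gon (\Cref{thm:1modk_factorizations_polygon_dissections}), and combines it with the classical count of such dissections (\Cref{thm:1modk_polygon_dissections}) to obtain $\ran(n,2k{+}1,1)$. Your only deviation is re-deriving that dissection count from the functional equation $A=1+xA^{2k+1}$ via Lagrange inversion rather than citing it, which is standard bookkeeping and changes nothing essential.
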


\begin{remark}
  This result was proven for $k=1$ by Eidswick and Longyear~\cites{eidswick1989short,longyear1989peculiar}, while Springer solved a more general factorization problem in~\cite{springer1996factorizations}.

  More recently, such factorizations for $k=1$ were considered in the context of the associahedron by McCammond~\cite{mccammond2017noncrossing}, which led us to develop the combinatorics of this section.
\end{remark}

There is another well-known set with this same cardinality.

\begin{theorem}[\cite{fuss91solutio}]\label{thm:1modk_polygon_dissections}
  The number of $(2k{+}2)$-angulations of a convex $2N$-gon is given by ${\ran(n,2k{+}1,1)}$.
\end{theorem}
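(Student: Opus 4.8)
The plan is to enumerate $(2k{+}2)$-angulations by putting them in bijection with $(2k{+}1)$-ary plane trees. First I would record the numerical constraints that the angulation condition forces. If a convex $V$-gon is cut into $m$ cells, each a $c$-gon, by noncrossing diagonals, then summing edges over all cells counts each of the $V$ boundary edges once and each of the $m-1$ diagonals twice, giving $cm = V + 2(m-1)$, hence $V = (c-2)m + 2$. With $c = 2k{+}2$ and $V = 2N = 2kn+2$ this forces $m = n$. So every $(2k{+}2)$-angulation of the $2N$-gon uses exactly $n$ cells and $n-1$ diagonals, and the enumeration is of a single homogeneous family.

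Next I would construct the dual tree. Mark the boundary edge of the $2N$-gon between vertices $N' := 2N$ and $1$ as the \emph{base}, declare the cell incident to it to be the root, and read that cell's remaining $2k{+}1$ edges in the cyclic order inherited from the polygon as an ordered list of slots. A slot that is a diagonal leads to a uniquely determined neighbouring cell, whose subtree is obtained recursively; a slot that is a boundary edge becomes a leaf. This produces a plane tree in which every cell is an internal node with exactly $2k{+}1$ children, i.e.\ a $(2k{+}1)$-ary tree with $n$ internal nodes. Reading such a tree backwards as nested cells glued along the diagonals encoded by its internal edges recovers the dissection, so the correspondence is a bijection. A bookkeeping check confirms consistency of the leaf count: the tree has $(2k{+}1)n - (n-1) = 2kn+1 = 2N-1$ leaves, exactly the number of non-base boundary edges.

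Finally I would invoke the classical Fu{\ss}--Catalan count of $p$-ary plane trees: those with $n$ internal nodes number $\ran(n,p,1)$, exactly as used in \Cref{rem:tree_bijection} (there with $p = k{+}1$). Taking $p = 2k{+}1$ gives $\ran(n,2k{+}1,1)$, as claimed. A self-contained alternative is to argue via generating functions: letting $F = \sum_{m\geq 0} d_{m}x^{m}$, where $d_{m}$ counts $(2k{+}2)$-angulations of the $((2k)m{+}2)$-gon with marked base and $m$ cells (with $d_{0}=1$ for the degenerate single edge), a decomposition at the root cell yields $F = 1 + xF^{2k+1}$, and Lagrange inversion extracts $[x^{n}]F = \tfrac{1}{n}\binom{(2k+1)n}{n-1} = \ran(n,2k{+}1,1)$.

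The main obstacle I expect lies not in the enumeration but in making the bijection airtight. One must fix the orientation and rooting conventions so that the cyclic order of a cell's edges unambiguously determines the left-to-right order of its children, and verify that ``following a diagonal'' always lands on a single well-defined neighbouring cell independent of any auxiliary choices. Once the slot-filling rule is pinned down, invertibility is routine, and the cell-count identity from the first step guarantees that the arities and node counts line up precisely.
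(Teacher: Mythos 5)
Your proof is correct, but there is nothing in the paper to compare it against: the paper states this theorem with the citation to von Fu{\ss} (1791) standing in place of a proof, treating it as a classical fact. Your argument is the standard one for that classical fact, and all three steps check out. The edge-count $V=(c-2)m+2$ correctly forces exactly $n$ cells and $n-1$ diagonals; rooting at a base edge and reading each cell's remaining $2k+1$ sides in the inherited cyclic order is the usual dual-tree bijection onto $(2k{+}1)$-ary plane trees with $n$ internal nodes (your leaf count $(2k{+}1)n-(n-1)=2kn+1=2N-1$ confirms the bookkeeping); and the Fu{\ss}--Catalan count of $p$-ary trees, whether quoted as in \Cref{rem:tree_bijection} or derived from $F=1+xF^{2k+1}$ by Lagrange inversion, gives
\begin{displaymath}
  \frac{1}{n}\binom{(2k+1)n}{n-1} \;=\; \frac{1}{2kn+1}\binom{(2k+1)n}{n} \;=\; \frac{1}{(2k+1)n+1}\binom{(2k+1)n+1}{n} \;=\; \ran(n,2k+1,1),
\end{displaymath}
which matches the paper's normalization $\ran(n,p,r)=\frac{r}{np+r}\binom{np+r}{n}$. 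The one caveat you raise yourself---fixing rooting and orientation conventions so the recursion is unambiguous and invertible---is indeed the only place where care is needed, and it is routine; your proposal would serve as a self-contained proof of a statement the paper merely cites.
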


Following \cite{mccammond2017noncrossing}*{Section~3}, we now describe a bijection between the objects of \Cref{thm:1modk_polygon_dissections} and ~\Cref{thm:1modk_commutation_equivalent}.

\begin{theorem}\label{thm:1modk_factorizations_polygon_dissections}
  For $k,n\geq 1$, there is a bijection $\Theta$ between the commutation equivalence classes  of reduced factorizations of $(1\;2\;\ldots\;N)$ into $(k{+}1)$-cycles, and the set of $(2k{+}2)$-angulations of a convex $2N$-gon.
\end{theorem}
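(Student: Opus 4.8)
The plan is to construct $\Theta$ explicitly by sending each of the $n$ factors of a reduced factorization to one of the $n$ cells of a $(2k{+}2)$-angulation, following the $k=1$ construction of \cite{mccammond2017noncrossing}*{Section~3} and replacing transpositions and quadrilaterals by $(k{+}1)$-cycles and $(2k{+}2)$-gons. Fix the $2N$-gon with vertices labeled clockwise by $1,\bar1,2,\bar2,\ldots,N,\bar N$, the same labeling used for the Kreweras complement in \Cref{fig:nckrew}, so that an unbarred vertex $i$ records a primal element of $w$ and the barred vertex $\bar i$ a dual element of $\Krew(w)$. Given $\mathbf t = t_1t_2\cdots t_n\in\Fact_k(c_N)$, I form the prefix products $\id=w_0\lessdot_k w_1\lessdot_k\cdots\lessdot_k w_n=c_N$; by \Cref{cor:covering_relations} each cover $w_{j-1}\lessdot_k w_j$ merges exactly $k{+}1$ cycles of $w_{j-1}$ into a single increasing cycle of $w_j$ (using \Cref{rem:increasing_cycles}), and dually splits one block of $\Krew(w_{j-1})$ into $k{+}1$ blocks. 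I associate to $t_j$ the cell $\Theta_j$ whose $2k{+}2$ vertices alternate between the primal vertices in the support of $t_j$ and the barred vertices marking the corresponding dual split; this is the natural $(2k{+}2)$-gon interpolating between $w_{j-1}$ and $w_j$ in the joint primal/dual picture.

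First I would verify that $\Theta(\mathbf t)\defs\{\Theta_1,\ldots,\Theta_n\}$ is a genuine $(2k{+}2)$-angulation. Each $w_j$ is a noncrossing partition by \Cref{thm:k_indivisible_equivalent}, so drawing $w_j$ on the primal vertices together with $\Krew(w_j)$ on the barred vertices yields a noncrossing diagram; arguing by induction on $n$, the single merge recorded by $t_j$ inserts the cell $\Theta_j$ into the region left free at step $j{-}1$, so the cells have pairwise disjoint interiors and their union is the whole $2N$-gon. The shape is then forced by a count: with $n$ cells, each a $(2k{+}2)$-gon, one has $n(2k{+}2)$ edge-incidences, of which $2N=2kn+2$ are boundary edges and the rest are diagonals counted twice, giving exactly $n-1$ diagonals, which is precisely a dissection into $(2k{+}2)$-gons.

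Next I would show that $\Theta$ is constant on commutation classes. If $t_j$ and $t_{j+1}$ have disjoint support, the Hurwitz move exchanging them alters no prefix partition outside the two steps, and the cells $\Theta_j,\Theta_{j+1}$ occupy geometrically independent regions sharing no diagonal; hence the unordered family of cells is unchanged and $\Theta$ descends to a map on commutation-equivalence classes. Conversely, two adjacent factors with intersecting support yield cells sharing a diagonal, which pins down their relative order, so I expect a commutation class to correspond to a family of cells carrying no residual ordering data, i.e.\ to a dissection. To finish, it suffices to prove injectivity of the induced map, since both sides are enumerated by the Fu{\ss}--Catalan number $\ran(n,2k{+}1,1)$ by \Cref{thm:1modk_commutation_equivalent,thm:1modk_polygon_dissections}. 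Injectivity I would obtain by exhibiting an inverse through \emph{ear peeling}: any $(2k{+}2)$-angulation has a cell all but one of whose edges lie on the boundary of the $2N$-gon; reading its primal vertices as an increasing $(k{+}1)$-cycle and recursing on the smaller $2(k(n-1){+}1)$-gon obtained by deleting that ear recovers a factorization, well defined up to the order in which independent ears are removed, that is, up to commutation.

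I expect the main obstacle to be the planarity bookkeeping in the second step: verifying rigorously that the $n$ cells are pairwise noncrossing and that a single merge $w_{j-1}\lessdot_k w_j$ inserts its cell into exactly the available region. This is the one place where McCammond's $k=1$ argument does not transfer verbatim, because each cell now carries $k{+}1$ primal and $k{+}1$ dual vertices rather than two of each, and one must check that the $k$ internal sub-arcs within a single $(k{+}1)$-cycle never separate vertices that a later factor must join.
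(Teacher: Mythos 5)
Your forward construction is essentially the paper's bijection---cells of the dissection correspond to factors, with the odd (primal) vertices carrying the support of the factor and the barred vertices the dual data---but the paper organizes the proof around a different intermediate object that resolves exactly the two issues you leave open. The paper first draws the convex hulls of the $n$ factors, \emph{unlabeled}, on the $N$-gon: minimality of the factorization forces these hulls to intersect pairwise in at most one vertex and to cover every vertex, and the counterclockwise order around each shared vertex recovers the relative order of the factors meeting there. This defines a partial order on the hulls whose linear extensions are precisely the factorizations in one commutation class, so the unlabeled hull picture \emph{is} the commutation class; the dissection is then obtained by drawing a diagonal from each shared vertex $a$ to the opposite barred vertex $\bar{b}$, and the converse simply reads off the odd vertices of each cell. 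Your ``planarity bookkeeping'' obstacle is absorbed into this picture for free, and well-definedness on commutation classes and injectivity come simultaneously.

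The genuine gap in your proposal is the ear-peeling inverse. Peeling ears and recursing is \emph{not} well defined ``up to the order in which independent ears are removed,'' because an ear's factor cannot simply be appended (or prepended) at the time of removal: its position in the factorization is constrained by the rotational order at shared vertices, which your recursion discards. Concretely, take $k=1$, $n=3$ and the quadrangulation of the $8$-gon with diagonals from vertex $1$ to $\bar{2}$ and from $1$ to $\bar{3}$ (node furthest left at rank three in \Cref{fig:linear_2_cambrian}). Its three cells have odd-vertex sets $\{1,2\}$, $\{1,3\}$, $\{1,4\}$, and both $(1\;2)$ and $(1\;4)$ are ears; but the unique reduced factorization of $(1\;2\;3\;4)$ with these factors is $(1\;4)(1\;3)(1\;2)$ (a singleton commutation class, since no two factors commute), so $(1\;4)$ is forced to be \emph{first} and $(1\;2)$ \emph{last}. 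Peeling the ears in the other order and appending yields $(1\;3)(1\;4)(1\;2)=(1\;2\;4\;3)$ or $(1\;2)(1\;3)(1\;4)=(1\;4\;3\;2)$, neither of which is a factorization of $(1\;2\;3\;4)$ at all. To repair this you must record, for each diagonal, on which side of it the adjacent cells' factors must appear---that is, exactly the counterclockwise-order rule the paper builds into its hull picture. (Your fallback of proving injectivity and invoking \Cref{thm:1modk_commutation_equivalent,thm:1modk_polygon_dissections} for equal cardinalities is logically sound, but the injectivity argument needs this same ingredient, since distinct commutation classes must be shown to yield distinct dissections.)
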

\begin{proof}
  Let $\mathbf{t}=t_1t_2\cdots t_n\in\Fact_k(c_N)$.  We visualize $\mathbf{t}$ by drawing the convex hulls of the factors $t_1$, $t_2$, $\ldots$, $t_n$ on a convex polygon with $N$ labeled vertices.  Since $\mathbf{t}$ is a minimal factorization of $c_N$, these convex hulls intersect pairwise in at most one vertex, and every vertex is contained in at least one convex hull.  If we were to label these hulls with the order in which the factor appeared this would be a bijection---forgoing these labels records only the commutation class of the factorization: for every vertex at which at least two convex hulls meet, we can determine the order of the corresponding factors by taking the order counterclockwise around the vertex inside the polygon.  This produces a partial order on the convex hulls, every linear extension of this partial order is a reduced factorization of $c_N$, and any two linear extensions differ only by a commutation of letters.

  We now perform a procedure very similar to the Kreweras complement on these unlabeled convex hulls.  Insert a vertex labeled $\bar{a}$ in between $a$ and $a+1$ (where we identify $N+1$ and $1$).  When two convex hulls intersect in a vertex~$a$, there is a unique vertex~$\bar{b}$ that lies ``opposite'' to $a$ between the convex hulls intersecting in~$a$.  Connect $a$ and $\bar{b}$ by a line segment, which we call a \defn{diagonal}.  Two diagonals are \defn{adjacent} if they intersect a common convex hull.  Removing the convex hulls leaves only the diagonals, which by construction form a $(2k{+}2)$-angulation $\Theta(\mathbf{t})$ of a $2N$-gon.

  Conversely, any diagonal connects an even and an odd node in a $(2k{+}2)$-angulation of a convex $2N$-gon.  The convex hulls of the odd vertices in each $(2k{+}2)$-gon now give the factors in a commutation class of a factorization from $\Fact_k(c_N)$. 
\end{proof}

\begin{figure}
  \centering
  \begin{tikzpicture}\small
    \draw(1,1) node{\begin{tikzpicture}
        \polygon{(0,0)}{a}{16}{1.5}{.8}{$1$,$2$,$3$,$4$,$5$,$6$,$7$,$8$,$9$,$10$,$11$,$12$,$13$,$14$,$15$,$16$}[1.2];
        \begin{pgfonlayer}{background}
          \draw[fill=gray!50!white] (a4) to[bend right=30] (a5) to[bend left=10] (a15) to[bend right=30] (a16) to[bend right=30] (a4);
          \draw[fill=gray!50!white] (a1) to[bend right=30] (a2) to[bend right=30] (a3) to[bend left=30] (a16) to[bend right=30] (a1);
          \draw[fill=gray!50!white] (a10) to[bend right=30] (a11) to[bend right=30] (a12) to[bend right=30] (a13) to[bend left=30] (a10);
          \draw[fill=gray!50!white] (a5) to[bend right=30] (a6) to[bend right=10] (a13) to[bend right=30] (a14) to[bend right=10] (a5);
          \draw[fill=gray!50!white] (a6) to[bend right=30] (a7) to[bend right=30] (a8) to[bend right=30] (a9) to[bend left=30] (a6);
        \end{pgfonlayer}
      \end{tikzpicture}};
    \draw(6,1) node{\begin{tikzpicture}
        \polygonwhite{(0,0)}{a}{32}{1.5}{.8}{$ $,$ $,$ $,$ $,$ $,$ $,$ $,$ $,$ $,$ $,$ $,$ $,$ $,$ $,$ $,$ $,$ $,$ $,$ $,$ $,$ $,$ $,$ $,$ $,$ $,$ $,$ $,$ $,$ $,$ $,$ $,$ $}[1.2];
        \begin{pgfonlayer}{background}
          \draw[fill=black,thick](a31) to (a6);
          \draw[fill=black,thick](a9) to (a28);
          \draw[fill=black,thick](a11) to (a18);
          \draw[fill=black,thick](a25) to (a18);
        \end{pgfonlayer}
      \end{tikzpicture}};
    \draw(1,6.5) node{\begin{tikzpicture}
        \polygon{(0,0)}{a}{16}{1.5}{.8}{$1$,$2$,$3$,$4$,$5$,$6$,$7$,$8$,$9$,$10$,$11$,$12$,$13$,$14$,$15$,$16$}[1.2];
        \begin{pgfonlayer}{background}
          \draw[fill=gray!50!white] (a4) to[bend right=30] (a5) to[bend left=10] (a15) to[bend right=30] (a16) to[bend right=30] (a4);
          \draw[fill=gray!50!white] (a1) to[bend right=30] (a2) to[bend right=30] (a3) to[bend left=30] (a16) to[bend right=30] (a1);
          \draw[fill=gray!50!white] (a10) to[bend right=30] (a11) to[bend right=30] (a12) to[bend right=30] (a13) to[bend left=30] (a10);
          \draw[fill=gray!50!white] (a5) to[bend right=30] (a6) to[bend right=10] (a13) to[bend right=30] (a14) to[bend right=10] (a5);
          \draw[fill=gray!50!white] (a13) to[bend left=15] (a7) to[bend right=30] (a8) to[bend right=30] (a9) to[bend right=30] (a13);
        \end{pgfonlayer}
      \end{tikzpicture}};
    \draw(6,6.5) node{\begin{tikzpicture}
        \polygonwhite{(0,0)}{a}{32}{1.5}{.8}{$ $,$ $,$ $,$ $,$ $,$ $,$ $,$ $,$ $,$ $,$ $,$ $,$ $,$ $,$ $,$ $,$ $,$ $,$ $,$ $,$ $,$ $,$ $,$ $,$ $,$ $,$ $,$ $,$ $,$ $,$ $,$ $}[1.2];
        \begin{pgfonlayer}{background}
          \draw[fill=black,thick](a31) to (a6);
          \draw[fill=black,thick](a9) to (a28);
          \draw[fill=black,thick](a25) to (a12);
          \draw[fill=black,thick](a25) to (a18);
        \end{pgfonlayer}
      \end{tikzpicture}};
      \draw(3.6,1.2) node{$\Theta$};
      \draw[->](3.2,1) to (4,1);
      \draw(3.6,6.7) node{$\Theta$};
      \draw[->](3.2,6.5) to (4,6.5);
      \draw[->](1,3.2) to (1,4.2);
      \draw(-.7,3.7) node{inverse Hurwitz move}; 
      \draw[->](6,3.2) to (6,4.2);
      \draw(7.9,3.7) node{clockwise diagonal move};
  \end{tikzpicture}
  \caption{Illustration of the bijection $\Theta$ from ~\Cref{thm:1modk_factorizations_polygon_dissections} for $n=5$ and $k=3$.}
\end{figure}
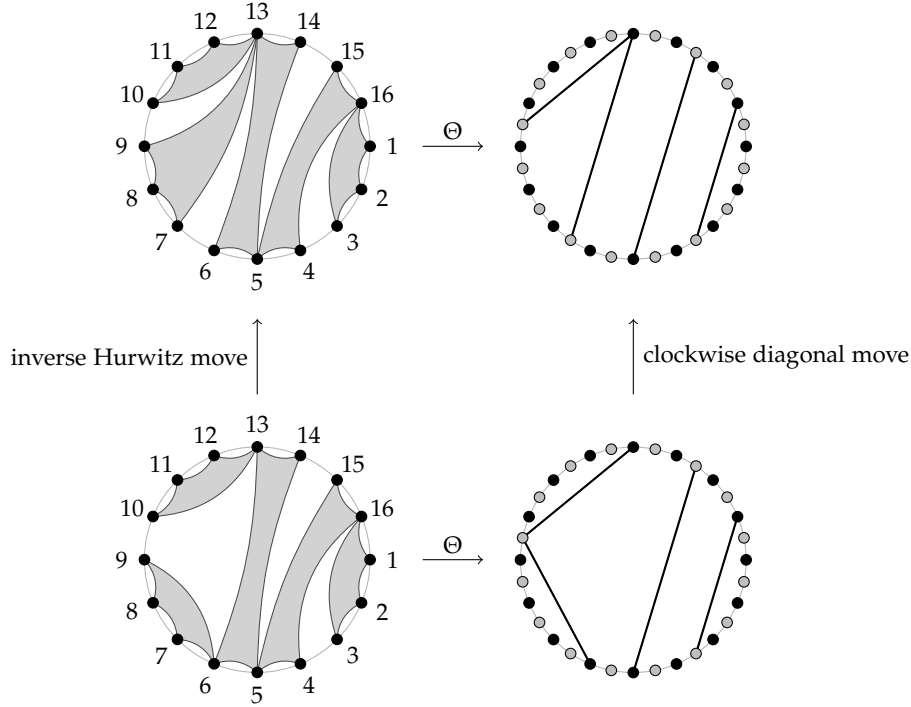

\begin{proposition}\label{prop:hurwitz_shifts}
  A Hurwitz move on a commutation-class of a reduced factorization corresponds to rotating a diagonal in the corresponding $(2k{+}2)$-angulation one step.
\end{proposition}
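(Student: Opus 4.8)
The plan is to track, move by move, what a single Hurwitz operator does to the intersection pattern of the convex hulls underlying $\Theta$, and then to read off the induced change of diagonals. First I would reduce to the essential local situation. Working on a commutation class, the operator $\sigma_i$ acts trivially unless the two factors $t_i$ and $t_{i+1}$ share a vertex, since otherwise it is merely a commutation; and by minimality of the factorization (as used in the proof of \Cref{thm:1modk_factorizations_polygon_dissections}) two factors meet in at most one vertex. So I may assume $t_i$ and $t_{i+1}$ meet in exactly one vertex $a$, with the rest of their supports disjoint, and that they are consecutive in some linear extension of the partial order (they are comparable, being counterclockwise-ordered at $a$). The move alters only these two factors and preserves the product $t_i t_{i+1}$, hence it fixes every diagonal of $\Theta(\mathbf t)$ except the one created at the intersection vertex $a$.

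Next I would compute the effect on that shared vertex. Writing $\sigma_i\colon (t_i,t_{i+1})\mapsto (t_{i+1},\,t_{i+1}^{-1}t_i t_{i+1})$, the factor $t_{i+1}$ is left unchanged while $t_i$ is replaced by its conjugate, whose support is $t_{i+1}^{-1}(\mathrm{supp}(t_i))$; since $t_{i+1}^{-1}$ fixes $\mathrm{supp}(t_i)\setminus\{a\}$ pointwise and carries $a$ to its predecessor in the cycle $t_{i+1}$, the new intersection vertex is $t_{i+1}^{-1}(a)$, a cyclic neighbour of $a$ on the unchanged hull $t_{i+1}$. Symmetrically, $\sigma_i^{-1}\colon (t_i,t_{i+1})\mapsto(t_it_{i+1}t_i^{-1},t_i)$ fixes $t_i$ and slides the intersection from $a$ to its successor $t_i(a)$ on the hull $t_i$. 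In either case the move leaves one of the two hulls unchanged and advances the unique intersection vertex to the adjacent original vertex along that hull, the direction being fixed by the sign of the move.

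Finally I would translate this shift into a rotation of the corresponding diagonal. By construction of $\Theta$, the diagonal attached to $a$ has its odd endpoint at $a$ and its barred endpoint at the vertex $\bar b$ lying opposite $a$ in the gap between the two hulls, and the two cells of the angulation adjacent to this diagonal are precisely the $(2k{+}2)$-gons of the factors $t_i$ and $t_{i+1}$. Their union is a $(4k{+}2)$-gon whose original and barred vertices alternate, and whose admissible separating diagonals are exactly the $2k{+}1$ segments joining two vertices that are $2k{+}1$ apart (these necessarily join an original to a barred vertex, so each is a legal diagonal). Advancing the intersection vertex to the neighbouring original vertex of one cell moves the odd endpoint one step along the boundary of the $(4k{+}2)$-gon; I would then verify, using the opposite-vertex rule, that the barred endpoint moves correspondingly, so that $(a,\bar b)$ is carried to the adjacent admissible diagonal. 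Thus each Hurwitz move is a one-step rotation, and the two signs give the two rotational directions (matching the clockwise/counterclockwise diagonal moves of the figure). Since every admissible separating diagonal of the $(4k{+}2)$-gon arises from exactly one neighbouring intersection vertex, the correspondence between Hurwitz moves and one-step rotations is a bijection.

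The main obstacle is this last verification. Once the intersection vertex is known to advance by one along a cell boundary, it is intuitively clear that the separating diagonal rotates one step; but making this precise requires tracking how the opposite barred vertex $\bar b$ responds to the advance of $a$, confirming that the resulting segment is again one of the long diagonals of the $(4k{+}2)$-gon and is offset by exactly one position, and keeping the rotational direction consistent between $\sigma_i$ and $\sigma_i^{-1}$. This is a purely local computation inside the two adjacent cells, but the bookkeeping of the opposite-vertex rule under the shift of $a$ is the delicate point of the argument.
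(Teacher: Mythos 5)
Your first two steps are correct and coincide with the paper's own argument: you reduce to the unique common vertex $a$ of $t_i$ and $t_{i+1}$, and your support computation for the conjugates --- $\sigma_i$ keeps $t_{i+1}$ and moves the intersection to $t_{i+1}^{-1}(a)$, while $\sigma_i^{-1}$ keeps $t_i$ and moves it to $t_i(a)$ --- is exactly the paper's description of the new factors (your $t_{i+1}^{-1}(a)$ is the paper's $b_{i+1}$), phrased more cleanly. The gap is in your third step, and the specific claim you make there is false. Advancing the intersection vertex to the neighbouring original vertex along the hull of $t_{i+1}$ does \emph{not} move the odd endpoint ``one step along the boundary of the $(4k{+}2)$-gon'': since original and barred vertices alternate, the vertex one boundary step from $a$ is barred, and the neighbouring original vertex $t_{i+1}^{-1}(a)$ lies two steps away inside the cell of $t_{i+1}$, hence $2k$ steps along the boundary of the union of the two cells. (Check $k=2$, $\mathbf{t}=(1\;2\;3)(3\;4\;5)$, $N=5$: applying $\sigma_1$ moves the intersection from $3$ to $5$, four boundary steps of the $10$-gon, while the diagonal moves from $3$--$\bar{5}$ to $5$--$\bar{2}$, one step.) Your picture of each endpoint sliding one step while keeping its parity is impossible in principle: a one-step rotation of any chord of the $(4k{+}2)$-gon must exchange an original endpoint for a barred one, because adjacent vertices have opposite parity. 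The two descriptions happen to yield the same chord only because the separating diagonals are antipodal (endpoints $2k{+}1$ apart in a $(4k{+}2)$-gon), so shifting both endpoints by $2k$ positions coincides with shifting the chord by one step in the opposite direction --- and this reconciliation is precisely the ``delicate point'' you defer and never supply.

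The missing fact, which is what the paper's proof actually records, is the identification of \emph{both} endpoints of the new diagonal together with the parity swap. Inside the cell of $t_{i+1}$, the barred endpoint $\bar{b}$ of the old diagonal lies between $t_{i+1}^{-1}(a)$ and $a$, so the new odd endpoint $t_{i+1}^{-1}(a)=b_{i+1}$ is the vertex adjacent to the \emph{old barred} endpoint $\bar{b}$; and the new barred endpoint is $\overline{a-1}$, the vertex adjacent to the \emph{old odd} endpoint $a$ inside the cell of $t_i$. Hence the new diagonal $\bigl(b_{i+1},\overline{a-1}\bigr)$ has each endpoint one step counterclockwise from an endpoint of the old diagonal $(a,\bar{b})$, with odd and barred ends exchanged: this is the one-step rotation, and symmetrically $\sigma_i^{-1}$ produces the clockwise rotation $\bigl(b_i,\bar{a}\bigr)$. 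Without identifying the new barred endpoint (and abandoning the parity-preserving mechanism), your sketch does not establish the proposition; with that correction, it becomes the paper's proof.
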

\begin{proof}
  Let $\mathbf{t}=t_1t_2\cdots t_n\in\Fact_k(c_N)$, and choose $i\in[n-1]$ such that $t_i$ and $t_{i+1}$ do not commute.  Then there is a unique integer $a$ which belongs to both $t_i$ and $t_{i+1}$.  Let $\bar{b}$ be the unique vertex in between the convex hulls of $t_i$ and $t_{i+1}$ visible from $a$.  Moreover, let $b_{i}$ denote the smallest element of $t_{i}$ greater or equal to $b+1$, and let $b_{i+1}$ denote the biggest element of $t_{i+1}$ less or equal to $b$.  

  Now, $\sigma_{i}\mathbb{t}$ is obtained by removing $a$ from $t_{i}$ and adding $b_{i+1}$ in the appropriate position (thus obtaining $t_{i+1}^{-1}t_it_{i+1}$), and by exchanging the order of these two factors.  Analogously, $\sigma_{i}^{-1}\mathbb{t}$ is obtained by removing $a$ from $t_{i+1}$ and adding $b_{i}$ in the appropriate position (thus obtaining $t_{i}t_{i+1}t_{i}^{-1}$), and by exchanging the order of these two factors.

  In view of the bijection $\Theta$ from \Cref{thm:1modk_factorizations_polygon_dissections} the $(2k{+}2)$-angulations $\Theta(\mathbf{t})$ and $\Theta(\sigma_i\mathbf{t})$ (respectively $\Theta(\sigma_i^{-1}\mathbf{t})$) differ by only shifting a unique diagonal.  More precisely, the diagonal connecting $a$ and $\bar{b}$ in $\Theta(\mathbf{t})$ is replaced by the diagonal connecting $b_{i+1}$ and $\bar{a}-1$ in $\Theta(\sigma_{i}\mathbf{t})$ (respectively by the diagonal connecting $b_{i}$ and $\bar{a}$ in $\Theta(\sigma_{i}^{-1}\mathbf{t})$).  Hence, the action of $\sigma_{i}$ (respectively $\sigma_{i}^{-1}$) corresponds to shifting a diagonal in counterclockwise (respectively clockwise) direction under $\Theta$.
\end{proof}

In \cite{stump18cataland}*{Section~6.6}, a lattice was constructed parametrized by a Coxeter group $W$, a Coxeter element $c\in W$, and an integer $m$; the \defn{$m$-Cambrian lattice} of $W$ with respect to the orientation $c$.  In the case where $W=\Symmetric_{n}$, and $c$ is given as the product of the simple transpositions in lexicographic order, the corresponding $m$-Cambrian lattice was realized combinatorially in \cite{freeze16combinatorial}*{Chapter~3} as a lattice on $(m{+}2)$-angulations of a convex $(mn{+}2)$-gon, where the cover relations are given by rotating a diagonal one step clockwise.  Let us refer to this lattice as the \defn{$(m,n)$-Cambrian lattice}.

\begin{corollary}\label{cor:2k_cambrian_lattice}
  The $(2k,n)$-Cambrian lattice is isomorphic to the poset whose elements are the reduced factorizations of $c_N$ up to commutation equivalence, with the cover relations given by Hurwitz moves.
\end{corollary}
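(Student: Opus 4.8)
The plan is to deduce this purely by gluing together the two results that immediately precede it, after reconciling the two combinatorial models. First I would record the arithmetic coincidence that makes the underlying sets agree: taking $m=2k$, the $(2k,n)$-Cambrian lattice is realized on $(2k{+}2)$-angulations of a convex $(2kn{+}2)$-gon, and since $2kn+2 = 2(kn+1) = 2N$, this is exactly the set of $(2k{+}2)$-angulations of a convex $2N$-gon. By \Cref{thm:1modk_factorizations_polygon_dissections}, the map $\Theta$ is a bijection between commutation equivalence classes of reduced factorizations in $\Fact_k(c_N)$ and precisely these angulations. So $\Theta$ already identifies the two underlying sets, and it remains only to check that it respects the order.

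Next I would transport the cover relations through $\Theta$. The $(2k,n)$-Cambrian lattice is defined so that its cover relations are the clockwise one-step rotations of a single diagonal. By \Cref{prop:hurwitz_shifts}, under $\Theta$ a clockwise one-step diagonal rotation corresponds exactly to an inverse Hurwitz move $\sigma_i^{-1}$ on a pair of adjacent, non-commuting factors (while $\sigma_i$ corresponds to the counterclockwise rotation). Orienting the Hurwitz-move graph on commutation classes so that $\mathbf{t}\lessdot\sigma_i^{-1}\mathbf{t}$, I would conclude that $\Theta$ carries this directed cover graph onto the cover graph of the $(2k,n)$-Cambrian lattice. Since a finite poset is recovered from its Hasse diagram, the identification of underlying sets together with the identification of oriented cover relations yields the claimed poset isomorphism; in particular the Hurwitz-move poset thereby inherits the lattice structure.

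The main obstacle I expect is establishing the \emph{exactness} of the cover-relation correspondence in both directions, rather than merely that each Hurwitz move produces a rotation. Concretely, I must verify that every legal single-step clockwise rotation of a diagonal in an arbitrary $(2k{+}2)$-angulation is realized by \emph{some} inverse Hurwitz move on a representative factorization, and that this assignment is a bijection on cover relations; this amounts to checking, via the local data $(a,\bar b,b_i,b_{i+1})$ appearing in the proof of \Cref{prop:hurwitz_shifts}, that rotatable diagonals are in bijection with non-commuting adjacent pairs of factors, so that no rotations are missed and none are double-counted. A secondary point to confirm is that the orientation convention (clockwise $\leftrightarrow \sigma_i^{-1}$) is well defined on commutation classes, i.e.\ independent of the chosen linear extension within a class; this follows because $\Theta$ is defined on commutation classes and commuting Hurwitz moves fix the diagonals being rotated.
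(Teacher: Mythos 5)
There is a genuine gap, and it sits precisely at your orientation step, not at the ``exactness'' issue you flagged as the main obstacle. Your rule ``orient the Hurwitz-move graph so that $\mathbf{t}\lessdot\sigma_i^{-1}\mathbf{t}$'' does not define a poset, because the Hurwitz action of a single braid generator on an adjacent non-commuting pair with fixed product is cyclic (of order $2k+1$ on commutation classes). Concretely, for $k=1$, $n=3$, $N=4$ as in \Cref{fig:linear_2_cambrian}, repeated application of $\sigma_1^{-1}$ gives
\begin{displaymath}
  (2\;3)\cdot(3\;4)\cdot(1\;4)\;\longrightarrow\;(2\;4)\cdot(2\;3)\cdot(1\;4)\;\longrightarrow\;(3\;4)\cdot(2\;4)\cdot(1\;4)\;\longrightarrow\;(2\;3)\cdot(3\;4)\cdot(1\;4),
\end{displaymath}
a directed $3$-cycle of commutation classes, so the transitive closure of your relation is not antisymmetric. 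The same phenomenon appears on the polygon side: rotating a diagonal clockwise $2k+1$ times returns it to its starting position, so the cover relations of the $(2k,n)$-Cambrian lattice cannot be \emph{all} clockwise rotations. Indeed, in the lattice of \Cref{fig:linear_2_cambrian} one has $(2\;4)\cdot(2\;3)\cdot(1\;4) \lessdot (3\;4)\cdot(2\;4)\cdot(1\;4) \lessdot (2\;3)\cdot(3\;4)\cdot(1\;4)$, where both covers are clockwise rotations, while the third clockwise rotation in the cycle above is the ``wrap-around'' move taking the maximum element down two ranks. So the claim that $\Theta$ carries the $\sigma_i^{-1}$-oriented graph onto the Hasse diagram of the Cambrian lattice is false as stated; your secondary worry (well-definedness on commutation classes) is not where the problem lies.

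What is missing is exactly the ingredient the paper's proof supplies and your proposal never engages with: the labeling convention of \cite{freeze16combinatorial}. The Cambrian lattice lives on a polygon whose vertices carry a specific non-cyclic labeling, and it is this labeling that determines which clockwise rotations are covers (excluding the wrap-around ones). Accordingly, the paper relabels the $2N$-gon of \Cref{thm:1modk_factorizations_polygon_dissections} following \cite{freeze16combinatorial}*{Section~3.2} starting from $\bar{N}$, and anchors the identification by verifying that the factorization $(1\,\ldots\,k{+}1)\cdot(k{+}1\,\ldots\,2k{+}1)\cdots(N{-}k\,\ldots\,N)$ corresponds to the minimum of the $(2k,n)$-Cambrian lattice and $(k{+}1\,\ldots\,2k{+}1)\cdots(1\,\ldots\,k\,N)$ to the maximum; only after this matching of labels do \Cref{thm:1modk_factorizations_polygon_dissections} and \Cref{prop:hurwitz_shifts} transport the Cambrian cover relations to (the appropriate subset of) Hurwitz moves. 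Your set-level identification via $\Theta$ and the move correspondence from \Cref{prop:hurwitz_shifts} are correct and shared with the paper, but without the labeling data ``clockwise'' is not globally ``up,'' and the proof does not go through.
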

\begin{proof}
  Consider the $(2kn+2)$-gon from the proof of~\Cref{thm:1modk_factorizations_polygon_dissections}, labeled clockwise by the numbers $1,\bar{1},2,\bar{2},\ldots,N,\bar{N}$.  We replace these labels as described in \cite{freeze16combinatorial}*{Section~3.2} starting from $\bar{N}$.  Under this substitution, the reduced factorization 
  \begin{displaymath}
    (1,2,\ldots,k{+}1)\cdot(k{+}1,k{+}2,\ldots,2k{+}1)\cdots(N{-}k,N{-}k{+}1,\ldots,N)
  \end{displaymath}
corresponds to the $(2k{+}2)$-angulation of the $(2kn+2)$-gon that is minimal in the $(2k,n)$-Cambrian lattice, and the reduced factorization 
  \begin{displaymath}
    (k{+}1,k{+}2,\ldots,2k{+}1)\cdot(2k{+}1,2k{+}2,\ldots,3k{+}1)\cdots(1,2,\ldots,k,N)
  \end{displaymath}
  corresponds to the $(2k{+}2)$-angulation of the $(2kn+2)$-gon that is maximal.  The claim then follows by~\Cref{thm:1modk_factorizations_polygon_dissections} and~\Cref{prop:hurwitz_shifts}.
\end{proof}

\Cref{fig:linear_2_cambrian} illustrates \Cref{cor:2k_cambrian_lattice} for $n=3$ and $k=1$.

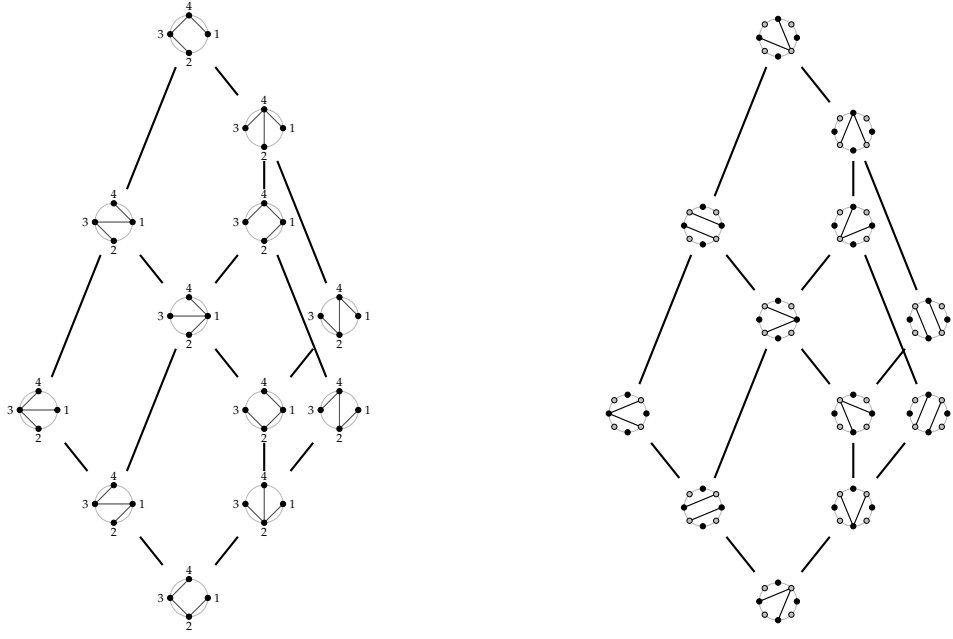
\begin{figure}
  \centering
  \begin{tikzpicture}\small
    \def\x{1};
    \def\y{1.25};
    \def\s{.5};
    \draw(3*\x,1*\y) node[scale=\s,inner sep=.2pt](n1){\ncFour{a1}{a2}{a2}{a3}{a3}{a4}};
    \draw(2*\x,2*\y) node[scale=\s,inner sep=.2pt](n2){\ncFour{a1}{a3}{a1}{a2}{a3}{a4}};
    \draw(4*\x,2*\y) node[scale=\s,inner sep=.2pt](n3){\ncFour{a1}{a2}{a2}{a3}{a2}{a4}};
    \draw(1*\x,3*\y) node[scale=\s,inner sep=.2pt](n4){\ncFour{a2}{a3}{a1}{a3}{a3}{a4}};
    \draw(4*\x,3*\y) node[scale=\s,inner sep=.2pt](n5){\ncFour{a1}{a4}{a1}{a2}{a2}{a3}};
    \draw(5*\x,3*\y) node[scale=\s,inner sep=.2pt](n6){\ncFour{a1}{a2}{a3}{a4}{a2}{a4}};
    \draw(3*\x,4*\y) node[scale=\s,inner sep=.2pt](n7){\ncFour{a1}{a4}{a1}{a3}{a1}{a2}};
    \draw(5*\x,4*\y) node[scale=\s,inner sep=.2pt](n8){\ncFour{a2}{a4}{a1}{a4}{a2}{a3}};
    \draw(2*\x,5*\y) node[scale=\s,inner sep=.2pt](n9){\ncFour{a1}{a4}{a2}{a3}{a1}{a3}};
    \draw(4*\x,5*\y) node[scale=\s,inner sep=.2pt](n10){\ncFour{a3}{a4}{a1}{a4}{a1}{a2}};
    \draw(4*\x,6*\y) node[scale=\s,inner sep=.2pt](n11){\ncFour{a3}{a4}{a2}{a4}{a1}{a4}};
    \draw(3*\x,7*\y) node[scale=\s,inner sep=.2pt](n12){\ncFour{a2}{a3}{a3}{a4}{a1}{a4}};
    \draw[thick](n1) -- (n2);
    \draw[thick](n1) -- (n3);
    \draw[thick](n2) -- (n4);
    \draw[thick](n2) -- (n7);
    \draw[thick](n3) -- (n5);
    \draw[thick](n3) -- (n6);
    \draw[thick](n4) -- (n9);
    \draw[thick](n5) -- (n7);
    \draw[thick](n5) -- (n8);
    \draw[thick](n6) -- (n10);
    \draw[thick](n7) -- (n9);
    \draw[thick](n7) -- (n10);
    \draw[thick](n8) -- (n11);
    \draw[thick](n9) -- (n12);
    \draw[thick](n10) -- (n11);
    \draw[thick](n11) -- (n12);
  \end{tikzpicture}
  \hfill
  \begin{tikzpicture}\small
    \def\x{1};
    \def\y{1.25};
    \def\s{.5};
    \draw(3*\x,1*\y) node[scale=\s,inner sep=.2pt](n1){\polyFour{a3}{a8}{a5}{a8}};
    \draw(2*\x,2*\y) node[scale=\s,inner sep=.2pt](n2){\polyFour{a1}{a4}{a5}{a8}};
    \draw(4*\x,2*\y) node[scale=\s,inner sep=.2pt](n3){\polyFour{a3}{a8}{a3}{a6}};
    \draw(1*\x,3*\y) node[scale=\s,inner sep=.2pt](n4){\polyFour{a5}{a2}{a5}{a8}};
    \draw(4*\x,3*\y) node[scale=\s,inner sep=.2pt](n5){\polyFour{a1}{a6}{a3}{a6}};
    \draw(5*\x,3*\y) node[scale=\s,inner sep=.2pt](n6){\polyFour{a3}{a8}{a7}{a4}};
    \draw(3*\x,4*\y) node[scale=\s,inner sep=.2pt](n7){\polyFour{a1}{a4}{a1}{a6}};
    \draw(5*\x,4*\y) node[scale=\s,inner sep=.2pt](n8){\polyFour{a3}{a6}{a7}{a2}};
    \draw(2*\x,5*\y) node[scale=\s,inner sep=.2pt](n9){\polyFour{a1}{a6}{a5}{a2}};
    \draw(4*\x,5*\y) node[scale=\s,inner sep=.2pt](n10){\polyFour{a1}{a4}{a7}{a4}};
    \draw(4*\x,6*\y) node[scale=\s,inner sep=.2pt](n11){\polyFour{a7}{a2}{a7}{a4}};
    \draw(3*\x,7*\y) node[scale=\s,inner sep=.2pt](n12){\polyFour{a5}{a2}{a7}{a2}};
    \draw[thick](n1) -- (n2);
    \draw[thick](n1) -- (n3);
    \draw[thick](n2) -- (n4);
    \draw[thick](n2) -- (n7);
    \draw[thick](n3) -- (n5);
    \draw[thick](n3) -- (n6);
    \draw[thick](n4) -- (n9);
    \draw[thick](n5) -- (n7);
    \draw[thick](n5) -- (n8);
    \draw[thick](n6) -- (n10);
    \draw[thick](n7) -- (n9);
    \draw[thick](n7) -- (n10);
    \draw[thick](n8) -- (n11);
    \draw[thick](n9) -- (n12);
    \draw[thick](n10) -- (n11);
    \draw[thick](n11) -- (n12);
  \end{tikzpicture}
  \caption{For $k=1$ and $n=3$, the $(2,3)$-Cambrian lattice realized as a lattice of reduced factorizations of $(1\;2\;3\;4)$ up to commutation equivalence (left), and realized as a lattice of quadrangulations of an $8$-gon (right).}
  \label{fig:linear_2_cambrian}
\end{figure}

\section{Nonnesting Partitions}
  \label{sec:1modk_nonnesting}
We also find analogues of the above construction in the world of nonnesting partitions.  Consider the \defn{triangular poset} defined by
\begin{displaymath}
	\Delta_{K} \defs \Bigl(\bigl\{(a,b)\mid 1\leq a<b\leq K\bigr\},\preceq\Bigr),
\end{displaymath}
where $(a,b)\preceq(c,d)$ if and only if $a\geq c$ and $b\leq d$.

We define $\Delta_{N;k}$ to be the induced subposet of $\Delta_{N-(k-1)}$ that consists of all pairs $(a,b)$ with $a\equiv 1\pmod{k}$. 
For $k=3$ and $n=4$ the poset $\Delta_{13;3}$ is shown in \Cref{fig:k_indivisible_nonnesting}.

We call an order ideal of $\Delta_{N;k}$ a \defn{$k$-indivisible nonnesting partition}, and we write $\nn_{N;k}$ for their set; for $k=1$ we get the usual nonnesting partitions. 
We may equivalently view $k$-indivisible nonnesting partitions as north-east paths from $(0,0)$ to $(N,n+1)$ that stay above the boundary path $\mathfrak{b}_{N,k}\defs UR(UR^{k})^n$.  Here we use the letter $U$ to indicate north-steps ($U$ for up), and the letter $R$ to indicate east-steps ($R$ for right).

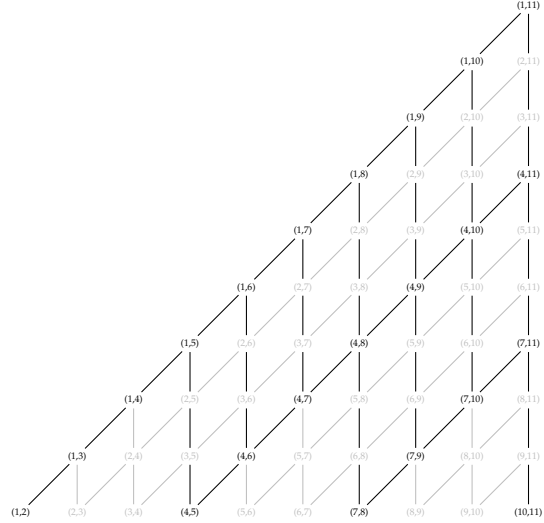
\begin{figure}
  \centering
  \begin{tikzpicture}\small
    \tikzmath{\x=.75;\y=.75;\s=.4;}
    \foreach \k in {1,...,10}{
      \tikzmath{int \dk; \dk = \k+1;}
      \foreach \l in {\dk,...,11}{
        \ifthenelse{\k=1 \OR \k=4 \OR \k=7 \OR \k=10}
          {\draw(\l*\x,{(\l-\k+1)*\y}) node[scale=\s](n\k\l){(\k,\l)};}
          {\draw[gray!50!white](\l*\x,{(\l-\k+1)*\y}) node[scale=\s](n\k\l){(\k,\l)};}
      }
    }
    \foreach \k in {1,...,9}{
      \tikzmath{int \dk; \dk = \k+1;}
      \foreach \l in {\dk,...,10}{
        \tikzmath{int \dl; \dl = \l+1;}
        \ifthenelse{\k=1 \OR \k=4 \OR \k=7}
          {\draw(n\k\l) -- (n\k\dl);}
          {\draw[gray!50!white](n\k\l) -- (n\k\dl);}
      }
    }
    \foreach \d in {1,2,3}{
      \draw[gray!50!white](3*\d*\x,2.15*\y) -- (3*\d*\x,2.85*\y);
      \draw[gray!50!white]({(3*\d+1)*\x},2.15*\y) -- ({(3*\d+1)*\x},2.85*\y);
      \draw[gray!50!white]({(3*\d+1)*\x},3.15*\y) -- ({(3*\d+1)*\x},3.85*\y);
    }
    \foreach \r in {0,1,2}{
      \tikzmath{int \dr; \dr=6-\r*3;}
      \foreach \d in {0,...,\dr}{
        \draw({(5+\r*3+\d)*\x},{(2.15+\d)*\y}) -- ({(5+\r*3+\d)*\x},{(2.85+\d)*\y});
        \draw({(5+\r*3+\d)*\x},{(3.15+\d)*\y}) -- ({(5+\r*3+\d)*\x},{(3.85+\d)*\y});
        \draw({(5+\r*3+\d)*\x},{(4.15+\d)*\y}) -- ({(5+\r*3+\d)*\x},{(4.85+\d)*\y});
      }
    }
  \end{tikzpicture}
  \caption{The poset $\Delta_{13;3}$ inside $\Delta_{11}$.   It has $340=\ran(4,4,2)$ order ideals.}
  \label{fig:k_indivisible_nonnesting}
\end{figure}

 Let $\mathcal{P}_{N;k}$ denote the set of all such paths. Recall that a \defn{$k$-Dyck path} of height $n$ is a north-east path from $(0,0)$ to $(kn,n)$ that stays weakly above the boundary path $(UR^k)^n$.  Let us write $\mathcal{D}_n^{(k)}$ to denote the set of all $k$-Dyck paths.  It follows from \cite{bizley54derivation} that the cardinality of $\mathcal{D}_n^{(k)}$ is the Fu{\ss}--Catalan number $\ran(n,k+1,1)$.

\begin{theorem}\label{thm:1modk_nonnesting_number}
  For $k,n\geq 1$, the set of order ideals of $\Delta_{N;k}$ is in bijection with the set of pairs of $k$-Dyck paths whose heights sum to $n$.  Consequently, we have $\bigl\lvert\nn_{N;k}\bigr\rvert=\ran(n,k+1,2)$.
\end{theorem}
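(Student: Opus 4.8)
The plan is to prove the bijection directly at the level of lattice paths, deducing the enumeration afterwards from the Raney recurrence. Using the reformulation (given above) of $\nn_{N;k}$ as the set $\mathcal{P}_{N;k}$ of north-east paths from $(0,0)$ to $(N,n+1)$ lying weakly above $\mathfrak{b}_{N,k}=UR(UR^k)^n$, I would first encode every path by the sequence $0=y_1\le y_2\le\cdots\le y_{n+1}$ of $x$-coordinates of its successive up-steps. Since the $j$-th up-step of the boundary $\mathfrak{b}_{N,k}$ sits at abscissa $0$ for $j=1$ and at $k(j-2)+1$ for $2\le j\le n+1$, the condition of staying above $\mathfrak{b}_{N,k}$ is exactly $y_1=0$ together with $y_j\le k(j-2)+1$. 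Writing $c_l:=y_{l+1}$, this identifies $\mathcal{P}_{N;k}$ with the set $\mathcal{C}$ of weakly increasing sequences $0\le c_1\le\cdots\le c_n$ satisfying $c_l\le k(l-1)+1$. In the same way a $k$-Dyck path of height $m$ is encoded by a weakly increasing sequence $0\le z_1\le\cdots\le z_m$ with $z_j\le k(j-1)$; note in particular that the first entry of any such sequence is forced to be $0$.

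The heart of the argument is a ``shifted concatenation'' map, a $k$-analogue of the first-return decomposition of a Dyck path. Given an ordered pair consisting of a $k$-Dyck path $D_1$ of height $i$ encoded by $(a_1,\dots,a_i)$ and a $k$-Dyck path $D_2$ of height $n-i$ encoded by $(b_1,\dots,b_{n-i})$, I define a sequence by
\[ c_l=a_l\quad(1\le l\le i),\qquad c_{i+j}=b_j+ki+1\quad(1\le j\le n-i). \]
Geometrically, $c_l=k(l-1)+1$ holds precisely when the corresponding path passes through the valley $\bigl(k(l-1)+1,\,l\bigr)$ of $\mathfrak{b}_{N,k}$, so the construction glues $D_1$ and $D_2$ at the first such valley, the leading up-step of the path accounting for the extra $UR$ in $\mathfrak{b}_{N,k}$. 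The inverse reads off the seam: given $(c_l)\in\mathcal{C}$, let $l^{\ast}$ be the least index with $c_{l^{\ast}}=k(l^{\ast}-1)+1$, or $l^{\ast}=n+1$ if no such index exists, set $i=l^{\ast}-1$, and recover $a_l=c_l$ for $l\le i$ and $b_j=c_{i+j}-ki-1$ for $1\le j\le n-i$.

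The verifications are routine but must be carried out in both directions. For well-definedness of the forward map one checks monotonicity (the only nonobvious inequality being $c_i=a_i\le k(i-1)<ki+1\le c_{i+1}$, using $b_1=0$) and the bound $c_{i+j}=b_j+ki+1\le k(i+j-1)+1$. Because $a_l\le k(l-1)$ for $l\le i$ while $c_{i+1}=ki+1$, the index $i+1$ is genuinely the first place where the ceiling $k(l-1)+1$ is attained, so $l^{\ast}=i+1$ and the split point is recovered correctly. Conversely, starting from $(c_l)\in\mathcal{C}$ one confirms that $(a_l)$ and $(b_j)$ are legal $k$-Dyck encodings; the one point deserving care is $b_j\ge 0$, which follows from $c_{i+j}\ge c_{i+1}=ki+1$ by monotonicity of $c$, while $b_j\le k(j-1)$ is immediate from $c_{i+j}\le k(i+j-1)+1$. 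I expect pinning down this split point---establishing that the first attainment of $k(l-1)+1$ is exactly the seam between the two Dyck factors---to be the main obstacle, since it is what makes the map simultaneously injective and surjective; the surrounding bound and monotonicity bookkeeping is the bulk of the remaining work.

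With the bijection in hand the enumeration is immediate. Summing over the height $i$ of $D_1$ and invoking the count $\bigl\lvert\mathcal{D}_m^{(k)}\bigr\rvert=\ran(m,k+1,1)$ from \cite{bizley54derivation}, the Raney recurrence of \Cref{lem:raney_recursion} with $p=k+1$ and $r=s=1$ gives
\[ \bigl\lvert\nn_{N;k}\bigr\rvert=\sum_{i=0}^{n}\ran(i,k+1,1)\,\ran(n-i,k+1,1)=\ran(n,k+1,2), \]
which is the asserted value, and which equals $\lvert\nc_{N;k}\rvert$ by \Cref{rem:1modk_bijection}.
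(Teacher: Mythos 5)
Your proposal is correct and follows essentially the same route as the paper's proof: your seam at the least index $l^{\ast}$ with $c_{l^{\ast}}=k(l^{\ast}-1)+1$ is exactly the paper's ``first touch'' of the boundary path $\mathfrak{b}_{N;k}$ at an east-step of the form $ik+1$, so your shifted-concatenation map is the paper's cut-and-glue decomposition into a pair of $k$-Dyck paths, merely rewritten in terms of up-step coordinate sequences. Both arguments then finish identically, by summing over the height of the first factor and applying \Cref{lem:raney_recursion} with $r=s=1$ to obtain $\ran(n,k+1,2)$.
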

\begin{proof}
	In terms of paths, this bijection is a standard decomposition that we detail here for completeness. For $\mathfrak{p}\in\mathcal{P}_{N;k}$ we say that $\mathfrak{p}$ \defn{touches} $\mathfrak{b}_{N;k}$ at step $i$, if the $i$-th east steps of $\mathfrak{p}$ and $\mathfrak{b}_{N;k}$ agree.  Every path in $\mathcal{P}_{N;k}$ touches $\mathfrak{b}_{N;k}$ at steps $N-k+1,N-k+2,\ldots,N$.

  Now let $\mathfrak{p}\in\mathcal{P}_{N;k}$ and fix the smallest $i\in\{0,1,\ldots,n\}$ such that $\mathfrak{p}$ touches $\mathfrak{b}_{N;k}$ at step $ik+1$.  We break $\mathfrak{p}$ in two pieces, by removing the first north-step and the $(ik{+}1)$-st east-step.  Let $\mathfrak{p}_{1}$ and $\mathfrak{p}_{2}$ denote the resulting paths.  Clearly, $\mathfrak{p}_{1}$ is a north-east path from $(0,1)$ to $(ik,i+1)$ that stays weakly above $R(UR^k)^{(i-1)}UR^{(k-1)}$, and $\mathfrak{p}_{2}$ is a north-east path from $(ik+1,i+1)$ to $(N,n+1)$ that stays weakly above $(UR^k)^{n-i}$.  Since $i$ was chosen minimal $\mathfrak{p}_{1}$ does not touch $\mathfrak{b}_{N;k}$ at $jk+1$ for $j<i$, which means that $\mathfrak{p}_{1}$ in fact stays above $(UR^{k})^{i}$.  Thus, $\mathfrak{p}_{1}\in\mathcal{D}_{i}^{(k)}$ and $\mathfrak{p}_{2}\in\mathcal{D}_{n-i}^{(k)}$.  We have just established
  \begin{align*}
    \bigl\lvert\mathcal{P}_{N;k}\bigr\rvert & = \sum_{i=0}^{n}{\bigl\lvert\mathcal{D}_{i}^{(k)}\bigr\rvert\cdot\bigl\lvert\mathcal{D}_{n-i}^{(k)}\bigr\rvert}\\
    & = \sum_{i=0}^{n}{\ran(i,k+1,1)\cdot\ran(n-i,k+1,1)}.
  \end{align*}
  Moreover, it is easily checked that for $n=1$ we have 
  \begin{displaymath}
    \bigl\lvert\mathcal{P}_{k+1;k}\bigr\rvert=2=\ran(1,k+1,2).
  \end{displaymath}
  By \Cref{lem:raney_recursion}, we find that the numbers $\bigl\lvert\mathcal{P}_{N;k}\bigr\rvert$ and $\ran(n,k+1,2)$ satisfy the same recurrence relation with the same initial conditions, and must therefore be equal.
\end{proof}

\begin{figure}
  \centering
  \begin{tikzpicture}\small
    \def\x{.3};
    \draw(3.5,5) node{\begin{tikzpicture}
      \draw[very thick,red](0*\x,0*\x) -- (0*\x,3*\x) -- (8*\x,3*\x) -- (8*\x,5*\x) -- (21*\x,5*\x) -- (21*\x,6*\x) -- (25*\x,6*\x) -- (25*\x,7*\x) -- (31*\x,7*\x);
      \draw[ultra thick,green!80!black](0*\x,0*\x) -- (0*\x,1*\x);
      \draw[ultra thick,green!80!black](20*\x,5*\x) -- (21*\x,5*\x);
      \begin{pgfonlayer}{background}
        \filldraw[fill=gray!50!white,draw=black,thick](0*\x,0*\x) -- (0*\x,1*\x) -- (1*\x,1*\x) -- (1*\x,2*\x) -- (6*\x,2*\x) -- (6*\x,3*\x) -- (11*\x,3*\x) -- (11*\x,4*\x) -- (16*\x,4*\x) -- (16*\x,5*\x) -- (21*\x,5*\x) -- (21*\x,6*\x) -- (26*\x,6*\x) -- (26*\x,7*\x) -- (31*\x,7*\x) -- (31*\x,0*\x) -- cycle;
      \end{pgfonlayer}
    \end{tikzpicture}};
    \draw(1,1) node{\begin{tikzpicture}
      \draw[very thick,red](0*\x,0*\x) -- (0*\x,2*\x) -- (8*\x,2*\x) -- (8*\x,4*\x) -- (20*\x,4*\x);
      \begin{pgfonlayer}{background}
        \filldraw[fill=gray!50!white,draw=black,thick](0*\x,0*\x) -- (1*\x,0*\x) -- (1*\x,1*\x) -- (6*\x,1*\x) -- (6*\x,2*\x) -- (11*\x,2*\x) -- (11*\x,3*\x) -- (16*\x,3*\x) -- (16*\x,4*\x) -- (20*\x,4*\x) -- (20*\x,0*\x) -- cycle;
        \foreach \k in {0,1,2,3}{
          \filldraw[fill=gray!30!white,draw=black,thick]({(0+5*\k)*\x},{(0+\k)*\x}) -- ({(1+5*\k)*\x},{(0+\k)*\x}) -- ({(1+5*\k)*\x},{(1+\k)*\x}) -- ({(0+5*\k)*\x},{(1+\k)*\x}) -- cycle;}
      \end{pgfonlayer}
    \end{tikzpicture}};
    \draw(7,1) node{\begin{tikzpicture}
      \draw(2*\x,4*\x) node{};
      \draw[very thick,red](0*\x,0*\x) -- (0*\x,1*\x) -- (4*\x,1*\x) -- (4*\x,2*\x) -- (10*\x,2*\x);
      \begin{pgfonlayer}{background}
        \filldraw[fill=gray!50!white,draw=black,thick](0*\x,0*\x) -- (0*\x,1*\x) -- (5*\x,1*\x) -- (5*\x,2*\x) -- (10*\x,2*\x) -- (10*\x,0*\x) -- cycle;
      \end{pgfonlayer}
    \end{tikzpicture}};
    \draw[->,very thick](2,3.5) -- (1.5,2);
    \draw[->,very thick](5.5,3.5) -- (6,2);
    \draw(0,6) node{$\mathfrak{p}\in\mathcal{P}_{31;5}$};
    \draw(-1,1.5) node{$\mathfrak{p}_{1}\in\mathcal{D}_{4}^{(5)}$};
    \draw(7.5,1.5) node{$\mathfrak{p}_{2}\in\mathcal{D}_{2}^{(5)}$};
  \end{tikzpicture}
  \caption{Illustration of the decomposition in the proof of \Cref{thm:1modk_nonnesting_number} for $n=6$ and $k=5$.  By construction, the path $\mathfrak{p}_{1}$ never enters the light-gray boxes.}
  \label{fig:1modk_nonnesting_decomposition}
\end{figure}
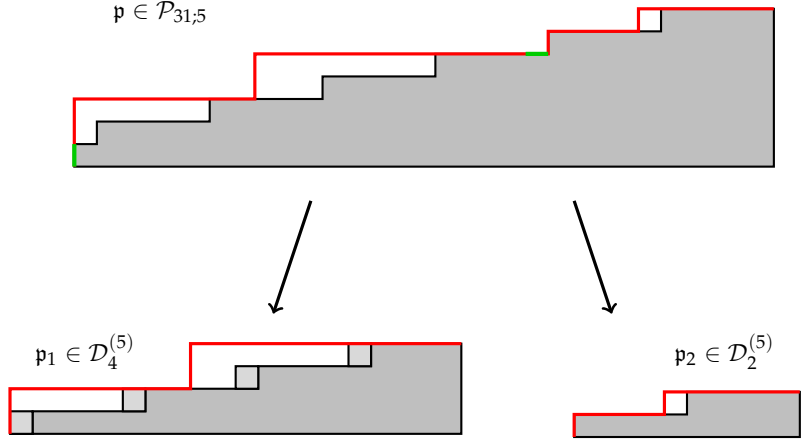

\Cref{fig:1modk_nonnesting_decomposition} illustrates the decomposition from the proof of ~\Cref{thm:1modk_nonnesting_number}.

\begin{corollary}
  For $n>2$ and $k\geq 1$ we have
  \begin{displaymath}
    \ran(n,k+1,2) = \sum_{i=1}^{n-1}{(-1)^{(i+1)}\binom{(n-i)k+2}{i}\ran(n-i,k+1,2)}.
  \end{displaymath}
\end{corollary}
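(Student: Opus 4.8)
The plan is to recognize the stated identity as the vanishing of a signed convolution of Raney numbers, and to prove that vanishing by exploiting the Fu\ss--Catalan functional equation. Throughout I would work with the ordinary generating functions
\[
  B(x)\defs\sum_{m\geq 0}\ran(m,k+1,1)\,x^{m},\qquad A(x)\defs\sum_{m\geq 0}\ran(m,k+1,2)\,x^{m}.
\]
The convolution recurrence established in the proof of \Cref{rem:1modk_bijection} (see also \Cref{thm:1modk_nonnesting_number}) says precisely $A(x)=B(x)^{2}$, and the $(k{+}1)$-ary tree description of \Cref{rem:tree_bijection} gives the functional equation $B(x)=1+x\,B(x)^{k+1}$ (a tree is a leaf, or a root carrying $k{+}1$ ordered subtrees).

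First I would reduce the corollary to a cleaner statement. For $n>2$ one has $\binom{2}{n}=0$, so adjoining the index $i=n$ to the right-hand sum changes nothing; and the index $i=0$ contributes exactly $-\ran(n,k+1,2)$. Hence the corollary is equivalent to the assertion that, for $n>2$,
\[
  S_{n}\defs\sum_{i=0}^{n}(-1)^{i+1}\binom{(n-i)k+2}{i}\ran(n-i,k+1,2)=0 .
\]
I will in fact show $S_{n}=0$ for every $n\geq 1$. Packaging the $S_{n}$ into a generating function in a variable $t$ and substituting $m=n-i$, the inner sum over $i$ becomes $\sum_{i\geq0}(-1)^{i+1}\binom{mk+2}{i}t^{i}=-(1-t)^{mk+2}$, so that
\[
  \sum_{n\geq0}S_{n}\,t^{n}=-\sum_{m\geq0}\ran(m,k+1,2)\,t^{m}(1-t)^{mk+2}=-(1-t)^{2}\,A\!\bigl(t(1-t)^{k}\bigr)=-(1-t)^{2}\,B\!\bigl(t(1-t)^{k}\bigr)^{2}.
\]
All manipulations are legitimate for formal power series, since $t(1-t)^{k}$ has zero constant term.

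The crux is the evaluation $B\!\bigl(t(1-t)^{k}\bigr)=\tfrac{1}{1-t}$. To see this, set $g(t)\defs\tfrac{1}{1-t}$ and check that $g$ obeys the functional equation satisfied by the composite series $h\defs B\!\bigl(t(1-t)^k\bigr)$, namely $h=1+t(1-t)^{k}h^{k+1}$: indeed $1+t(1-t)^{k}(1-t)^{-(k+1)}=1+\tfrac{t}{1-t}=\tfrac{1}{1-t}$. Since $t(1-t)^{k}$ has zero constant term, this equation has a unique formal-power-series solution taking the value $1$ at $t=0$, so $B\!\bigl(t(1-t)^{k}\bigr)=g(t)$. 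Plugging back in yields $\sum_{n}S_{n}t^{n}=-(1-t)^{2}(1-t)^{-2}=-1$, whence $S_{0}=-1$ and $S_{n}=0$ for all $n\geq1$; in particular $S_{n}=0$ for $n>2$, which is the corollary.

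I expect the main obstacle to be spotting and justifying the substitution $x=t(1-t)^{k}$ that collapses $B$ to $\tfrac{1}{1-t}$: once this algebraic collapse is in hand the rest is bookkeeping, but the identity is not transparent from the binomial coefficients $\binom{(n-i)k+2}{i}$ alone. A secondary point to handle with care is the boundary analysis showing that exactly the $i=0$ term survives as $-\ran(n,k+1,2)$ while the $i=n$ term drops out only because $n>2$, which is what pins down the hypothesis of the corollary.
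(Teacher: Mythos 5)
Your proof is correct, and it takes a genuinely different route from the paper. The paper stays inside the lattice-path world of \Cref{thm:1modk_nonnesting_number}: it reinterprets the order ideals of $\Delta_{N;k}$ as north-east paths weakly above $R(U^{k}R)^{n-1}U$, invokes Krattenthaler's determinant formula \cite{krattenthaler15lattice}*{Theorem~10.7.1} to count such paths as $\det M_{n;k}$ with $M_{n;k}=\bigl(\binom{(n-j)k+2}{j-i+1}\bigr)_{1\leq i,j\leq n}$, and then obtains the stated recursion by Laplace expansion of that determinant along its first row together with the evaluation $\det M_{n;k}=\ran(n,k+1,2)$. You instead work purely with formal power series: the convolution identity $A=B^{2}$ (which is just \Cref{lem:raney_recursion} with $r=s=1$), the Fu{\ss}--Catalan equation $B=1+xB^{k+1}$, and the key collapse $B\bigl(t(1-t)^{k}\bigr)=\tfrac{1}{1-t}$, which you justify correctly by uniqueness of the formal solution of $h=1+t(1-t)^{k}h^{k+1}$ (and which one can sanity-check at $k=1$ from the closed form of the Catalan series). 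Your reduction of the corollary to $S_{n}=0$ is also handled correctly, including the boundary bookkeeping: the $i=0$ term supplies $-\ran(n,k+1,2)$, and the $i=n$ term dies precisely because $\binom{2}{n}=0$ for $n>2$, which cleanly explains the hypothesis. What each approach buys: the paper's argument explains the binomial coefficients combinatorially and yields the determinant evaluation $\det M_{n;k}=\ran(n,k+1,2)$ as a statement of independent interest, at the cost of citing external lattice-path machinery; your argument is self-contained algebra, proves the stronger statement $S_{n}=0$ for \emph{all} $n\geq 1$ (so the restriction $n>2$ is exposed as a purely boundary phenomenon), and generalizes readily to other Raney parameters $r$ by replacing $B^{2}$ with $B^{r}$.
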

\begin{proof}
	We have argued in \Cref{thm:1modk_nonnesting_number} that the order ideals of $\Delta_{N;k}$ are in bijection with north-east paths weakly above the boundary path $UR(UR^{k})^{n}$.  If we flip such a path together with the boundary path along the bottom border and rotate it by 90 degrees clockwise, we see that order ideals of $\Delta_{N;k}$ are in bijection with north-east paths weakly above $(U^{k}R)^{n}UR$.  Note that for such a path, the first $k$ steps must be north-steps, and the last step must be an east-step, so that we can forget these steps.  Consequently, the order ideals of $\Delta_{N;k}$ are in bijection with north-east paths weakly above $R(U^{k}R)^{n-1}U$.

	 In view of \cite{krattenthaler15lattice}*{Theorem~10.7.1} the number of such paths is given by the determinant of the matrix
  \begin{displaymath}
    M_{n;k} = \left(\binom{(n-j)k+2}{j-i+1}\right)_{1\leq i,j\leq n}.
  \end{displaymath}

  By Laplace expansion we see that for $n>2$ the determinant of $M_{n,k}$ satisfies the recursion given in the statement, and from~\Cref{thm:1modk_nonnesting_number} we conclude the result.
\end{proof}

\begin{remark}
  The set of $k$-Dyck paths of height $n$ is classically in bijection with the set of $(k{+}1)$-ary trees with $n$ non-leaf vertices. The bijections described in~\Cref{rem:1modk_bijection} and~\Cref{thm:1modk_nonnesting_number} thus extend to a bijection from $\nc_{N;k}$ to $\nn_{N;k}$.
\end{remark}

\section{Open Problems}
  \label{sec:1modk_mdivisible}

\subsection{EL-shellability}
  From a topological point of view, the lattice $\pnc_{N;1}$ of noncrossing partitions is particularly interesting: its order complex is a wedge of Catalan-many spheres.  This was established by Bj{\"o}rner and Edelman~\cite{bjorner80shellable}*{Remark~2} by showing that $\pnc_{N;1}$ admits a particular edge-labeling.  Such an \defn{EL-labeling} induces a shelling of the order complex, from which the mentioned property follows.

  We have attempted to extend this result to $\pnc_{N;k}$, but many natural choices for such a labeling did not have the desired properties.  Nevertheless, we still pose the following conjecture.

\begin{conjecture}
  The poset $\pnc_{N;k}$ admits an EL-labeling.  Consequently, the order complex of $\pnc_{N;k}$ with least and greatest elements removed is homotopic to a wedge of spheres.
\end{conjecture}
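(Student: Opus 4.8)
The plan is to exhibit an explicit edge-labeling of $\pnc_{N;k}$ that generalizes the Bj\"orner--Edelman labeling of $\pnc_{N;1}$ and to verify directly that it is an EL-labeling. By~\Cref{cor:covering_relations}, a covering relation $u\lessdot_k w$ is governed by the $(k{+}1)$-cycle $t\defs u^{-1}w$, which joins $k{+}1$ cycles of $u$ into a single cycle of $w$ (increasing, by~\Cref{rem:increasing_cycles}). Writing the entries of $t$ in increasing order as $i_1<i_2<\cdots<i_{k+1}$, I would assign the label
\begin{displaymath}
  \lambda(u\lessdot_k w)\defs(i_2,i_3,\ldots,i_{k+1}),
\end{displaymath}
and totally order labels lexicographically. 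For $k=1$ this records the larger endpoint of the joining transposition and recovers the classical labeling, so the construction is at least consistent with the known case.

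The first structural step would be to show that every interval of $\pnc_{N;k}$ decomposes as a direct product of smaller $k$-indivisible noncrossing partition posets. Since~\Cref{cor:induced_subposet} identifies $\pnc_{N;k}$ with an induced subposet of $\pnc_{N;1}$, and the order agrees with inclusion of cycle supports, for $u\leq_k w$ the interval $[u,w]$ should be isomorphic to a product $\prod_j\pnc_{N_j;k}$ indexed by the cycles of the relative element $u^{-1}w$, each factor being the $k$-indivisible poset on the corresponding support; conjugation-invariance of $\ellk$ lets one normalize each factor to a standard $\pnc_{N_j;k}$. Granting this, EL-shellability reduces to producing the increasing maximal chain inside top intervals of the form $[\id,c_{N'}]$ and propagating it recursively, which is exactly the format of a recursive atom ordering in the sense of Bj\"orner--Wachs.

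The heart of the argument, and where I expect the real difficulty to lie, is verifying that each interval $[u,w]$ contains a \emph{unique} maximal chain with strictly increasing label sequence and that this chain is lexicographically least. In the lattice $\pnc_{N;1}$ this uniqueness is forced because merging is binary and intervals are lattices; here a single cover merges $k{+}1$ cycles at once, so an interval has many atoms and the increasing condition must canonically single out one of them. Because $\pnc_{N;k}$ is \emph{not} a lattice, the usual semimodular and lattice-theoretic machinery does not apply, and—as noted above—several natural label orders fail precisely at this uniqueness step. My approach would be to force the first edge of the increasing chain to join the $k{+}1$ cycles whose minima $m_0<m_1<\cdots<m_k$ are lexicographically smallest, refining the order on $\lambda$ by these sub-cycle minima should the bare tuple $(i_2,\ldots,i_{k+1})$ prove too coarse; one then argues inductively, using the product decomposition of the remaining interval, that the choice at each level is uniquely determined and globally lex-minimal. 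Establishing such a stable, canonical first-atom rule compatibly with the recursive product structure is the crux.

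Finally, the homotopy conclusion is automatic once the EL-labeling is in hand. The poset $\pnc_{N;k}$ is graded of rank $n$: by~\Cref{cor:covering_relations} each cover increases $\ellk$ by exactly $1$, and $\ellk(c_N)=n$, so $\ellk$ is a rank function and the order complex of the proper part is pure of dimension $n-2$. An EL-labeling induces a shelling, so by Bj\"orner--Wachs the proper part is homotopy equivalent to a wedge of $(n{-}2)$-spheres indexed by the falling maximal chains. As a consistency check, their number must equal $\lvert\mu(\pnc_{N;k})\rvert=\ran(n,2k,1)$ from~\Cref{cor:1modk_mobius}; matching the falling chains of the proposed labeling against this count would provide strong corroboration of the construction.
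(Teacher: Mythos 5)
There is a fundamental mismatch here: the statement you are trying to prove is posed in the paper as an open \emph{conjecture}, not a theorem. The authors explicitly report that they attempted to extend Bj{\"o}rner--Edelman's EL-labeling to $\pnc_{N;k}$ and that ``many natural choices for such a labeling did not have the desired properties.'' Your proposal is exactly such a natural choice (label a cover $u\lessdot_k w$ by the increasing tuple of the joining $(k{+}1)$-cycle with its minimum dropped, ordered lexicographically), and you never verify the property that makes a labeling an EL-labeling. You yourself flag the crux---that every interval must contain a \emph{unique} increasing maximal chain, and that it must be lexicographically first---and then describe only how you ``would'' argue it. That step is precisely where the authors say natural labelings break down, so leaving it as a plan rather than a verification means the proposal does not establish the statement; it restates the open problem with a candidate labeling attached.

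Two further gaps in the supporting structure deserve mention. First, the interval decomposition $[u,w]\cong\prod_j \pnc_{N_j;k}$ is asserted (``should be isomorphic''), not proved; in the classical case $k=1$ this uses lattice-theoretic facts about absolute order, and since $\pnc_{N;k}$ is not a lattice you would need to re-derive it from \Cref{cor:covering_relations} and \Cref{cor:induced_subposet} directly---plausible, but not automatic. Second, your proposed first-atom rule (join the $k{+}1$ cycles with lexicographically smallest minima) need not even produce an element of the interval: joining an arbitrary choice of $k{+}1$ cycles of $u$ lying below $w$ can create crossings or violate the $1\bmod k$ gap condition of \Cref{thm:k_indivisible_equivalent}(iii), so the rule requires a compatibility argument that is absent. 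The final paragraph of your proposal (gradedness, Bj{\"o}rner--Wachs shelling, falling chains counted by $\lvert\mu(\pnc_{N;k})\rvert=\ran(n,2k,1)$ from \Cref{cor:1modk_mobius}) is correct conditional on an EL-labeling existing, but that conditional is the entire content of the conjecture.
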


\subsection{Other types}
We give some conjectures for extending the combinatorics of this article to type $B$.  Fix simple reflections $s_0,s_1,\ldots,s_{kn-1}$ in the hyperoctahedral group of type $B_{kn}$ with $(s_0s_1)^4=1$. Analogously to the symmetric group, we group the transpositions of the factorization $c=s_0 s_1 \cdots s_{kn-1}$ of the linear Coxeter element as 
\[\mathbf{t}=(s_0\cdots s_{k-1})\cdot (s_{k}\cdots s_{2k-1})\cdots(s_{kn-k}\cdots s_{kn-1}).\]

\begin{conjecture}
The Hurwitz orbit of $\mathbf{t}$ contains $k^{n-1} n^n$ elements.
\end{conjecture}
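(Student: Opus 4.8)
The plan is to realize the statement as a \emph{transitivity-plus-enumeration} result, mirroring the type~$A$ development in \Cref{thm:snaction,thm:parking_functions}. First I would record the reflection-group structure of $\mathbf t=(t_1,\dots,t_n)$. The factored Coxeter element $c=s_0s_1\cdots s_{kn-1}$ has absolute reflection length $kn$, each block $t_i=s_{k(i-1)}\cdots s_{ki-1}$ has absolute length $k$, and $\sum_i\ell(t_i)=kn=\ell(c)$, so $\mathbf t$ is a length-additive (reduced) factorization. Its blocks fall into exactly two conjugacy classes: $t_1=s_0\cdots s_{k-1}$ is the Coxeter element of the parabolic of type $B_k$ (a negative $k$-cycle), while $t_2,\dots,t_n$ are positive $(k{+}1)$-cycles lying in a single class. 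Since a Hurwitz move conjugates blocks and preserves their product, every element of the orbit is again a reduced factorization of $c$ with the same multiset of block-classes; call these the \emph{admissible} factorizations. The proof then splits into showing (I) the Hurwitz action is transitive on admissible factorizations, so the orbit is all of them, and (II) there are exactly $k^{n-1}n^n$ of them.

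For (I) I would argue by induction on $n$, as in the proof of \Cref{thm:snaction} and \cite{muehle18poset}*{Proposition~6.2}. Given an admissible factorization, use Hurwitz moves to bring a positive block into the last position; its complement $c\,t_n^{-1}=t_1\cdots t_{n-1}$ has absolute length $k(n{-}1)$ and is, up to conjugacy, the Coxeter element of a parabolic of type $B_{k(n-1)}$ carrying the unique negative block together with $n{-}2$ positive blocks. Transitivity on this smaller factor follows by induction (the base case $n=1$ is the single block $t_1=c$), and the freedom in the choice of the peeled positive block is absorbed by the braid moves, yielding a single orbit. The delicate point is that, unlike in type~$A$, the blocks are not all conjugate, so one must check that the unique negative block can always be routed into the parabolic $B_{k(n-1)}$; I expect this to be the more technical but tractable half.

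For (II) I would pass to signed permutations via $B_{kn}\hookrightarrow\Symmetric_{2kn}$ on $\{\pm1,\dots,\pm kn\}$. Under this embedding $c$ becomes a single $2kn$-cycle $\hat c$, the negative block $t_1$ becomes one $2k$-cycle, and each positive block $t_i$ with $i\geq2$ becomes a centrally symmetric pair of $(k{+}1)$-cycles; the lengths sum to $(2k-1)+(n-1)\,2k=2kn-1=\ell_1(\hat c)$, so $\hat{\mathbf t}$ is a reduced factorization of $\hat c$ that commutes with the central involution $\epsilon\colon i\mapsto -i$. Thus admissible type~$B$ factorizations correspond to $\epsilon$-symmetric reduced factorizations of $\hat c$, which can be enumerated by a symmetric refinement of the Goulden--Jackson cactus/tree bijection of \Cref{rem:1modk_bijection}, or by a type~$B$ analogue of Krattenthaler's decomposition numbers \cite{krattenthaler10decomposition} summed via the Raney convolution \Cref{lem:raney_recursion}. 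Either route should produce the closed form $k^{n-1}n^n$; for $k=1$ this specializes to the classical count $n^n$ of reduced reflection factorizations of the $B_n$-Coxeter element, which is consistent with the known transitivity of the Hurwitz action on such factorizations.

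The main obstacle is the counting step~(II): the value $k^{n-1}n^n$ does not arise from a naive one-block peeling, since the ratio $k^{n-1}n^n/\bigl(k^{n-2}(n-1)^{n-1}\bigr)=k\,n^n/(n-1)^{n-1}$ is not an integer, so a genuine convolution/decomposition identity in type~$B$ is needed and the central-symmetry constraint on the factorizations of $\hat c$ must be tracked throughout the bijection. A clean alternative, closer to the paper's method, would be to construct a \emph{type~$B$ $k$-parking function} model together with an $\Symmetric_n$-equivariant bijection from the orbit, as in \Cref{lem:snaction,thm:parking_functions}; proving that this model has cardinality $k^{n-1}n^n$ would then replace the symmetric cactus count.
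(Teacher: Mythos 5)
First, be aware that this statement is not a theorem of the paper: it appears in the closing section as an open problem, stated as a conjecture with no proof attached, so the only question is whether your proposal itself closes the problem---and it does not. Your framing is sensible and your structural claims are correct: the blocks of $\mathbf{t}$ form a reduced (length-additive) factorization of the type $B_{kn}$ Coxeter element, they lie in exactly two conjugacy classes (one negative $k$-cycle and $n-1$ positive $(k{+}1)$-cycles), Hurwitz moves preserve both the product and the multiset of classes, and for $k=1$ the conjectured count specializes to the classical $n^n$. But the proposal reduces the conjecture to two assertions, (I) transitivity of the Hurwitz action on the set of class-admissible reduced factorizations, and (II) the count of that set being $k^{n-1}n^n$, and it proves neither; your own hedges (``I expect this to be tractable,'' ``should produce the closed form'') mark exactly where the mathematics is missing.

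Concretely: for (I), no theorem you could cite applies. The type $A$ transitivity in \Cref{thm:snaction} (via \cite{lando04graphs}) and the $3$-cycle case in \cite{muehle18poset} concern factors forming a single conjugacy class of cycles in $\Symmetric_N$, while the known type $B$ results (Deligne/Bessis transitivity, and the Lewis--Reiner classification of Hurwitz orbits by multisets of conjugacy classes) concern factorizations into \emph{reflections}, which your blocks are not once $k\geq 2$. Your induction stalls precisely at the point you flag: after peeling a positive block, the complement is a parabolic Coxeter element (Brady--Watt), but it need not a priori be of type $B_{k(n-1)}$, and the ``absorption'' of the choice of peeled block into braid moves is the whole difficulty of such proofs, not a remark. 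For (II), saying that an $\epsilon$-symmetric refinement of the Goulden--Jackson bijection or a type $B$ analogue of \cite{krattenthaler10decomposition} ``should'' yield $k^{n-1}n^n$ is a restatement of the problem rather than a proof; no such enumeration is derived or available to cite, and your own observation that the one-block peeling ratio $k\,n^n/(n-1)^{n-1}$ fails integrality shows the required identity cannot be a routine convolution. The plan is coherent---and consistent with small cases; for instance, when $k=n=2$ the orbit in $B_4$ does have $2\cdot 4=8$ elements---but both of its pillars remain unproven, so the conjecture stays open.
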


We can take elements that occur as prefixes of the factorizations in the Hurwitz orbit of $\mathbf{t}$ to form the type $B_n$ $k$-indivisible noncrossing partitions.

\begin{conjecture}
There are $2 \binom{nk+n-1}{n-1}$ type $B_n$ $k$-indivisible noncrossing partitions. The zeta function of the restriction of the absolute order to those elements is $ q\binom{nk(q-1)+n-1}{n-1}$. 
\end{conjecture}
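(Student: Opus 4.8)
The plan is to reproduce, in type $B$, the entire chain of reductions that carried us from \Cref{thm:1modk_rank_enumeration} to \Cref{thm:1modk_zeta}, since the two claimed formulas are again linked by specialization. Writing $\ZetaPol^{B}$ for the claimed zeta function, the number of $1$-multichains is $\ZetaPol^{B}(2)=2\binom{nk+n-1}{n-1}$, so the cardinality count is exactly the value of the zeta function at $2$ and need not be proved separately; the whole conjecture follows once the zeta function is identified. First I would set up the type $B$ analogues of \Cref{sec:preliminaries,sec:k_indivisible_noncrossing}: the subgroup of $B_{kn}$ generated by the conjugacy class of the distinguished factor $s_0\cdots s_{k-1}$ (which, as in the $\GS_{K;k}$ case, is either the whole group or an index-two subgroup according to the parity of $k$), the $k$-absolute order defined by additivity of $\ellk$, and the identification of its lower interval below the linear Coxeter element $c$. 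The essential lemma to establish is the type $B$ version of \Cref{lem:good_cycles_length}: a signed permutation should be declared $1\bmod k$ when its paired cycles and its unique balanced (negative) cycle satisfy the appropriate congruences, and for such $w$ one wants $\ellk(w)=\ello^{B}(w)/k$, where $\ello^{B}$ is type $B$ reflection length. Granting this, \Cref{cor:covering_relations,cor:lower_ideal,lem:multichain_factorization} transfer essentially verbatim, so $q$-multichains correspond to factorizations $v_1\cdots v_{q+1}=c$ into $1\bmod k$ elements with prescribed ranks $r_i=\ellk(v_i)$.

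The technical heart is a type $B$ analogue of \Cref{thm:1modk_rank_enumeration}, and reverse-engineering the target reveals exactly what must be proved: the number of $q$-multichains with rank jump vector $(r_1,\ldots,r_{q+1})$ should be $\frac{1}{k}\prod_{i=1}^{q+1}\ran(r_i,1-k,nk)$. This is precisely the type $A$ formula of \Cref{thm:1modk_rank_enumeration} with $N=kn+1$ replaced by $nk=N-1$ and the cyclic normalization $\frac1N$ replaced by $\frac1k$; one checks directly that it specializes correctly at $k=1$ (where it reduces to $\prod_i\binom{n}{r_i}$, summing by Vandermonde). I would obtain this count by the \emph{doubling} philosophy, realizing the type $B$ interval below $c$ as centrally symmetric type $A$ $k$-indivisible noncrossing partitions and using the balanced cycle as a distinguished basepoint that removes the cyclic-symmetry factor: concretely this means isolating the factor carrying the balanced cycle (as in the $m$-divisible reduction of \Cref{thm:1modk_rank_enumeration_mdivisible}) and reducing the remaining paired factors to the type $A$ enumeration already supplied by \cite{krattenthaler10decomposition}, the symmetry of the final product reflecting that the balanced cycle may be distributed over any factor. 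Alternatively one can cite a type $B$ transitive-factorization count with prescribed cycle type directly and match parameters.

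This is where I expect the real difficulty to lie, both because the type $B$ cycle combinatorics (paired versus balanced cycles) makes the bookkeeping of $\ello^{B}$ and the congruence conditions delicate, and because Hurwitz transitivity on factorizations into $k$-fold products of consecutive reflections must be secured in type $B$ (the analogue of \Cref{thm:snaction}) before the Hurwitz-orbit definition of the objects can be identified with the order interval. Both the length formula and the factorization count are genuinely type-$B$ inputs that do not follow formally from the type $A$ results, and the main work of the argument sits here.

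Finally, with the per-rank-jump count in hand, I would sum over all rank jump vectors using the Raney convolution $\sum_{r_1+\cdots+r_{q+1}=n}\prod_i\ran(r_i,1-k,nk)=\ran\bigl(n,1-k,(q+1)nk\bigr)$, the identity of \cite{muehle18poset}*{Lemma~5.5} already invoked in \Cref{thm:1modk_zeta}. This gives $\ZetaPol^{B}(q+1)=\frac1k\ran\bigl(n,1-k,(q+1)nk\bigr)=(q+1)\binom{nkq+n-1}{n-1}$, where the passage from $\ran$ to the plain binomial is the same elementary manipulation as in the proof of \Cref{thm:1modk_zeta}. Specializing to the case $q=1$ (equivalently, evaluating the zeta function at $2$) then yields the cardinality $2\binom{nk+n-1}{n-1}$, completing both assertions.
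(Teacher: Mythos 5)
You should first be aware that the paper does not prove this statement at all: it appears in the final section as an open conjecture, so there is no proof of record to compare your argument against, and the only question is whether your proposal itself closes it. It does not. What you have written is a roadmap whose load-bearing steps are exactly the ones you leave unproven: (a) the type $B$ analogue of \Cref{lem:good_cycles_length}, i.e.\ a formula $\ellk(w)=\ell_{1}^{B}(w)/k$ on a suitable class of signed permutations, with the correct bookkeeping of paired versus balanced cycles; (b) the identification of the prefix set of the Hurwitz orbit of $\mathbf{t}$ with an interval in the resulting $k$-absolute order, which requires the type $B$ analogue of \Cref{thm:snaction} (note also that the factors of $\mathbf{t}$ do not form a single conjugacy class---the factor containing $s_0$ is a balanced $k$-cycle while the others are paired cycles---so ``the subgroup generated by the conjugacy class of the distinguished factor'' is not quite the right setup); and (c) the per-rank-jump count $\frac{1}{k}\prod_{i}\ran(r_i,1-k,nk)$, which you obtain by reverse-engineering from the desired answer rather than by proof. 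In type $A$ the corresponding count (\Cref{thm:1modk_rank_enumeration}) rests on the nontrivial factorization enumeration of \cite{krattenthaler10decomposition}; you gesture at citing ``a type $B$ transitive-factorization count'' but supply no such result, and even if one were quoted (the type $B$ decomposition numbers of \cite{krattenthaler10decomposition} are a natural candidate), it could not be applied before (a) and (b) are settled, since the conjecture's objects are defined through the Hurwitz orbit and not intrinsically as an order interval.

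To your credit, the scaffolding around those gaps is sound and internally consistent: the observation that the cardinality claim is the $q=1$ specialization of the zeta claim is correct; your candidate per-rank-jump formula does sum, via the Raney convolution of \cite{muehle18poset}*{Lemma~5.5}, to $\frac{1}{k}\ran\bigl(n,1-k,(q+1)nk\bigr)=(q+1)\binom{nkq+n-1}{n-1}$, which matches the conjectured zeta function; and at $k=1$ it specializes to $\binom{(q+1)n}{n}$, the known zeta polynomial of the type $B$ noncrossing partition lattice, with the per-rank counts reducing to the type $B$ Narayana-type products $\prod_i\binom{n}{r_i}$. So your proposal is best read as a correct reduction of the conjecture to three precisely stated type $B$ lemmas. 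But since those lemmas constitute the entire mathematical content---as you yourself acknowledge when you say the ``main work of the argument sits here''---the conjecture remains open under your proposal.
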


\section*{Acknowledgements}
We thank an anonymous reviewer for a careful reading of the manuscript and many insightful comments and suggestions.  We thank Christian Krattenthaler for providing a proof of \Cref{thm:1modk_rank_enumeration_mdivisible}, and suggesting to include it in this article.  N.W. thanks Jon McCammond for pointing him to~\cite{mccammond2017noncrossing}, Louis-Fran\c{c}ois Pr\'eville-Ratelle and Guillaume Chapuy for helpful conversations, and Christian Stump for providing TikZ code to draw noncrossing partitions. 

N.W. was partially supported by a Simons collaboration grant.

\begin{bibdiv}
\begin{biblist}

\bib{armstrong09generalized}{article}{
      author={Armstrong, Drew},
       title={Generalized noncrossing partitions and combinatorics of {C}oxeter
  groups},
        date={2009},
     journal={Memoirs of the American Mathematical Society},
      volume={202},
}

\bib{armstrong09euler}{article}{
      author={Armstrong, Drew},
      author={Krattenthaler, Christian},
       title={Euler characteristic of the truncated order complex of
  generalized noncrossing partitions},
        date={2009},
     journal={The Electronic Journal of Combinatorics},
      volume={16},
}

\bib{baumeister19non}{collection}{
      author={Baumeister, Barbara},
      author={Bux, Kai-Uwe},
      author={G{\"o}tze, Friedrich},
      author={Kielak, Dawid},
      author={Krause, Henning},
       title={Non-crossing partitions},
        date={2019},
   series={{Spectral Structures and Topological Methods in Mathematics}},
      editor={Baake, Michael},
      editor={G{\"o}tze, Friedrich},
      editor={Hoffmann, Werner},
   publisher={European Mathematical Society},
       pages={235\ndash 274},
}

\bib{biane96minimal}{article}{
      author={Biane, Philippe},
       title={Minimal factorizations of a cycle and central multiplicative
  functions on the infinite symmetric group},
        date={1996},
     journal={Journal of Combinatorial Theory (Series A)},
      volume={76},
       pages={197\ndash 212},
}

\bib{biane97some}{article}{
      author={Biane, Philippe},
       title={Some properties of crossings and partitions},
        date={1997},
     journal={Discrete Mathematics},
      volume={175},
       pages={41\ndash 53},
}

\bib{biane02parking}{article}{
      author={Biane, Philippe},
       title={Parking functions of types $A$ and $B$},
        date={2002},
     journal={The Electronic Journal of Combinatorics},
      volume={9},
}

\bib{bizley54derivation}{article}{
      author={Bizley, Michael T.~L.},
       title={Derivation of a new formula for the number of minimal lattice
  paths from $(0,0)$ to $(km,kn)$ having just $t$ contacts with the line
  $my=nx$ and having no points above this line; and a proof of {G}rossman's
  formula for the number of paths which may touch but do not rise above this
  line},
        date={1954},
     journal={Journal for the Institute of Actuaries},
      volume={80},
       pages={55\ndash 62},
}

\bib{bjorner80shellable}{article}{
      author={Bj{\"o}rner, Anders},
       title={Shellable and {C}ohen-{M}acaulay partially ordered sets},
        date={1980},
     journal={Transactions of the American Mathematical Society},
      volume={260},
       pages={159\ndash 183},
}

\bib{calderbank1986partitions}{article}{
      author={Calderbank, A.~Robert},
      author={Hanlon, Philipp~J.},
      author={Robinson, Robert~W.},
       title={Partitions into even and odd block size and some unusual
  characters of the symmetric groups},
        date={1986},
     journal={Proceedings of the London Mathematical Society},
      volume={3},
      number={2},
       pages={288\ndash 320},
}

\bib{edelman80chain}{article}{
      author={Edelman, Paul~H.},
       title={Chain enumeration and non-crossing partitions},
        date={1980},
     journal={Discrete Mathematics},
      volume={31},
       pages={171\ndash 180},
}

\bib{eidswick1989short}{article}{
      author={Eidswick, Jennifer~A.},
       title={Short factorizations of permutations into transpositions},
        date={1989},
     journal={Discrete Mathematics},
      volume={73},
      number={3},
       pages={239\ndash 243},
}

\bib{freeze16combinatorial}{thesis}{
      author={Freeze, Mike},
       title={Combinatorial descriptions of the $m$-{C}ambrian lattices},
        type={Master's Thesis},
 institution={The University of New Brunswick},
        date={2016},
}

\bib{goulden92combinatorial}{article}{
      author={Goulden, Ian~P.},
      author={Jackson, David~M.},
       title={The combinatorial relationship between trees, cacti and certain
  connection coefficients for the symmetric group},
        date={1992},
     journal={European Journal of Combinatorics},
      volume={13},
       pages={357\ndash 365},
}

\bib{goulden1994symmetrical}{article}{
      author={Goulden, Ian~P.},
      author={Jackson, David~M.},
       title={Symmetrical functions and {M}acdonald's result for top connexion
  coefficients in the symmetrical group},
        date={1994},
     journal={Journal of Algebra},
      volume={166},
      number={2},
       pages={364\ndash 378},
}

\bib{graham94concrete}{book}{
      author={Graham, Ronald~L.},
      author={Knuth, Donald~E.},
      author={Patashnik, Oren},
       title={{C}oncrete {M}athematics},
     edition={2},
   publisher={Addison-Wesley},
        date={1994},
}

\bib{herzog76representation}{article}{
      author={Herzog, Marcel},
      author={Reid, Kenneth~B.},
       title={Representation of permutations as products of cycles of fixed
  length},
        date={1976},
     journal={Journal of the Australian Mathematical Society (Series A)},
      volume={22},
       pages={321\ndash 331},
}

\bib{irving2019trees}{article}{
      author={Irving, John},
      author={Rattan, Amarpreet},
       title={Trees, parking functions and factorizations of full cycles},
        date={2019},
      eprint={arXiv:1907.10123},
}

\bib{irving2016parking}{inproceedings}{
      author={Irving, John},
      author={Rattan, Amarpreet},
       title={Parking functions, tree depth and factorizations of the full cycle into transpositions},
        date={2016},
   booktitle={{Proceedings of the 28th International Conference on Formal
  Power Series and Algebraic Combinatorics (FPSAC), Simon Fraser University, Vancouver}},
       pages={647\ndash 658},
}

\bib{krattenthaler15lattice}{incollection}{
      author={Krattenthaler, Christian},
       title={Lattice {P}ath {E}numeration},
        date={2015},
      series={Handbook of {E}numerative {C}ombinatorics},
      editor={B{\'o}na, Mikl{\'o}s},
      volume={87},
   publisher={CRC Press},
     address={Boca Raton-London-New York},
       pages={589\ndash 678},
}

\bib{krattenthaler10decomposition}{article}{
      author={Krattenthaler, Christian},
      author={M\"uller, Thomas~W.},
       title={Decomposition numbers for finite coxeter groups and generalised
  non-crossing partitions},
        date={2010},
     journal={Transactions of the American Mathematical Society},
      volume={362},
       pages={2723\ndash 2787},
}

\bib{lando04graphs}{book}{
      author={Lando, Sergei~K.},
      author={Zvonkin, Alexander~K.},
       title={Graphs on {S}urfaces and their {A}pplications},
   publisher={Springer},
     address={Berlin},
        date={2004},
      volume={141},
}

\bib{longyear1989peculiar}{article}{
      author={Longyear, Judith~Q.},
       title={A peculiar partition formula},
        date={1989},
     journal={Discrete Mathematics},
      volume={78},
      number={1-2},
       pages={115\ndash 118},
}

\bib{mccammond06noncrossing}{article}{
      author={McCammond, Jon},
       title={Noncrossing partitions in surprising locations},
        date={2006},
     journal={American Mathematical Monthly},
      volume={113},
       pages={598\ndash 610},
}

\bib{mccammond2017noncrossing}{article}{
      author={McCammond, Jon},
       title={Noncrossing hypertrees},
        date={2017},
      eprint={arXiv:1707.06634},
}

\bib{muehle18poset}{article}{
      author={M{\"u}hle, Henri},
      author={Nadeau, Philippe},
       title={A poset structure on the alternating group generated by
  $3$-cycles},
        date={2018},
     journal={Algebraic Combinatorics},
      volume={2},
      number={6},
       pages={1285\ndash 1310}
}

\bib{sagan1986shellability}{article}{
      author={Sagan, Bruce~E.},
       title={Shellability of exponential structures},
        date={1986},
     journal={Order},
      volume={3},
      number={1},
       pages={47\ndash 54},
}

\bib{simion00noncrossing}{article}{
      author={Simion, Rodica},
       title={Noncrossing partitions},
        date={2000},
     journal={Discrete Mathematics},
      volume={217},
       pages={397\ndash 409},
}

\bib{springer1996factorizations}{inproceedings}{
      author={Springer, Colin},
       title={Factorizations, trees, and cacti},
        date={1996},
   booktitle={{Proceedings of the Eighth International Conference on Formal
  Power Series and Algebraic Combinatorics (FPSAC), University of Minnesota}},
      volume={23},
       pages={427\ndash 438},
}

\bib{stanley1978exponential}{article}{
      author={Stanley, Richard~P.},
       title={Exponential structures},
        date={1978},
     journal={Studies in Applied Mathematics},
      volume={59},
      number={1},
       pages={73\ndash 82},
}

\bib{stanley97parking}{article}{
      author={Stanley, Richard~P.},
       title={Parking functions and noncrossing partitions},
        date={1997},
     journal={The Electronic Journal of Combinatorics},
      volume={4},
}

\bib{stanley11enumerative_vol1}{book}{
      author={Stanley, Richard~P.},
       title={Enumerative {C}ombinatorics, Vol. 1},
     edition={2},
   publisher={Cambridge University Press},
     address={Cambridge},
        date={2011},
}

\bib{stump18cataland}{article}{
      author={Stump, Christian},
      author={Thomas, Hugh},
      author={Williams, Nathan},
       title={Cataland: Why the {F}uss?},
        date={2018},
      eprint={arXiv:1503.00710},
}

\bib{sylvester1976continuous}{thesis}{
      author={Sylvester, Garrett~S.},
       title={Continuous-spin ising ferromagnets},
        type={Ph.D. Thesis},
 institution={Massachusetts Institute of Technology},
        date={1976},
}

\bib{fuss91solutio}{article}{
      author={von Fu{\ss}, Nikolaus},
       title={Solutio quaestionis quot modis polygonum $n$ laterum in polygona
  $m$ laterum per diagonales resolvi queat},
        date={1791},
     journal={Nova Acta Academiae Scientiarum Imperialis Petropolitanae},
      volume={IX},
       pages={243\ndash 251},
}

\bib{wachs07poset}{incollection}{
      author={Wachs, Michelle~L.},
       title={Poset {T}opology: {T}ools and {A}pplications},
        date={2007},
      series={{Geometric Combinatorics}},
      editor={Miller, Ezra},
      editor={Reiner, Victor},
      editor={Sturmfels, Bernd},
      volume={13},
   publisher={American Mathematical Society},
     address={Providence, RI},
       pages={497\ndash 615},
}

\bib{yan01generalized}{article}{
      author={Yan, Catherine~H.},
       title={Generalized parking functions, tree inversions, and multicolored
  graphs},
        date={2001},
     journal={Advances in Applied Mathematics},
      volume={27},
       pages={641\ndash 670},
}

\end{biblist}
\end{bibdiv}

\end{document}